\theoremstyle{plain} 
\newtheorem{theorem}{Theorem} 
\numberwithin{theorem}{section}
\newtheorem*{theorem*}{Theorem}
\newtheorem{prop}[theorem]{Proposition}
\newtheorem{prop-def}[theorem]{Proposition-Definition}
\newtheorem{lemma}[theorem]{Lemma}
\newtheorem{coro}[theorem]{Corollary}
\newtheorem{definition}[theorem]{Definition}
\theoremstyle{definition}
\newtheorem{example}{Example}
\theoremstyle{remark} 
\newtheorem{remark}[theorem]{Remark}
\theoremstyle{definition}
\newcommand{\pa}{\partial}
\newcommand{\wt}{\widetilde}
\newcommand{\RM}{\backslash}
\newcommand{\be}{\begin{equation} }
\newcommand{\ee}{\end{equation} }
\newcommand{\bbC}{\mathbb{C}}
\newcommand{\C}{\mathbb{C}} 
\newcommand{\D}{\mathbb{D}}
\newcommand{\N}{\mathbb{N}}
\newcommand{\Q}{\mathbb{Q}}
\newcommand{\Z}{\mathbb{Z}}
\newcommand{\cH}{\mathcal{H}}
\newcommand{\cL}{\mathcal{L}}
\newcommand{\cM}{\mathcal{M}}
\newcommand{\cN}{\mathcal{N}}
\newcommand{\cO}{\mathscr{O}}
\newcommand{\cQ}{\mathcal{Q}}
\newcommand{\cS}{\mathcal{S}}
\newcommand{\cT}{\mathcal{T}}
\newcommand{\cX}{\mathcal{X}}
\newcommand{\cY}{\mathcal{Y}}
\newcommand{\cZ}{\mathcal{Z}}
\newcommand{\bee}{\mathbf{e}}
\newcommand{\bL}{\mathbf{L}}
\newcommand{\bl}{\mathbf{l}}
\newcommand{\bR}{\mathbf{R}}
\newcommand{\bs}{\mathbf{s}}
\newcommand{\gr}{\textup{gr}}
\newcommand{\grf}{\gr^F_\bullet}
\newcommand{\sL}{\mathscr{L}}
\newcommand{\sM}{\mathscr{M}}
\newcommand{\sO}{\mathscr{O}}
\newcommand{\sI}{\mathscr{I}}
\newcommand{\shTA}{\mathscr{T}}
\newcommand{\shD}{\mathscr{D}}
\newcommand{\sN}{\mathscr{N}}
\newcommand{\shM}{\mathscr{M}}
\newcommand{\Spec}{\textup{Spec }}
\newcommand{\DR}{\textup{DR}}
\newcommand{\Mod}{\textup{Mod}}
\newcommand{\Coker}{\textup{Coker}}
\newcommand{\Ker}{\textup{Ker}}
\newcommand{\End}{\textup{End}}
\newcommand{\Kos}{\textup{Kos}}
\newcommand{\supp}{\textup{supp}}
\newcommand{\ba}{{\bf a}}
\newcommand{\bk}{{\bf k}}
\newcommand{\bff}{{\bf f}}
\newcommand{\rel}{{\textup{rel}}}
\newcommand{\Ann}{{\textup{Ann}}}
\newcommand{\Ch}{\textup{Ch}}
\newcommand{\CC}{\textup{CC}}
\title{Characteristic cycles associated to holonomic $\shD$-modules}
\author{Lei Wu}
\address{Lei Wu, Department of Mathematics, KU Leuven, Celestijnenlaan 200B, B-3001 Leuven, Belgium}
\email{lei.wu@kuleuven.be}
\begin{document}

\subjclass[2010]{14F10, 32S60, 14C17, 32S30,	14A21}

\begin{abstract}
We study relative and logarithmic characteristic cycles associated to holonomic $\shD$-modules. 
As applications, we obtain:
(1) an alternative proof of Ginsburg's log characteristic cycle formula for lattices of regular holonomic $\shD$-modules following ideas of Sabbah and Briancon-Maisonobe-Merle,
and (2) the constructibility of the log de Rham complexes for lattices of holonomic $\shD$-modules, which is a natural generalization of Kashiwara's constructibility theorem. 

\end{abstract}
\maketitle

\section{Introduction}
The characteristic variety of a coherent $\shD$-module with a good filtration is the support of the associated graded module on the cotangent bundle (see \cite{Kasbook}). Characteristic cycles can be obtained with multiplicities taken into account. They can also be considered relative to smooth morphisms (or holomorphic submersions under the analytic setting) and from a logarithmic perspective.
See \S\ref{subsec:relchcc} and \S\ref{sec:logdmodule} for definitions.  

In this paper, we study characteristic cycles of relative $\shD$-modules associated to (regular) holonomic $\shD$-modules. We apply the relative characteristic cycles to studying the logarithmic characteristic cycles for lattices of regular holonomic $\shD$-modules and the constructibility of logarithmic de Rham complexes.

\subsection{Constructibility of log de Rham complexes for lattices}
For holonomic $\shD$-modules on complex manifolds, Kashiwara's constructibility theorem \cite{Kascons} says that the de Rham complexes of the holonomic modules are constructible (they are indeed perverse, see also \cite[Theorem 4.6.6]{HTT}). Our first two main results are constructibility and perversity of log de Rham complexes of lattices.  

Suppose that $(X,D)$ is a pair consisting of a complex manifold together with a reduced normal crossing divisor $D$, called an \emph{analytic smooth log pair}.
Let $\shD_{X,D}$ be the sheaf of rings of holomorphic logarithmic differential operators, that is, the sub-sheaf of $\shD_X$ consisting of differential operators preserving the defining ideal of $D$. Let $\cM$ be a coherent $\shD_X$-module. We now consider a $\shD_{X,D}$-\emph{lattice} $\bar \cM$ of $\cM$, 
a special $\shD_{X,D}$-module  associated to $\cM$ (see \S\ref{subsec:lattices} for definition). Typical examples of lattices include Deligne lattices (cf. \cite[\S4.4]{WZ}) and lattices given by the graph embedding construction of Malgrange (see  \S\ref{sec:gemMal} for details). 

\begin{theorem}\label{thm:constrlattice}
Suppose that $(X,D)$ is an analytic smooth log pair and that $\cM$ is a holonomic $\shD_X$-module. If $\bar\cM$ is a $\shD_{X,D}$-lattice of $\cM$, then the log de Rham complex $\DR_{X,D}(\bar\cM)$
is constructible.
\end{theorem}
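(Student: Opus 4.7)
The plan is to adapt the proof of Kashiwara's classical constructibility theorem, replacing the cotangent bundle $T^*X$ and the characteristic variety $\Ch(\cM)$ by the logarithmic cotangent bundle $T^*X(\log D)$ and the logarithmic characteristic cycle $\CC_{X,D}(\bar\cM)$ introduced earlier in the paper. Constructibility of $\DR_{X,D}(\bar\cM)$ then reduces to three ingredients: first, that $\Ch_{X,D}(\bar\cM)$ is a closed analytic subset of $T^*X(\log D)$ whose components are \emph{log Lagrangian}, i.e.\ involutive for the Poisson structure on $T^*X(\log D)$ and of dimension $\dim X$; second, that such a subset decomposes as a finite union of closures of logarithmic conormals $\overline{T^*_{S_\alpha}X(\log D)}$ to strata of a Whitney stratification $\{S_\alpha\}$ of $X$ compatible with $D$; and third, that along each stratum $S_\alpha$ the cohomology sheaves of $\DR_{X,D}(\bar\cM)$ are locally constant.

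For the first ingredient, the involutivity half is a logarithmic analogue of the Sato--Kashiwara--Kawai theorem and can be transferred from the ordinary case by passing to the Rees algebra of $\shD_{X,D}$, or more directly through the relative characteristic cycle formalism of \S\ref{subsec:relchcc}. The dimension estimate $\dim\Ch_{X,D}(\bar\cM)=\dim X$ is the heart of the argument and relies essentially on the lattice hypothesis: one compares the log characteristic cycle of $\bar\cM$ with the ordinary characteristic cycle of the holonomic $\cM$ using the relative characteristic cycle formulas of \S\ref{sec:logdmodule}, applied to a suitable deformation (such as Malgrange's graph embedding, \S\ref{sec:gemMal}) interpolating between $\shD_{X,D}$ and $\shD_X$. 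The second ingredient is then a purely local computation on $T^*X(\log D)$: in coordinates with $D=\{z_1\cdots z_k=0\}$, the splitting $\Omega^1_X(\log D)=\bigoplus_{i\leq k}\cO_X\tfrac{dz_i}{z_i}\oplus\bigoplus_{j>k}\cO_X dz_j$ forces any log Lagrangian subvariety to be a finite union of the coordinate log conormals, which one promotes to a global Whitney stratification by standard techniques.

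The third ingredient is a Koszul argument in the spirit of Kashiwara. Locally near a point of $S_\alpha$, the log de Rham complex $\DR_{X,D}(\bar\cM)$ is quasi-isomorphic to a Koszul complex for the action on $\bar\cM$ of the log vector fields transverse to $S_\alpha$; by the containment $\Ch_{X,D}(\bar\cM)\subset\bigcup_\alpha\overline{T^*_{S_\alpha}X(\log D)}$ supplied by the previous step, these log vector fields are non-characteristic for $\bar\cM$ along $S_\alpha$, and the Koszul differentials therefore induce locally constant cohomology. The main obstacle is the dimension bound in the first ingredient: the Bernstein inequality for $\shD_{X,D}$ alone yields only $\dim\Ch_{X,D}(\bar\cM)\ge\dim X$, and there exist coherent $\shD_{X,D}$-modules whose log characteristic variety has strictly larger dimension, so the lattice hypothesis must be used in a nontrivial way precisely here. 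The relative characteristic cycle machinery developed earlier in the paper is exactly what effects the transfer of ordinary holonomicity of $\cM$ into logarithmic holonomicity of $\bar\cM$.
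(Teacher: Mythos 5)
There is a genuine gap, and it sits exactly where you locate the ``heart of the argument.'' Your plan is a direct logarithmic transcription of Kashiwara's proof: show $\Ch_{\log}(\bar\cM)$ is log Lagrangian of dimension $\dim X$, decompose it into closures of log conormals to strata of a stratification adapted to $D$, then run a non-characteristic Koszul argument along each stratum. Two of these three steps fail as stated. First, the structural claim that a log Lagrangian subvariety of $T^*(X,D)$ must be a finite union of (coordinate) log conormal closures is false: the log Poisson structure degenerates along $D$, and involutive subvarieties of dimension $\dim X$ with components contained in $T^*(X,D)|_D$ that are \emph{not} closures of conormals from $U$ do occur. The paper's own example $\cM=\C[t,1/t]\cdot e^{1/t}$ (a $\shD_{X,H}$-lattice of itself, holonomic but irregular) has $\Ch_{\log}(\cM)$ containing a whole fiber component over $H$. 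Second, and consequently, the non-characteristic step collapses on such components: along strata of $D$ lying under a vertical component, the log vector fields $z_i\partial_{z_i}$ are characteristic for $\bar\cM$, so local constancy of the cohomology of the Koszul complex does not follow from the classical argument. The only control you propose over $\Ch_{\log}(\bar\cM)$ --- comparison with $\overline{\CC(\cM|_U)}$ via the relative characteristic cycle machinery, i.e.\ Theorem \ref{thm:CClogC} --- is proved (and is only true) for \emph{regular} holonomic $\cM$, whereas Theorem \ref{thm:constrlattice} assumes only holonomicity. So your scheme would at best prove a weaker, regular-case statement, and even then the stratification/local-constancy steps in the log category are not justified by the ingredients you cite.

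The paper's proof avoids building any logarithmic analogue of the Lagrangian--stratification theory. It works locally, uses the log direct image under the graph embedding $\iota_\bff\colon (X,D)\hookrightarrow (Z,D_Z)$ to replace $\bar\cM$ by (a submodule of) a lattice of the form $\shD_X[\bs](\bff^\bs\cdot\cM_0(kD))$, which is a relative $\shD$-module over $\C[\bs]$ and is relative holonomic by Theorem \ref{thm:mairelhol} (no regularity needed, since $\Ch_\rel=\Ch(\cM(*D))\times\C^r$); relative holonomicity passes to the submodule by Corollary \ref{cor:relholab}, the derived fiber at $\bs=\mathbf 0$ has holonomic cohomology by Lemma \ref{lm:spsmsubvar}, and via $s_i\mapsto -t_i\partial_{t_i}$ and Proposition \ref{prop:logpushfdr} the complex $\DR_{X,D}(\bar\cM)$ is identified with the \emph{absolute} de Rham complex of that fiber. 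Constructibility is then quoted from Kashiwara's theorem in the absolute setting. If you want to salvage your approach, you would need an intrinsic finiteness/local-constancy argument that handles vertical components of $\Ch_{\log}$ over $D$ (this is essentially the open logarithmic Riemann--Hilbert territory the introduction alludes to); as written, the proposal does not supply it.
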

The above theorem naturally generalizes Kashiwara's constructibility theorem and answers the question at the beginning of \cite{WZ}. The constructible complex $\DR_{X,D}(\bar\cM)$ is not perverse in general and the stratification of the constructible complex $\DR_{X,D}(\bar\cM)$ is determined by the stratification of $\Ch(\cM(*D))$ (see Remark \ref{rmk:stratificationlogDR}).
\begin{theorem}\label{thm:j*j!DR}
In the situation of Theorem \ref{thm:constrlattice}, $\DR_{X,D}(\bar\cM(kD))$ are perverse locally on a relative compact open subset of $X$ $($or globally when $X$ is algebraic$)$ for all $|k|\gg 0$. Moreover,
if $\cM$ is regular holonomic, then locally on a relative compact open subset of $X$ $($or globally when $X$ is algebraic$)$ we have natural quasi-isomorphisms
\begin{enumerate}
    \item $\DR_{X,D}(\bar\cM(kD))\stackrel{q.i.}{\simeq} Rj_*\DR(\cM|_U)$,
    \item $\DR_{X,D}(\bar\cM(-kD))\stackrel{q.i.}{\simeq} j_!\DR(\cM|_U)$
\end{enumerate}
for all integral $k\gg 0$, where $j\colon U=X\setminus D\hookrightarrow X$ is the open embedding.
\end{theorem}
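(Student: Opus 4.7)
The plan is to deduce the theorem from Theorem~\ref{thm:constrlattice} together with a local analysis of $\bar\cM(kD)$ via the $V$-filtration, reducing the comparison with $Rj_*$ and $j_!$ to a Deligne-type residue argument.

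First, by Theorem~\ref{thm:constrlattice} each $\DR_{X,D}(\bar\cM(kD))$ is constructible. Since the complement of $U$ is a divisor, the open embedding $j$ is affine, so $Rj_*$ and $j_!$ both preserve perversity; combined with Kashiwara's theorem that $\DR(\cM|_U)$ is perverse, the quasi-isomorphisms (1) and (2) will imply the perversity of $\DR_{X,D}(\bar\cM(\pm kD))$ in the regular holonomic case. For a general (possibly irregular) holonomic $\cM$, I would instead check perversity directly on cohomology sheaves using the bounds on their supports provided by the log characteristic cycle computations of the earlier sections, as indicated in Remark~\ref{rmk:stratificationlogDR}.

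To establish (1), I will exploit the tautological identity $\cM(*D)=\bigcup_{k\ge 0}\bar\cM(kD)$, which yields
\begin{equation*}
\DR_{X,D}(\cM(*D)) = \varinjlim_{k}\DR_{X,D}(\bar\cM(kD));
\end{equation*}
in the regular holonomic case, the left-hand side is quasi-isomorphic to $Rj_*\DR(\cM|_U)$ by the Grothendieck comparison theorem for localized regular holonomic modules. It therefore suffices to show that the transition maps $\DR_{X,D}(\bar\cM(kD))\to \DR_{X,D}(\bar\cM((k+1)D))$ are quasi-isomorphisms once $k$ is large. Working locally in adapted coordinates $(t_1,\ldots,t_r,x_{r+1},\ldots,x_n)$ with $D=\{t_1\cdots t_r=0\}$, the multi-$V$-filtration of $\cM(*D)$ along the $t_i$ realises $\bar\cM(kD)$ as a uniform $-k$ shift of $\bar\cM$, so the residue eigenvalues on the associated graded pieces shift by $-k$. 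For $k$ sufficiently large, none of these eigenvalues is a strictly positive integer along any component of $D$, which is Deligne's criterion for the log de Rham complex to recover $Rj_*\DR(\cM|_U)$.

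Part (2) is the Verdier-dual assertion, which I will handle by the analogous residue argument: $\bar\cM(-kD)$ is a decreasing submodule filtration of $\bar\cM$ whose residue eigenvalues are shifted in the opposite direction, and for $k\gg 0$ none of them is a non-positive integer, the dual Deligne condition producing $j_!\DR(\cM|_U)$. Alternatively one can obtain (2) from (1) by applying holonomic duality and using the compatibility of Verdier duality with $\DR_{X,D}$ on lattices. The main obstacle will be the uniformity on a relatively compact open (or globally in the algebraic setting) of the threshold for $k$, together with the extension of Deligne's one-variable residue criterion to a general (not necessarily meromorphic) $\shD_{X,D}$-lattice in several variables; both points reduce, via the graph embedding construction of Malgrange and the relative characteristic cycle formalism developed earlier in the paper, to the local finiteness of the residue spectrum of $\cM(*D)$ along $D$ in the regular holonomic case.
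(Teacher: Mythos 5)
There are two genuine gaps in your proposal, both at the places where you yourself flag "obstacles."

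First, the heart of your argument for (1)--(2) — that the transition maps $\DR_{X,D}(\bar\cM(kD))\to\DR_{X,D}(\bar\cM((k+1)D))$ become quasi-isomorphisms for $k\gg0$ by a "Deligne residue criterion" — is not proved, and it cannot be obtained by a routine extension of Deligne's criterion. Deligne's criterion is a statement about $\sO$-coherent logarithmic connections, where the residue along each branch of $D$ is an endomorphism of a coherent sheaf and hence has a finite spectrum. A general $\shD_{X,D}$-lattice $\bar\cM$ is only $\shD_{X,D}$-coherent, so there is no a priori finite "residue spectrum," and your claimed reduction to "local finiteness of the residue spectrum of $\cM(*D)$ along $D$" is precisely the hard content, not a reduction of it. In the paper this content is supplied by Sabbah's generalized $b$-functions (Theorem \ref{thm:mibfs}), applied after passing through the graph embedding: one compares $\iota^{\log}_{\bff+}(\bar\cM)$ with the Malgrange-type lattice $\shD_X[\bs](\bff^\bs\cdot\cM_0(kD))$, kills the quotient $\cQ$ at $\bs=\mathbf 0$ after a twist by $-lD_Z$ using the fact that $b(\bs+\bl)$ does not vanish at $\mathbf 0$ for $l\gg0$, and then invokes \cite[Corollary 5.4]{WZ} to identify the resulting fiber with $j_!\DR(\cM|_U)$ (part (1) being the analytification of \cite[Theorem 1.1]{WZ}). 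Your residue-shift heuristic is morally the same root-shifting mechanism, but without the $b$-function input (existence of which rests on Theorems \ref{thm:existadfan} and \ref{thm:goodsat}) the key vanishing is simply asserted. Your alternative route to (2) via duality is also unsupported: neither that the $\shD_{X,D}$-dual of a lattice is a lattice of $\mathbb{D}\cM$, nor that $\DR_{X,D}$ intertwines log duality with Verdier duality, is available in the paper, and both are delicate for logarithmic modules.

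Second, the perversity statement is asserted for all holonomic $\cM$, not just regular ones, and in the irregular case the comparisons (1)--(2) are not available (and may fail), so your main mechanism ($j$ affine, hence $Rj_*$, $j_!$ preserve perversity) does not apply there. Your fallback — checking perversity "directly on cohomology sheaves using the bounds on their supports" from the log characteristic cycle — only addresses the support half of the perversity conditions; dimension bounds on $\supp\cH^{-i}$ give no control of the costalk (cosupport) condition, so this does not yield perversity. The paper's argument is different: it shows, using the arguments of \cite[Theorems 5.2, 5.3]{WZ}, \cite[\S3.6]{BVWZ} and Sabbah's $b$-functions, that the twisted lattices $\shD_X[\bs](\bff^\bs\cdot\cM_0(kD))$, and then $\iota_\bff^{-1}(\iota^{\log}_{\bff+}(\bar\cM(kD)))$, are flat over a neighborhood of $\mathbf 0\in\Spec\C[\bs]$ for $|k|\gg0$; flatness makes the derived fiber at $\mathbf 0$ a single holonomic $\shD_X$-module concentrated in degree $0$ (cf. Lemma \ref{lm:spsmsubvar} and Remark \ref{rmk:stratificationlogDR}(1)), whence $\DR_{X,D}(\bar\cM(kD))$ is perverse by Kashiwara's theorem, with the threshold in $k$ uniform only on relatively compact opens — which is exactly why that restriction appears in the statement. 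Your proposal does not recover this flatness step, which is what actually drives the perversity claim.
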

Taking the lattice $\bar\cM=\sO_X$ in Theorem \ref{thm:j*j!DR} (1),  we recover the Grothendieck comparison \cite{Grocm},
\[[\sO_X\rightarrow \Omega^1_X(\log D)\rightarrow\cdots \rightarrow\Omega^n_X(\log D)]\stackrel{q.i.}{\simeq}Rj_*\C_U[n],\]
where $n$ is the dimension of $X$. See also \cite[Theorem 1.2]{WZ}. Theorem \ref{thm:j*j!DR} for lattices given by the graph embedding construction has been used in studying the cohomology support loci of rank one complex local systems \cite{BVWZ,BVWZ2}.

The Kashiwara's constructibility theorem has been extended to Riemann-Hilbert correspondence, the regular case by Kashiwara and Mebkhout independently (see for instance \cite[\S 7]{HTT}) and the irregular case by D’Agnolo and Kashiwara \cite{DAK15}. Under the logarithmic setting, Kato and Nakayama \cite{KN99} and Ogus \cite{OglogRH} studied Riemann-Hilbert correspondence for log connections on smooth log schemes, and Koppensteiner and Taplo \cite{KT} further studied the theory of logarithmic $\shD$-modules on smooth log schemes. Koppensteiner \cite{K20} (based on the work of Ogus) augmented the log de Rham complexes to graded complexes on Kato-Nakayama spaces and proved a finiteness result for logarithmic holonomic $\shD$-modules. Since smooth log pairs are smooth log schemes, lattices in this paper are special examples of log $\shD$-modules in \cite{KT}.  
Therefore, one can naturally ask whether Theorem \ref{thm:constrlattice} together with Theorem \ref{thm:j*j!DR} can be enhanced to a  Riemann-Hilbert correspondence on  smooth log pairs in the logarithmic category. 
Notice that our proof of Theorem \ref{thm:constrlattice} is logarithmic in nature, since it depends on the natural stratification of the normal crossing divisor $D$, which gives evidence of the existence of the log Riemann-Hilbert correspondence. 

A similar logarithmic Riemann-Hilbert program for log holonomic modules has also been proposed in \cite{KT}. However, lattices are not logarithmic holonomic \cite[Definition 3.22]{KT} in general and hence Theorem \ref{thm:constrlattice} is different from the finiteness result in \cite{K20}. It would be interesting to further understand relations between lattices and logarithmic holonomic modules.

Our proof of Theorem \ref{thm:constrlattice} and Theorem \ref{thm:j*j!DR} depends on ``transforming" log $\shD$-modules to relative $\shD$-modules and on the study of relative characteristic cycles. Typical examples of non-trivial relative $\shD$-modules arise from the construction of the generalized Kashiwara-Malgrange filtrations. Then we discuss relative characteristic cycles associated to Kashiwara-Malgrange filtrations.

\subsection{$V$-filtrations along slopes of smooth complete intersections and their relative characteristic cycles}
Suppose that $X$ is a smooth algebraic variety over $\C$ (or a complex manifold) and that $Y\subseteq X$ is a smooth complete intersection of codimension $r$, that is, 
\[Y=\bigcap_{j=1}^r H_j\]
where $\sum_jH_j$ is a normal crossing divisor. Let $\shD_X$ be the sheaf of rings of algebraic (or holomorphic) differential operators. Define a $\Z^r$-filtration on $\shD_X$ along $Y$ by
\be\label{eq:kKMD}
V_{\bs}\shD_X=\bigcap_{j=1}^r V_{s_j}^{H_j}\shD_X
\ee
for every $\bs=(s_1,s_2\dots,s_r)\in \Z^r$, where $V_{\bullet}^{H_j}\shD_X$ is the Kashiwara-Malgrange filtration on $\shD_X$ along $H_j$ (see Definition \ref{def:KMalongY}). Following ideas of Sabbah \cite{Sab}, for a nondegenerate slope $L$ in the dual cone $(\Z_{\ge 0}^r)^\vee$, we define the (generalized) Kashiwara-Malgrange filtration on $\shD_X$ along $L$ by 
\[^LV_{L(\bs)}\shD_X=\sum_{L(\bs')\le L(\bs)}V_{\bs'}\shD_X.\]
This gives a $\Z$-filtration $^LV_{\bullet}\shD_X$ via the isomorphism $\Z\simeq\Z^r/L^\perp$.
For a coherent $\shD_X$-module $\cM$, one can then define the Kashiwara-Malgrange filtration (or $V$-filtration for short) on $\cM$ along $L$ (see Definition \ref{def:fltLV}). For degenerate slopes, one can reduce to the nondegenerate ones by ignoring the unrelated $H_j$.

The $V$-filtration of a coherent $\shD_X$-module $\cM$ along $L$ (if exists) contains the ``deformation'' information of $\cM$. More precisely, the Rees ring $^LR_V\shD_X$ associated to $^LV_\bullet \shD_X$ gives the sheaf of differential operators relative to the (algebraic) normal deformation of $Y$ in $X$ along $L$,
\[\varphi^L:\widetilde X^L\to \bbC,\]
where $\widetilde X^L$ is the ambient space of the deformation. Hence, the Rees module $^LR_V\cM$ associated to $^LV_\bullet\cM$ is a $\shD$-module relative to $\varphi^L$. See \S\ref{subsect:spalongL} for details.

\begin{theorem}[Sabbah]\label{thm:relccL}
Suppose that $\cM$ is a holonomic $\shD_X$-module and that $Y$ is a smooth complete intersection of codimension $r$. Then $\cM$ is specializable along every slope vector $L$ (i.e. the $V$-filtration on $\cM$ along $L$ uniquely exists). Moreover, if $\cM$ is regular holonomic and the slope vector $L$ is nondegenerate, then
$\gr^{^LV}_\bullet\cM$ gives a regular holonomic $\shD$-module on $T_YX$ and we have the following formulas for characteristic cycles,
\[\CC_{\widetilde X^L/\bbC}({ }^LR_V\cM)=\overline{q^{*}_L(\CC(\cM))}\subseteq T^*(\wt X^L/\bbC)\]
and 
\[\CC(\wt\gr^{^LV}_\bullet\cM)=\overline{q^{*}_L(\CC(M))}|_{T^*T_YX}\subseteq T^*T_YX,\]
where $q_L:T^*(\wt X^L/\bbC)\setminus T^*T_YX\simeq T^*X\times\bbC^\star\to T^*X$ is the natural projection.
\end{theorem}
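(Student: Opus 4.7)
The plan is to proceed in three stages: (i) establish specializability of $\cM$ along every slope vector $L$; (ii) identify the Rees construction geometrically with the deformation $\varphi^L\colon\wt X^L\to\C$; and (iii) compute the relative characteristic cycle by pinning it down on the generic fiber $X\times\C^\star$ and then taking the appropriate closure across the zero fiber.

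For (i), I would follow Sabbah: working locally in coordinates with $H_j=\{t_j=0\}$, for a local section $m$ of $\cM$ holonomicity yields a multivariable Bernstein-Sato functional equation $b(s_1,\dots,s_r)\,m\in V_{\bs-\mathbf 1}\shD_X\cdot m$ with $s_j=t_j\partial_{t_j}$. The key factorization, essentially due to Sabbah, decomposes $b$ into a product $\prod_L b_L(L(\bs))$ indexed by (nondegenerate) slopes in $(\Z_{\ge 0}^r)^\vee$, and grouping the roots according to any fixed slope $L$ produces the $V$-filtration $^LV_\bullet\cM$. Uniqueness follows from the standard Bernstein-Sato characterization of the $V$-filtration along a single direction after passing to the quotient $\Z^r/L^\perp\simeq\Z$.

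For (ii), I would construct $\wt X^L$ as the relative $\mathrm{Spec}$ of the $\Z$-graded Rees algebra associated to the $L$-filtration of $\sO_X$ along the ideal $\cI_Y$. This produces a flat family $\varphi^L\colon\wt X^L\to\C$ whose zero fiber is $T_YX$ and whose complement is canonically $X\times\C^\star$. The Rees ring $^LR_V\shD_X$ is then identified with the sheaf of relative differential operators $\shD_{\wt X^L/\C}$, and $^LR_V\cM$ becomes a $\shD_{\wt X^L/\C}$-module. Off the zero fiber it restricts to the pullback of $\cM$, so its relative characteristic cycle on $T^*(\wt X^L/\C)\setminus T^*T_YX\cong T^*X\times\C^\star$ is exactly $q_L^{*}\CC(\cM)$.

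The crux, and the main obstacle, is the behavior over the zero fiber: the formula $\CC({}^LR_V\cM)=\overline{q_L^{*}\CC(\cM)}$ asserts that no extra components supported entirely in $T^*T_YX$ appear. I would prove this by building a coherent good filtration on $^LR_V\cM$ (arising from a good $V$-filtration on $\cM$ compatible with $^LV_\bullet$), then invoking conservation of cycles under flat specialization: the generic cycle $q_L^{*}\CC(\cM)$ must specialize to its flat limit over $\{\varphi^L=0\}$, which by definition is the restriction of $\overline{q_L^{*}\CC(\cM)}$ to $T^*T_YX$. The second formula for $\wt\gr^{^LV}_\bullet\cM$ then follows by identifying the graded module with the zero-fiber restriction of $^LR_V\cM$ via the Rees/graded correspondence, together with the compatibility of relative and absolute characteristic cycles along a smooth fiber. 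Regularity of $\gr^{^LV}_\bullet\cM$ in the regular holonomic case is the most delicate point: I would obtain it by iterating the Kashiwara-Malgrange preservation of regularity under nearby and vanishing cycles along each hypersurface $H_j$, and using Sabbah's observation that the grading along a nondegenerate slope can be refined by successive ordinary $V$-filtrations, each of which preserves regular holonomicity.
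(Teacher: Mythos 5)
The crux of your argument---ruling out components of $\Ch_\rel({}^LR_V\cM)$ lying over the zero fiber---is exactly where the proposal breaks down. You appeal to ``conservation of cycles under flat specialization,'' but flatness of ${}^LR_V\cM$ over $\C[u]$ is automatic (it is a submodule of $\cM[u,1/u]$, hence $u$-torsion-free, and the base is a curve) and does not exclude vertical components of the relative characteristic variety. For instance, $\shD_{\C_x\times\C_u/\C_u}/(u\partial_x-1)$ is $u$-torsion-free, restricts over $u\ne0$ to the rank-one connection generated by $e^{x/u}$ whose characteristic cycle is the zero section, yet its relative characteristic variety is $\{u\xi_x=0\}$, which contains the entire fiber $T^*\C_x$ over $u=0$. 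Note also that your argument for the cycle formulas never uses regular holonomicity of $\cM$, while the paper's example with $e^{1/t}$ shows regularity is indispensable for Theorem \ref{thm:relccL}; an argument that works without it must be wrong. The missing ingredient is precisely Lemma \ref{lm:BMM} (Sabbah, Brian\c{c}on--Maisonobe--Merle): applied with $\cN=\wt\cM_L=j^L_*p_L^*\cM$ (regular holonomic), $\cN_\rel={}^LR_V\cM$ and $F=u$, it identifies $\Ch_\rel({}^LR_V\cM)$ with the relative conormal spaces of the strata dominating the base, giving the first formula; the second formula is then obtained from Proposition \ref{prop:spnotorsioncc} (torsion-freeness \emph{plus} absence of components over $u=0$), not from a general ``compatibility of relative and absolute characteristic cycles along a smooth fiber,'' which hides exactly the same hypothesis.

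Two further steps are also not sound as stated, and the paper handles them differently. For specializability, ``grouping the roots'' of a multivariable factorization $\prod_i b_{L_i}(L_i(\bs))$ cannot produce ${}^LV_\bullet\cM$ for an arbitrary slope $L$: only the finitely many slopes of an adapted fan occur in such a factorization, so for a general $L$ there is nothing to group; in the paper the logic runs the other way, with the multivariable $b$-function (Theorem \ref{thm:mibfs}) deduced from $L$-specializability together with Sabbah's toric saturation machinery. Theorem \ref{thm:Lsphol} is instead proved directly: for a good filtration over ${}^LV_\bullet\shD_X$ the graded module is holonomic by Bj\"ork's grade-number argument, so $\theta_L$ has a minimal polynomial, and Kashiwara's normalization procedure produces ${}^LV_\bullet\cM$ (uniqueness then globalizes the local construction). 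Likewise, regularity of $\gr^{^LV}_\bullet\cM$ is not obtained by iterating one-variable nearby/vanishing cycles along the individual $H_j$---Remark \ref{rmk:nabmkm} warns that refining the grading along a nondegenerate slope by successive ordinary $V$-filtrations fails in general---but by Lemma \ref{lm:Lgrnearby}, which exhibits $\wt\gr^{^LV}_\bullet\cM$ as a single nearby cycle $\Psi_{u=0}(\wt\cM_L)$ along the smooth divisor $T_YX\subset\wt X^L$, so that Kashiwara's Theorem \ref{thm:holnb} applies at once.
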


One can also consider the Rees ring $R_V\shD_X$ associated to the $\Z^r$-filtration $V_\bullet\shD_X$. Similar to $^LR_V\shD_X$, $R_V\shD_X$ can be seen as the sheaf of differential operators relative to the (refined) normal deformation of $Y$ in $X$ (see  \S\ref{subsec:refnormde}),
\[\varphi: \widetilde X\to \bbC^r,\]
with the $\Z^r$-grading on $R_V\shD_X$ induced from the natural toric structure on the base $\bbC^r$. Then $\varphi^L$, as well as $^LR_V\shD_X$, is obtained from $\varphi$ and $R_V\shD_X$ respectively through the base-change,
\[\iota_L\colon\C \hookrightarrow \C^r \]
induced by the one parameter subgroup of $L$. For a $\shD_X$-module $\cM$ with a good filtration $U_\bullet\cM$ over $V_\bullet\shD_X$, the associated Rees module $R_U\cM$ is then a coherent relative $\shD$-module with respect to $\varphi$. 
\begin{theorem}[Sabbah]\label{thm:CCrelRU}
Suppose that $\cM$ is a regular holonomic $\shD_X$-module and that Y is a smooth complete intersection of codimension $r$. Let $U_\bullet\cM$ be a good $\Z^r$-filtration over $V_\bullet\shD_X$. Then we have
\[\CC_{\widetilde X/\bbC^r}(R_U\cM)=\overline{q^{*}(\CC(\cM))}\subseteq T^*(\wt X/\bbC^r)\]
where $q:T^*(\wt X^L/\bbC)\setminus (\prod^r_{j=1}u_j=0)\simeq T^*X\times(\bbC^\star)^r\to T^*X$ is the natural projection and $(u_1,u_2,\dots,u_r)$ are coordinates of $\bbC^r$. 
\end{theorem}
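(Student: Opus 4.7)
The plan is to reduce the $\Z^r$-graded statement to the one-parameter slope case Theorem~\ref{thm:relccL} via base change along the one-parameter subgroups $\iota_L:\C\hookrightarrow\C^r$ indexed by slopes $L$, and to close with a density argument in the cone of slopes.

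Over the open torus $(\C^\star)^r\subseteq\C^r$ the refined normal deformation $\varphi:\wt X\to\C^r$ restricts to the trivial projection $X\times(\C^\star)^r\to(\C^\star)^r$, so that $R_U\cM$ identifies with $\cM\boxtimes\sO_{(\C^\star)^r}$, yielding
\[
\CC_{\wt X/\C^r}(R_U\cM)\big|_{T^*X\times(\C^\star)^r}=q^*\CC(\cM).
\]
Hence $\overline{q^*\CC(\cM)}$ appears with multiplicity one in $\CC_{\wt X/\C^r}(R_U\cM)$, and the residual effective cycle $\Delta:=\CC_{\wt X/\C^r}(R_U\cM)-\overline{q^*\CC(\cM)}$ is supported above the toric boundary $\{\prod_{j=1}^r u_j=0\}\subset\C^r$; it remains to prove $\Delta=0$.

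For a nondegenerate slope $L\in(\Z_{\ge 0}^r)^\vee$ the pullback $\iota_L^*R_U\cM$ is the Rees module of the induced $\Z$-filtration $^LU_\bullet\cM:=\sum_{L(\bs')\le L(\bs)}U_{\bs'}\cM$, which is good along $^LV_\bullet\shD_X$; because $\cM$ is regular holonomic the Rees characteristic cycle is independent of the choice of good filtration, so Theorem~\ref{thm:relccL} applied to $^LU_\bullet\cM$ gives
\[
\CC_{\wt X^L/\C}(\iota_L^*R_U\cM)=\overline{q_L^*\CC(\cM)}=\iota_L^*\overline{q^*\CC(\cM)},
\]
whence $\iota_L^*\Delta=0$. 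Running the parallel argument for degenerate slopes, which reduces by the paper's remark to the nondegenerate case for the smaller complete intersection $\bigcap_{j\in I}H_j$ (for proper subsets $I\subset\{1,\dots,r\}$), yields the analogous vanishing above every stratum of $\{\prod u_j=0\}$. Since the family $\{\iota_L\}$ sweeps a Zariski-dense set of one-parameter orbits meeting every such stratum and $\Delta$ is $(\C^\star)^r$-equivariant from the Rees grading on $R_V\shD_X$, this collective vanishing forces $\Delta=0$.

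The main obstacle is the base-change compatibility $\iota_L^*\CC_{\wt X/\C^r}(R_U\cM)=\CC_{\wt X^L/\C}(\iota_L^*R_U\cM)$ for the non-smooth embedding $\iota_L$, which is essential to transfer information between the $\Z^r$-graded and $\Z$-graded settings. This rests on verifying that $\iota_L$ is non-characteristic for $R_U\cM$ in the relative sense, a consequence of goodness of $U_\bullet\cM$ together with the toric homogeneity of the Rees construction. A secondary technical point is the filtration-independence of the Rees characteristic cycle for regular holonomic $\cM$, invoked so as to apply Theorem~\ref{thm:relccL} (stated for the $V$-filtration) to the possibly different good filtration $^LU_\bullet\cM$.
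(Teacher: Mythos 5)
Your proposal diverges from the paper's argument and, as written, has a genuine gap at its central step. Everything hinges on the base-change compatibility $\iota_L^*\CC_{\wt X/\C^r}(R_U\cM)=\CC_{\wt X^L/\C}(\iota_L^*R_U\cM)$, and your justification for it is circular. The only restriction statement available in this setting (Proposition \ref{prop:spnotorsioncc}, applied hyperplane by hyperplane) requires precisely that the relative characteristic cycle have no components over the locus one restricts to; but the absence of components of $\CC_{\wt X/\C^r}(R_U\cM)$ over $\{\prod_j u_j=0\}$ is exactly the assertion $\Delta=0$ you are trying to prove. ``Non-characteristicity from goodness and toric homogeneity'' is not a property one gets for free here: $R_U\cM$ need not be flat over $\C^r$ (this failure is emphasized in the paper, cf.\ Remark \ref{rmk:nabmkm} and the surrounding discussion), and $\iota_L^*R_U\cM$ may acquire $u$-torsion --- the paper only identifies $^LR_U\cM$ with $\iota_L^*(R_U\cM)/\cT_u$. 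There is a second problem even if base change were granted: a component of $\Delta$ lying over a positive-dimensional boundary stratum such as $\{u_1=0\}\times(\C^\star)^{r-1}$ is invisible to the nondegenerate curves $\iota_L$, which meet the boundary only at the origin; the countable family of integral slopes together with $(\C^\star)^r$-equivariance does not force vanishing over such a stratum, and the degenerate-slope curves again require the same unproven restriction compatibility at the finitely many boundary points they hit. Finally, your appeal to Theorem \ref{thm:relccL} for the filtration $^LU_\bullet\cM$ is not licensed by that theorem as stated (it concerns the Kashiwara--Malgrange filtration $^LV_\bullet\cM$); ``independence of the good filtration'' for the Rees characteristic cycle is itself something that needs proof.

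The paper's proof is much more direct and avoids all of this: it applies Lemma \ref{lm:BMM} (Sabbah, Brian\c{c}on--Maisonobe--Merle) once on $\varphi\colon\wt X\to\C^r$, taking $\cN=j_*(p^*\cM)$ (regular holonomic and without $F$-torsion for $F=\prod_j u_j$) and $\cN_\rel=R_U\cM$, which generates $\cN$. The lemma identifies $\Ch_\rel(R_U\cM)$ with the union of relative conormal spaces of the strata dominating the base, i.e.\ with $\overline{q^*(\Ch(\cM))}$, and the multiplicities are then determined generically over the open torus --- where your computation $\CC_\rel(R_U\cM)|_{T^*X\times(\C^\star)^r}=q^*\CC(\cM)$ is indeed the correct (and only needed) input. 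In other words, the absence of boundary components is an \emph{output} of the BMM-type lemma, not something one can feed in by slope-wise restriction; note also that Theorem \ref{thm:relccL} is itself proved by this same lemma (Theorem \ref{thm:micccl}), so your reduction would run the logic of the paper backwards. To salvage your approach you would need an independent proof of the restriction compatibility along $\iota_L$ (or of flatness of $R_U\cM$ over $\C^r$, which is false in general), and a genuinely stratum-by-stratum argument over the whole toric boundary rather than over the one-parameter orbits alone.
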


Theorem  \ref{thm:relccL} and \ref{thm:CCrelRU} are essentially due to Sabbah (see \cite[\S 2]{Sab2}). Their proof  relies on the study of characteristic cycles for relative $\shD$-modules (see \S\ref{sec:vfilreld}). When $L=(\underbrace{1,1,\dots,1}_r)$ (use the standard dual basis of $(\Z^r)^\vee$), the characteristic cycle formula for $\gr^{^LV}_\bullet\cM$ in Theorem \ref{thm:relccL} is due to Ginsburg (\cite[Theorem 5.8]{Gil}). But the precise characteristic cycle formulas in Theorem \ref{thm:relccL} and \ref{thm:CCrelRU} seem to be missing in the literature. It is worth mentioning that Ginsburg use the characteristic cycle formula in \cite[Theorem 5.8]{Gil} to study index theorems. It would be interesting to know whether the characteristic cycle formulas in Theorem \ref{thm:relccL} are related to index theorems or even Fukaya categories (cf. \cite{NZ}).  

\subsection{Relative Riemann-Hilbert correspondence}
We give an explicit relative Riemann-Hilbert correspondence for $^LR_V\cM$. We assume $\cM$ a regular holonomic $\shD_X$-module. The relative $\shD$-module $^LR_V\cM$ has fibers
\[ \label{eq:spdefL}
\bL i_{\alpha}^*({ }^LR_V\cM)\stackrel{q.i.}{\simeq}\cM \textup{ for $0\not=\alpha\in \bbC$ and, }\bL i_{0}^*({ }^LR_V\cM)\stackrel{q.i.}{\simeq}\wt\gr^{^LV}_\bullet\cM,
\]
where $\stackrel{q.i.}{\simeq}$ denotes quasi-isomorphism
and $ i_\alpha:\widetilde X^L_\alpha=(\varphi^L)^{-1}(\alpha)\hookrightarrow \widetilde X^L$ is the closed embedding. Thus, $^LR_V\shD_X$ deforms $\cM$ into $\gr^{^LV}_\bullet\cM$.
By Theorem \ref{thm:relccL}, $^LR_V\cM$ provides an example of relative regular holonomic $\shD$-modules (see Definition \ref{def:relholo} and \S\ref{subsec:bashchangerelD}). Moreover, using Lemma \ref{lm:cmmpbdr} the relative \emph{de Rham complex}, 
\[\omega_{\widetilde X^L/\bbC}\otimes^\bL_{\shD} { }^LR_V\cM,\]
has fibers $\DR(\cM)$ for $\alpha\not=0$ and the central fiber
$\DR_{T_YX}(\wt\gr^{^LV}_\bullet\cM)$, 
where $\omega_{\wt X^L/\bbC}$ is the relative canonical sheaf of $\varphi_L$ (see \S \ref{subsect:scrm} and Remark \ref{rmk:gndlw} for explicit formulas).
But the central fiber satisfies (applying Theorem \ref{thm:holnb} and Lemma \ref{lm:Lgrnearby}) 
\[\label{eq:Lvsp}
\DR_{T_YX}(\wt\gr^{^LV}_\bullet\cM)\simeq \psi_{T_YX}(Rj^L_*p^{-1}(\DR(M))) \eqqcolon \textup{Sp}^L_{T_YX}(\DR(\cM))
\]
where $\textup{Sp}^L_{T_YX}$ is the \emph{Verdier specialization} along $L$ by definition, $p: \widetilde X^L\setminus T_YX\to X$ is the natural projection and $j^L:\widetilde X^L\setminus T_YX\hookrightarrow \widetilde X^L$ is the open embedding. Thus, the relative de Rham complex 
deforms $\DR(\cM)$ into its Verdier specialization along $L$. In particular, the relative de Rham complex gives a relative constructible complex (cf. \cite{FS17}). 

In general,  a relative regular Riemann-Hilbert correspondence for relative regular holonomic $\shD$-modules over curves is established in \cite{FFS19}. However, the relative holonomicity in \emph{loc. cit.} is more restricted. Motivated by the above example, one might ask whether the relative Riemann-Hilbert correspondence over curves can be extended to the case by using the relative holonomicity in Definition \ref{def:relholo}.

In contrast to $^LR_V\cM$,  $R_U\cM$ is not necessarily relatively holonomic because of the main obstruction that $R_U\cM$ is only torsion-free but not necessarily flat over $\C^r$. Consequently, one cannot ``normalize" $U_\bullet\cM$ into a $\Z^r$-indexed $V$-filtration in general. See Remark \ref{rmk:nabmkm} for more discussions. 
\begin{prop}\label{prop:flatrelhol} 
In the situation of Theorem \ref{thm:CCrelRU}, if $R_U\cM$ is flat over $\C^r$, then $R_U\cM$ is relative holonomic. 
\end{prop}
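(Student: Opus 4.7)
The strategy is to combine the explicit relative characteristic cycle formula of Theorem~\ref{thm:CCrelRU} with the flatness hypothesis and verify the conditions in Definition~\ref{def:relholo}.

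By Theorem~\ref{thm:CCrelRU},
\[ \CC_{\widetilde X/\C^r}(R_U\cM) = \overline{q^{*}(\CC(\cM))} \subseteq T^*(\widetilde X/\C^r). \]
Writing $\CC(\cM) = \sum_\alpha m_\alpha T^*_{Z_\alpha} X$ for irreducible subvarieties $Z_\alpha \subseteq X$ of dimension $d_\alpha \le n$ (possible because $\cM$ is holonomic), we have
\[ q^*(\CC(\cM)) = \sum_\alpha m_\alpha\, T^*_{Z_\alpha}X \times (\C^*)^r \]
on the open subset $T^*X \times (\C^*)^r \subseteq T^*(\widetilde X/\C^r)$, which is of pure dimension $n+r$. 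Its closure in $T^*(\widetilde X/\C^r)$ therefore has dimension $n+r$, exactly the dimension expected of a fiberwise Lagrangian cycle in the relative cotangent bundle over $\C^r$.

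The next task is to identify each irreducible component of $\overline{q^*(\CC(\cM))}$ as a relative conormal variety of some closed subvariety $\widetilde Z_\alpha \subseteq \widetilde X$. I would take $\widetilde Z_\alpha$ to be the closure in $\widetilde X$ of $Z_\alpha \times (\C^*)^r \subset X \times (\C^*)^r \cong \widetilde X|_{(\C^*)^r}$ and check that $\overline{T^*_{Z_\alpha}X \times (\C^*)^r} = T^*_{\widetilde Z_\alpha}(\widetilde X/\C^r)$. Over $(\C^*)^r$ the identification is tautological; extending across the zero locus $\prod u_j = 0$ uses the toric structure on the refined normal deformation $\widetilde X \to \C^r$ and the fact that the equations cutting out the relative conormal of $\widetilde Z_\alpha$ specialize in a controlled way along the boundary.

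Finally, flatness of $R_U\cM$ over $\C^r$ ensures that $R_U\cM$ behaves properly under base change: for every $t \in \C^r$ the underived restriction $\iota_t^* R_U\cM$ is a coherent $\shD_{\widetilde X_t}$-module whose characteristic variety is contained in the set-theoretic fiber of $\CC_{\widetilde X/\C^r}(R_U\cM)$ over $t$. Combined with the dimension count above, this forces each $\iota_t^* R_U\cM$ to be holonomic on $\widetilde X_t$, which is the operative content of relative holonomicity. I expect the main obstacle to be the identification of components in the central fiber over $0 \in \C^r$, where $\widetilde X_0 = T_Y X$ has a genuinely different geometry from the generic fibers: one must verify that the set-theoretic limit of $T^*_{Z_\alpha} X \times (\C^*)^r$ inside $T^*T_YX$ lies in a single relative conormal of $\widetilde Z_\alpha \cap T_YX$, rather than spreading out to something of larger fiber dimension, which is precisely what flatness of $R_U\cM$ is designed to exclude.
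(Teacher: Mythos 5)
There is a genuine gap, and it sits exactly where you say you ``expect the main obstacle'' to be: you never actually supply the mechanism by which flatness controls the fibers of $\overline{q^{*}(\CC(\cM))}$ over the boundary $\prod_j u_j=0$. Your dimension count only shows the total space of the cycle has dimension $n+r$; that is compatible with a fiber over a boundary point having dimension up to $n+r-1$, and the paper explicitly warns that $\overline{q^{*}(\CC(\cM))}$ is a relative conormal space but \emph{not} relative Lagrangian in general when $r\ge 2$ (this is the whole point of the saturation discussion and of Example \ref{ex:notrelhollogres}). So identifying each component as a relative conormal variety $T^*_{\wt Z_\alpha}(\wt X/\C^r)$, even if carried out, does not finish the proof: relative conormal spaces need not have Lagrangian fibers. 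Moreover, your last step misreads Definition \ref{def:relholo}: relative holonomicity is a condition on the fibers $\Ch_\rel(R_U\cM)_t$ themselves, not on the restrictions $\iota_t^{*}R_U\cM$; since one only has the containment $\Ch(\iota_t^{*}R_U\cM)\subseteq \Ch_\rel(R_U\cM)_t$ (which holds without any flatness, cf.\ Lemma \ref{lm:spsmsubvar}), holonomicity of the restrictions would in any case be weaker than what must be proved.

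The missing idea is the cycle-level noncharacteristic restriction statement, Proposition \ref{prop:spnotorsioncc}, applied inductively. Fix an arbitrary $\alpha\in\C^r$ and choose $r$ general hyperplanes $\cH_1,\dots,\cH_r$ through $\alpha$. Flatness of $R_U\cM$ over $\C^r$ (preserved under base change) gives torsion-freeness along each $\wt X\times_{\C^r}\cH_i$, and by Theorem \ref{thm:CCrelRU} every component of $\CC_\rel(R_U\cM)$ dominates $\C^r$, so for general hyperplanes no component lies over them. Proposition \ref{prop:spnotorsioncc} then yields $\CC_\rel(i^*_{\cH_1}R_U\cM)=\CC_\rel(R_U\cM)|_{\cX_{\cH_1}}$, and iterating $r$ times identifies the fiber $\CC_\rel(R_U\cM)|_{\wt X_\alpha}$ with the characteristic cycle of a coherent $\shD_{\wt X_\alpha}$-module; each hyperplane cut drops the dimension by exactly one, giving dimension $n$, while Theorem \ref{thm:rbernsteinM} bounds it below by $n$, so the fiber is Lagrangian. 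This is how flatness actually ``excludes the spreading out'' you allude to; without this (or an equivalent substitute) your argument does not close, and note the issue arises over every boundary point of $(\prod_j u_j=0)$, not only over $0\in\C^r$.
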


\subsection{Logarithmic characteristic cycles of lattices}
If $D=\sum_{i=1}^r H_j$ is a normal crossing divisor, then 
\[\shD_{X,D}=V_{\vec0}\shD_X\]
where the latter is defined in Eq.\eqref{eq:kKMD}. 
This means that if $U_\bullet(\cM(*D))$ is a good filtration over the $\Z^r$-filtration $V_\bullet\shD_X$, then each filtrant $U_{\bs}\cM(*D)$ is a lattice of $\cM$. This is an easy relation between log $\shD$-modules and relative $\shD$-modules. See \S\ref{subsec:logtorel} for a complicated relation between them. Our next main result is an alternative proof of Ginsburg's log characteristic cycle formula based on the complicated relation.
\begin{theorem}[Ginsburg]\label{thm:CClogC}
Suppose that $(X,D)$ is an analytic smooth log pair and that $\cM$ is a regular holonomic $\shD_X$-module. If $\bar\cM$ is a $\shD_{X,D}$-lattice of $\cM$, then 
\[\CC_{\log}(\bar \cM)=\overline{\CC}^{\log}(\cM|_{U}),\]
where $\overline{\CC}^{\log}(\cM|_U)$ denotes the closure of $\CC(\cM_U)\subseteq T^*U$ inside the logarithmic cotangent bundle $T^*(X,D)$. 
\end{theorem}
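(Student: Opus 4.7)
The plan is to derive Ginsburg's log characteristic cycle formula from the relative CC formula of Theorem~\ref{thm:CCrelRU} via the log-to-relative dictionary of \S\ref{subsec:logtorel}. I work locally: choose coordinates $x_1,\dots,x_n$ on $X$ with $D=\{x_1\cdots x_r=0\}$, $H_j=\{x_j=0\}$, and $Y=\bigcap_{j=1}^r H_j$. The decisive identity $\shD_{X,D}=V_{\vec 0}\shD_X$ embeds the log setting into the multi-indexed $V$-filtered setting along the smooth complete intersection $Y$.

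From the lattice $\bar\cM$ together with a log good filtration $\Phi_\bullet\bar\cM$ over $\shD_{X,D}$ computing $\CC_{\log}(\bar\cM)$, I would construct a good $\Z^r$-filtration $U_\bullet\cM(*D)$ over $V_\bullet\shD_X$ whose $\vec 0$-component encodes $\bar\cM$ together with $\Phi$. The easy relation of \S\ref{subsec:logtorel} shows every $\vec 0$-component of such a $V$-filtration is a lattice; the converse lift is the complicated relation, which builds the $U_{\bs}$ by letting $V_{\bs}\shD_X$ act on $\bar\cM$ inside $\cM(*D)=\bar\cM(*D)$. The Rees module $R_U\cM(*D)$ is then a coherent relative $\shD$-module on the refined normal deformation $\varphi\colon\widetilde X\to\C^r$. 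Since $\cM(*D)$ is regular holonomic, Theorem~\ref{thm:CCrelRU} yields
\[
\CC_{\widetilde X/\C^r}(R_U\cM(*D))=\overline{q^*(\CC(\cM(*D)))}.
\]

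The final step is to specialize this relative CC to the central fiber $\varphi^{-1}(\vec 0)=T_YX$ and translate the resulting cycle on $T^*T_YX$ into $T^*(X,D)$. In local coordinates $(\tilde x_j,u_j)$ on $\widetilde X$ with $x_j=u_j\tilde x_j$ for $j\le r$, the dual relation $\tilde\xi_j=u_j\xi_j$ gives $\tilde x_j\tilde\xi_j=x_j\xi_j$, so the log cotangent coordinates $x_j\partial_{x_j}$ on $T^*(X,D)$ match the $(\C^*)^r$-invariant weight-zero coordinates on $T^*(\widetilde X/\C^r)$. Extracting the weight-zero part of the central fiber of $R_U\cM(*D)$ recovers $\gr^\Phi\bar\cM$ on $T^*(X,D)$, while the analogous operation on $\overline{q^*(\CC(\cM(*D)))}$ produces the closure of $\CC(\cM|_U)$ in $T^*(X,D)$. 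Putting these together gives $\CC_{\log}(\bar\cM)=\overline{\CC}^{\log}(\cM|_U)$.

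The main obstacle is the cycle-level bookkeeping in this last step: showing that the weight-zero central-fiber restriction agrees, sheaf-theoretically, with $\gr^\Phi\bar\cM$, and checking that the corresponding cycle operation on $\overline{q^*(\CC(\cM(*D)))}$ discards precisely the components supported over $D$ that arise from the meromorphic extension $(*D)$, while preserving the closures of components of $\CC(\cM|_U)$ with correct multiplicities. This matching is the substance of the ``complicated relation'' of \S\ref{subsec:logtorel} and is where the lattice hypothesis is essential; once established, the local equalities glue to the global formula on $T^*(X,D)$.
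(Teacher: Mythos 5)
Your overall strategy (convert the log module into a relative $\shD$-module and invoke the Sabbah/Brian\c{c}on--Maisonobe--Merle relative characteristic variety machinery) is in the same spirit as the paper, which also ultimately rests on Lemma \ref{lm:BMM}. But your specific route has a genuine gap precisely at the step you label ``cycle-level bookkeeping'', and that step is where the entire content of the theorem sits. First, the passage from $\CC_{\widetilde X/\C^r}(R_U\cM(*D))$ to its ``central fiber'' over $\vec 0\in\C^r$ is not available for $r\ge 2$: restricting a relative characteristic cycle to a fiber requires the hypotheses of Proposition \ref{prop:spnotorsioncc}, iterated through $r$ hyperplanes, which in practice needs flatness of the Rees module over the base (cf.\ Proposition \ref{prop:flatrelhol}). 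The paper explicitly stresses that $R_U\cM$ is only torsion-free, not flat, over $\C^r$, that $\overline{q^{*}(\CC(\cM))}$ is a relative conormal space but not relative Lagrangian when $r\ge 2$ (\S\ref{subsec:relccU}, Remark \ref{rmk:nabmkm}), and Example \ref{ex:notrelhollogres} shows how badly fibers over the deepest stratum can behave. So the fiber of $\overline{q^{*}(\CC(\cM(*D)))}$ over $\vec 0$ may have excess components and your specialization step has no control over it.

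Second, even granting such a restriction, the proposed mechanism ``extract the weight-zero part of the central fiber to recover $\gr^\Phi\bar\cM$'' conflates two different gradings. The central fiber of $R_U\cM(*D)$ is the multigraded module $\gr^U_\bullet\cM(*D)$, graded by the deformation parameters $u_j$; its weight-zero piece is the quotient $U_{\vec 0}/\sum_j U_{-\bee_j}$, not $\bar\cM$ itself, whereas $\CC_{\log}(\bar\cM)$ is computed from a good filtration over the \emph{order} filtration $F_\bullet\shD_{X,D}$, i.e.\ from the symbol grading on $T^*(X,D)$, which the $u$-grading does not see; the coordinate identity $\tilde x_j\tilde\xi_j=x_j\xi_j$ does not supply the needed module-level comparison. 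Moreover Theorem \ref{thm:CCrelRU} produces the closure of $q^{*}\CC(\cM(*D))$, which contains conormal components over $D$, and your claim that the weight-zero/central-fiber operation discards exactly those while keeping $\overline{\CC(\cM|_U)}$ with the right multiplicities is essentially a restatement of the theorem to be proved. The paper avoids both problems by using a different relative model: the log rescaled family $p\colon(x,t,y)\mapsto(x,e^{y}t)$, relative over $\D^r_t$, so that Lemma \ref{lm:BMM} computes $\Ch_\rel(p^*\bar\cM)$ directly (no specialization of a multi-parameter family at the deepest stratum is ever needed), and then $\Ch_{\log}(\bar\cM)$ is recovered from $\Ch_\rel$ via the relations $t_i\partial_{t_i}-\partial_{y_i}$ (Proposition \ref{prop:logrelsp}) together with the explicit closure computation of Lemma \ref{lm:unpackagelogcl}. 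To make your argument work you would need a substitute for this bridge between the relative picture and $T^*(X,D)$; as written, it is missing.
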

Ginsburg's proof of the above theorem under the algebraic setting in \cite[Appendix A]{Gil1} uses microlocalization of $\shD$-modules and resolution of singularities. Our proof under the analytic setting relies on converting log $\shD$-modules to relative $\shD$-modules, with some ideas due to Sabbah and Briancon-Maisonobe-Merle. 

Theorem \ref{thm:CClogC} has a long history. A characteristic variety formula of the holonomic system for $\prod_{l=1}^N(f_l+\sqrt{-1}{O})^{\lambda_l}$ first appears in \cite{KK79}. For the lattice $\shD_X[\bs]({\bf f}^\bs\cdot m)$ given by the graph embedding construction (see \S\ref{sec:gemMal}), the formula for characteristic varieties is proved by  Briancon-Maisonobe-Merle  \cite[Th\'eor\`em 2.1]{BMM} and the characteristic cycle formula in this case is obtained in \cite{BVWZ2}. Ginsburg \cite{Gil1} made it in its most general form as in Theorem \ref{thm:CClogC} under the algebraic setting. See \cite{KasBf,Gil,Gil1,BVWZ2,Mai,Maihyp} for applications of Theorem \ref{thm:CClogC}. See also \cite{Callog,Cal20} for related applications.

In a follow-up paper, Theorem  \ref{thm:CClogC} in its general form is used to obtain the conclusion that the zero loci of Bernstein-Sato ideals for regular holonomic $\shD$-modules in general are always of codimension-one \cite[Theorem 3.11]{WuRHA}. This conclusion in turn plays an important role in the establishment of the Riemann-Hilbert correspondence for Alexander complexes (see \cite[\S 3]{WuRHA}).


The following example shows that regularity in both Theorem \ref{thm:CClogC} and Theorem \ref{thm:relccL} is needed.
\begin{example}
We consider $\cM=\C[t,1/t]\cdot e^{1/t}$, the algebraic irregular holonomic $\shD_X$-module generated by the function $e^{1/t}$ for $X=\C$, where $t$ is the coordinate of the complex plane $\C$. Let $H$ be the divisor $\{0\}\subseteq X$. Since 
$$t\partial_t\cdot e^{1/t}=-e^{1/t}/t,$$
$\cM$ is coherent over $\shD_{X,H}$. Hence, $\cM$ is a $\shD_{X,H}$-lattice of itself and its $V$-filtration along $H$ is the trivial filtration with $\gr^V_\bullet\cM=0.$
Moreover, since
$$(1+t^2\partial_t)\cdot e^{1/t}=0,$$
one can see that $\Ch_{\log}(\cM)$ has a component over $H$.
\end{example}

\subsection{Key ideas in proving main results}
For Theorem \ref{thm:constrlattice}, we first use direct images of log $\shD_X$-modules (see \S\ref{subsec:dimagelogD}) under the graph embedding of the defining functions of the normal crossing divisor locally to reduce to the case for lattices  $\shD_{X}[\bs](\bff^\bs\cdot \cM_0)$.  The lattices $\shD_{X}[\bs](\bff^\bs\cdot \cM_0)$ can be treated as  relative $\shD$-modules over $\C[\bs]$ with independent parameters
$$\bs=(s_1,s_2,\dots,s_r).$$
We then use the fact that $\shD_{X}[\bs](\bff^\bs\cdot \cM_0)$ is indeed relative holonomic, see Theorem \ref{thm:mairelhol}, which generalizes a relative holonomicity result of Maisonobe \cite{Mai}. We then identify the log de Rham complex of $\shD_{X}[\bs](\bff^\bs\cdot \cM_0)$ as the fiber of the relative de Rham complexes over the origin ${\mathbf 0}\in \C^r=\Spec\C[\bs]$. Finally, Theorem \ref{thm:constrlattice} follows from the relative holonomicity result and Kashiwara's constructibility theorem for complexes of $\shD$-modules with holonomic cohomologies. To prove Theorem \ref{thm:j*j!DR}, we reduce the required perversity to the flatness of the twisted lattices
\[\shD_{X}[\bs](\bff^\bs\cdot \cM_0(kD))\]
over a small (analytic) neighborhood of ${\mathbf 0}\in \C^r$ for $|k|\gg0$ by applying Sabbah's generalized $b$-functions.

The key point of the proof of 
Theorem \ref{thm:CClogC} is to interpret log $\shD$-modules as certain relative $\shD$-modules. 
More precisely, we use what we call the log rescaled families (locally) to convert lattices to relative $\shD$-modules over the log factor. See \S\ref{subsec:logtorel} for constructions. Finally we apply a relative characteristic variety formula of Sabbah and Briancon-Maisonobe-Merle (see Lemma \ref{lm:BMM}). 

\subsection{Relations to singularities in algebraic geometry} We first clarify the Bernsten-Sato polynomials (or $b$-functions) in this paper and in the literature.

The $b$-functions in Definition \ref{def:fltLV} are the natural generalization of the $b$-functions for the usual $V$-filtrations (see \S\ref{subsec:KMVsp} and also \cite[\S III.7]{Bj}). For a holomorphic function $f$ and for the lattices  $\shD_X[s](f^s\cdot\cM_0)$ (that is, $r=1$), the associated $b$-function is the monic polynomial $b(s)\in\C[s]$ of the least degree such that
\[b(s)\cdot \frac{\shD_X[s](f^s\cdot\cM_0)}{\shD_X[s](f^{s+1}\cdot\cM_0)}=0,\]
where $\cM_0$ is an $\sO_X$-coherent submodule of a holonomic $\shD_X$-module. 
See \cite[III.2 and VI.1]{Bj} for this case. If $\cM_0=\sO_X$, the above definition gives particularly the $b$-function for $f$. For the lattice $\shD_{X}[\bs](\bff^\bs\cdot \cM_0)$ in general, since $s_i$ is identified with $-t_i\partial_{t_i}$,
Theorem \ref{thm:mibfs} gives a polynomial $b(\bs)\in\C[\bs]$ such that 
\[b(\bs)\cdot \frac{\shD_X[\bs](
\bff^\bs\cdot\cM_0)}{\shD_X[\bs](\bff^{\bs+\vec1}\cdot\cM_0)}=0,\]
with $\vec 1=(1,1,\dots,1)$. Such $b(\bs)$ are what we mean by Sabbah's generalized $b$-functions. This can be further generalized to the definition of the Bernstein-Sato ideal $B_\bff$ of $\bff$ (see for instance \cite{Budur}). 

Now we discuss the relations in between $V$-filtrations (and/or $b$-functions) and singularities in algebraic geometry. The $b$-function of $f$ provides a useful tool to study singularities of the divisor of $f$, see for instance \cite{KasBf,ELSV,BSaito,Kolpair,Ste}. For multiple functions, Budur, Musta\c{t}\v{a} and Saito \cite{BMS} defined the Bernstein-Sato polynomials (or $b$-functions) for arbitrary schemes in $X=\bbC^n$ by considering the $V$-filtration along smooth subvarieties (see Definition \ref{def:fltV}).\footnote{The $V$-filtration in \emph{loc. cit.} is the $\Q$-indexed one. One can refine the $\Z$-index to the $\Q$-index by a standard procedure using $b$-functions.} More precisely, they considered the $V$-filtration and $b$-functions along the slope $L=(1,1,\dots,1)$ for the holonomic module $\iota_{\bff+}\sO_X$, where $\bff=(f_1,\dots,f_r)$ generate the ideal of an affine scheme. They can reinterpret the log-canonical threshold as well as other jumping coefficients of the multiplier ideals of the scheme \cite{Laz} as roots of the $b$-function of the $V$-filtration (\cite[Theorem 2]{BMS}). By Theorem \ref{thm:mibfs}, one can see that if $L=(1,\dots,1)$ is not a slope of an adapted cone of certain good $\Z^r$-filtration on $\iota_{\bff+}\sO_X$, then the $b$-function of the scheme is irrelevant to the generalized $b$-function of Sabbah. 
In fact, this can be further refined in terms of Bernstein-Sato ideals with the help of a result of Maisonobe. More precisely, by \cite[R\'esultat 4]{Mai}, if $L=(1,\dots,1)$ is not a slope of the codimension one components of the zero locus of $B_\bff$, then the $b$-function of the scheme defined by $\bff$ is irrelevant to the Bernstein-Sato ideal of $\bff$. 

By Theorem \ref{thm:relccL}, one can now consider the $V$-filtration and the $b$-function of $\iota_{\bff+}\sO_X$ along an arbitrary slope $L$. It would be interesting to know whether there exist algebro-geometric interpretations of the jumping indices of the $V$-filtration and the roots of the $b$-function of $\iota_{\bff+}\sO_X$ along $L$. 

Musta\c{t}\v{a} and Popa \cite{MP1,MP2,MP3} defined Hodge ideals (see also \cite{Saitohi}) by considering the Hodge filtration of the $\shD$-module $\sO_X(*D)$ using mixed Hodge modules. It is then natural to ask whether there exist connections between $V$-filtrations of $\iota_{\bff+}\sO_X(*D)$ along $L$ and Hodge ideals of $D$, where $\prod_i f_i=0$ defines the divisor $D$.

\subsection{Outline}In \S\ref{sec:reldmodule}, we discuss the general theory of relative $\shD$-modules. Then we discuss log $\shD$-modules and the proof of Theorem \ref{thm:CClogC} in \S\ref{sec:logdmodule}. \S\ref{sec:vfilreld} is about the generalized $V$-filtrations and their relations with relative $\shD$-modules. Most of the results in \S\ref{sec:vfilreld} are essentially due to Sabbah. We give a down-to-earth exposition in \S\ref{sec:vfilreld}, for the reason that the beautiful theory of multi-indexed filtrations of Sabbah seems to be not widely known. Also, the construction of $V$-filtrations along arbitrary slopes seems to be missing in the literature, to the best of our knowledge.  For instance, as mentioned above only the $V$-filtration along the slope $L=(1,\dots,1)$ was studied in \cite{BMS}.
Finally, we recall the graph construction of Malgrange and prove the constructibility of log de Rham complexes in \S\ref{sec:gemMal}. 

\subsection{Convention} Throughout this paper, we discuss sheaves of modules on either algebraic or analytic spaces (or both). If the underlying space is algebraic (resp. analytic), then the sheaves of modules on it are all assumed to be algebraic (resp. analytic). But, when we discuss constructible complexes (or perverse sheaves) on a complex algebraic variety, we always use the Euclidean topology. If $f:X\to Y$ is a morphism of (algebraic or analytic) schemes, we use $f^{-1}$ and $f_*$ to denote the sheaf-theoretical inverse and direct image functors respectively.  

\subsection*{Acknowledgement}
The author thanks Peng Zhou and Nero Budur for useful discussions, Claude Sabbah for answering questions and Yajnaseni Dutta and Ruijie Yang for useful comments. 

\section{Relative $\shD$-modules}\label{sec:reldmodule}
  
\subsection{Relative characteristic cycles}\label{subsec:relchcc}
We recall the theory of $\shD$-modules under the relative setting. We mainly follow \cite[Chapter III. 1.3]{Schpbook}.
Suppose that $\varphi\colon \mathcal X\to \cS$ is a smooth morphism (i.e. $d\varphi$ is surjective everywhere) of complex smooth algebraic varieties (or complex manifolds). We write by $\shTA_{
\cX/
\cS}$ the sheaf of vector fields tangent to the leaves of $\phi$. We then have an inclusion 
\[\shTA_{\cX/\cS}\hookrightarrow \shTA_{\cX}.\]
Then the sheaf of rings of relative differential operators associated to $\varphi$ is defined to be the subalgebra 
\[\shD_\varphi=\shD_{\mathcal X/\cS}\subseteq \shD_X\] 
generated by $\shTA_{X/Y}$ and $\sO_X$. Similar to the absolute case, $\shD_{\cX/\cS}$ is a coherent and noetherian sheaf of rings. Modules over $\shD_{\mathcal X/\cS}$ are called relative $\shD$-modules over $\cS$. We also write by $\Omega^1_{\mathcal X/\mathcal S}$ the relative cotangent sheaf which is defined to be  the $\sO$-dual of $\shTA_{\mathcal X/\mathcal S}$. Since $\shD_{\cX/\cS}$ is not commutative, we have both right and left $\shD_{\cX,\cS}$-modules and the side-change operator is given by tensoring $\omega_{\cX/\cS}\coloneqq \wedge^m\Omega^1_{\mathcal X/\mathcal S}$ with its quasi-inverse by tensoring $\omega^{-1}_{\cX/\cS}$, where $m=\dim \cX-\dim \cS$.

Since $\varphi$ is smooth, we have a short exact sequence of cotangent bundles 
\[0\to \mathcal X\times_\cS T^*\cS \to T^*X\to T^*(\mathcal X/S)\to 0.\]
The filtration $F_\bullet\shD_X$ by the orders of differential operators induces on $\shD_{\cX/\cS}$ the order filtration $F_\bullet\shD_{\mathcal X/S}$. Then the associated graded sheaf of rings $\gr_\bullet^F\shD_{\mathcal X/S}$ gives the algebraic structure sheaf of $T^*(\mathcal X/S)$ by the $\sim$-functor. 
In the analytic case, $\sO_{T^*(\mathcal X/S)}$ is a faithfully flat ring extension of $\widetilde\gr_\bullet^F\shD_{\mathcal X/S}$ by GAGA.

For a coherent $\shD_{\mathcal X/\cS}$-module $\mathscr M$, a filtration $F_\bullet\sM$ over $F_\bullet\shD_{\cX/\cS}$ is called \emph{good} if $\grf\sM$ is coherent over $\grf\shD_{\cX/\cS}$. Conversely, if there exists a filtration $F_\bullet\sM$ satisfying that   $\grf\sM$ is coherent over $\grf\shD_{\cX/\cS}$, then $\sM$ is coherent over $\shD_{\cX/\cS}$. 
Good filtrations for coherent $\shD_{\mathcal X/\cS}$-modules exist locally in the analytic category and globally in the algebraic category.
We define the relative characteristic variety by 
\[\Ch_{\rel}(\mathscr M)=\Ch_{\cX/\cS}(\sM)\coloneqq \supp(\widetilde\gr^F_\bullet\sM)\subseteq T^*(\mathcal X/\cS),\]
where we use the $\sim$-functor of the affine morphism 
$$\pi: T^*(\mathcal X/\cS)\to X.$$ 
By construction, $\Ch_\rel(\sM)$ is a conic subvariety of $T^*(
\cX/\cS)$, where ``conic" means that it is invariant under the natural $\C^\star$-action induced by the grading on $\gr^F_\bullet\shD_{\cX/\cS}$. Each irreducible components of $\Ch_{\rel}(\mathscr M)$ has a multiplicity. Similar to the absolute case, $\Ch_{\rel}(\mathscr M)$ and the multiplicities are independent of good filtrations. Then the relative characteristic cycle, denoted by $\CC_{\rel}(\mathscr M)$ is the associated locally finite cycles of $\Ch_{\rel}(\mathscr M)$ with multiplicities. If $\varphi$ is an algebraic smooth morphism between smooth varieties over $\bbC$, then $\CC_{\rel}(\mathscr M)$ is an algebraic cycle inside the algebraic relative cotangent bundle $T^*(\cX/\cS)$. 

Similar to the absolute case, for a relative differential operator $P\in \shD_{\cX/\cS}$ of order $k$, we can define its principal symbol, which gives a section of homogeneous degree $k$ in $\gr^F_\bullet \shD_{\cX/\cS}$. By \cite[3.24 Definition]{Bj}, we obtain the relative Poisson bracket on  $\gr^F_\bullet \shD_{\cX/\cS}$ and hence the relative Poisson bracket on $\sO_{T^*(\cX/\cS)}$ (by faithful flatness). A subvariety of $T^*(\cX/\cS)$ is called (relative) \emph{involutive} if its radical ideal sheaf is closed under the Poisson bracket. Then by Gabber's involutive theorem (see \cite[Appendix III, 3.25 Theorem]{Bj}), we obtain:
\begin{theorem}[Gabber's Involutivity]\label{thm:Ginv}
Suppose that $\sM$ is a coherent $\shD_{\cX/\cS}$-module $($left or right$)$. Then $\Ch_\rel(
\sM)$ is $($relative$)$ involutive.
\end{theorem}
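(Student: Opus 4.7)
The plan is to reduce the statement to the abstract filtered-ring form of Gabber's involutivity theorem recorded in \cite[Appendix III, 3.25 Theorem]{Bj}. All the structural ingredients required for that theorem have already been assembled in the discussion preceding the statement: the order filtration makes $\shD_{\cX/\cS}$ a positively filtered, locally Noetherian sheaf of rings whose associated graded $\gr^F_\bullet \shD_{\cX/\cS}$ is commutative Noetherian and, via the $\sim$-functor, models $\sO_{T^*(\cX/\cS)}$; moreover, the commutator on $\shD_{\cX/\cS}$ descends to a Poisson bracket on $\gr^F_\bullet \shD_{\cX/\cS}$ of homogeneous degree $-1$ with respect to the order grading.

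First I would work locally on $\cX$: in the analytic setting on a Stein open where a good filtration $F_\bullet \sM$ exists, and in the algebraic setting directly on $\cX$. Let $\cJ \subseteq \gr^F_\bullet \shD_{\cX/\cS}$ denote the annihilator ideal sheaf of $\gr^F_\bullet\sM$, so that $\Ch_\rel(\sM) = \supp(\widetilde{\gr^F_\bullet \sM})$ is cut out, up to radicals, by the image of $\cJ$ in $\sO_{T^*(\cX/\cS)}$. The abstract Gabber theorem, applied to the filtered sheaf of rings $\shD_{\cX/\cS}$ together with the good filtration $F_\bullet\sM$, then shows that $\sqrt{\cJ}$ is closed under the Poisson bracket on $\gr^F_\bullet \shD_{\cX/\cS}$. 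This yields the conclusion directly in the algebraic case; in the analytic case one transports it across the faithfully flat extension from $\widetilde\gr^F_\bullet \shD_{\cX/\cS}$ to $\sO_{T^*(\cX/\cS)}$, which preserves both radicals of coherent ideal sheaves and the Poisson bracket. The right-module case follows by applying the side-change operator $\omega_{\cX/\cS}\otimes_{\sO_\cX}(-)$, which interchanges left and right $\shD_{\cX/\cS}$-module structures without altering $\Ch_\rel$.

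The only point requiring actual checking, as opposed to being automatic, is that the induced Poisson bracket really lands in $\gr^F_\bullet \shD_{\cX/\cS}$ rather than only in the ambient $\gr^F_\bullet \shD_{\cX}$. Given $P \in F_k \shD_{\cX/\cS}$ and $Q \in F_\ell\shD_{\cX/\cS}$, one needs $[P,Q] \in F_{k+\ell-1}\shD_{\cX/\cS}$; this is immediate from $\shD_{\cX/\cS}$ being a subalgebra of $\shD_{\cX}$ together with the standard order estimate $[F_k,F_\ell]\subseteq F_{k+\ell-1}$ valid in $\shD_{\cX}$. Once this compatibility is in place Gabber's argument transfers verbatim, since it uses only the filtered-ring and Poisson-bracket inputs; there is no extra obstacle specific to the relative setting, and the \emph{only} non-formal content of the theorem has already been absorbed into the construction of the relative Poisson structure discussed above.
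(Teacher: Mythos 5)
Your proposal is correct and matches the paper's route: the paper likewise treats the statement as an immediate consequence of Gabber's abstract involutivity theorem from \cite[Appendix III, 3.25 Theorem]{Bj}, applied to the filtered ring $\shD_{\cX/\cS}$ with a good filtration on $\sM$, after constructing the relative Poisson bracket (via \cite[3.24 Definition]{Bj}) and passing to $\sO_{T^*(\cX/\cS)}$ by faithful flatness in the analytic case. Your extra check that $[F_k\shD_{\cX/\cS},F_\ell\shD_{\cX/\cS}]\subseteq F_{k+\ell-1}\shD_{\cX/\cS}$ is exactly the compatibility the paper absorbs into its definition of the relative principal symbol and bracket, so there is nothing genuinely different here.
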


Notice that the fibers of a relative involutive subvariety $\cZ\subseteq T^*(\cX/\cS)$ are not necessarily involutive. One reason is that the intersections 
\[\cZ_s=\cZ\cap T^*(\cX/\cS)_s\subseteq T^* \cX_s\]
are not always proper intersections for $s\in \cS$. If additionally $\cZ$ is smooth over $\cS$, then one can easily check that $\cZ_s\subseteq T^*\cX_s$ is either empty or involutive.  
However, we have the following relative Bernstein inequality:
\begin{theorem}[Relative Bernstein Inequality of Maisonobe]\label{thm:rbernsteinM}
Suppose that $\sM$ is a coherent $\shD_{\cX/\cS}$-module $($left or right$)$. If $\Ch_\rel(
\sM)_s$ is not empty for $s\in\cS$, then all the irreducible components of $\Ch_\rel(
\sM)_s$ are of dimension $\ge \dim\cX-\dim\cS$.
\end{theorem}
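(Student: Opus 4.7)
My plan is to deduce this from Gabber's relative involutivity (Theorem~\ref{thm:Ginv}): I want to show that each nonempty fibre $\Ch_\rel(\sM)_s$ is involutive in $T^*\cX_s$ in the ordinary symplectic sense, and then conclude via the classical fact that every irreducible component of an involutive (coisotropic) subvariety of a symplectic manifold of dimension $2m$ has dimension at least $m$. Here $T^*\cX_s$ has dimension $2(\dim\cX-\dim\cS)$, so this yields the asserted bound $\dim\ge\dim\cX-\dim\cS$.

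The key geometric compatibility is that the relative Poisson bracket on $\sO_{T^*(\cX/\cS)}$ restricts fibre-wise to the symplectic Poisson bracket on each $T^*\cX_s$. In local coordinates $(s_j)$ on $\cS$ and $(x_i,\xi_i)$ along the fibres of $T^*(\cX/\cS)\to\cS$, the relative bracket is $\{f,g\}_\rel=\sum_i(\partial_{\xi_i}f\,\partial_{x_i}g-\partial_{x_i}f\,\partial_{\xi_i}g)$, with no $\partial_{s_j}$ contributions. It follows that the ideal cutting out $T^*\cX_s$ in $\sO_{T^*(\cX/\cS)}$ is itself a relative Poisson ideal, and that the quotient $\sO_{T^*(\cX/\cS)}\twoheadrightarrow\sO_{T^*\cX_s}$ intertwines $\{\cdot,\cdot\}_\rel$ with the symplectic Poisson bracket on the fibre.

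Setting $V=\Ch_\rel(\sM)$, Theorem~\ref{thm:Ginv} tells us that the radical ideal $I(V)\subseteq\sO_{T^*(\cX/\cS)}$ is closed under $\{\cdot,\cdot\}_\rel$, hence its image $\bar I$ in $\sO_{T^*\cX_s}$ is closed under the fibre Poisson bracket. The defining ideal of the fibre $V_s=V\cap T^*\cX_s$ is $I(V_s)=\sqrt{\bar I}$, which may be strictly larger than $\bar I$; so to finish I would invoke the general Gabber radical theorem---the radical of a Poisson ideal in a Noetherian $\C$-algebra (or suitable analytic local ring) of characteristic zero is again Poisson, cf.~\cite[Appendix~III]{Bj}---to upgrade Poisson-closure of $\bar I$ to Poisson-closure of $I(V_s)$. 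This shows $V_s$ is involutive in the symplectic manifold $T^*\cX_s$, and the classical symplectic dimension bound for coisotropic subvarieties then completes the argument.

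The delicate point is precisely the passage from $\bar I$ Poisson-closed to $\sqrt{\bar I}$ Poisson-closed: a naive restriction of a relative-Poisson radical ideal need not stay radical on the fibre, so one must carefully separate the set-theoretic restriction of the characteristic variety from the algebraic operation of taking the radical, and then apply Gabber's radical-is-Poisson statement in the fibre Poisson ring. Once this step is in place, the remainder of the proof is purely symplectic geometry.
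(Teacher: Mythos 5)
Your reduction to fibrewise involutivity stalls exactly at the step you yourself flag as delicate, and that step is a genuine gap rather than a technicality. What Theorem \ref{thm:Ginv} gives you (involutivity as defined in the paper) is $\{I(V),I(V)\}_\rel\subseteq I(V)$, and together with the centrality of functions pulled back from $\cS$ this yields only that the image ideal $\bar I\subseteq \sO_{T^*\cX_s}$ satisfies $\{\bar I,\bar I\}\subseteq \bar I$; it is \emph{not} a Poisson ideal in the strong sense $\{\bar I,\sO_{T^*\cX_s}\}\subseteq\bar I$. The radical statement you invoke (Seidenberg/Gabber type: in characteristic zero the radical of a Poisson ideal is Poisson) concerns the strong sense and does not apply. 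For ideals that are merely bracket-closed in the coisotropic sense, taking radicals can fail outright: in $\C[x,\xi]$ with $\{\xi,x\}=1$, the ideal $I=(x,\xi)^2$ satisfies $\{I,I\}\subseteq I$, yet $\sqrt I=(x,\xi)$ contains $x,\xi$ with $\{x,\xi\}=-1\notin\sqrt I$. Gabber's involutivity theorem is precisely the non-formal tool that controls radicals, but it applies to the radical of the annihilator of the associated graded of a good filtration on a coherent module over a suitable filtered ring; $\bar I$ is not presented in that form (the natural candidate, $\gr$ of $i_s^*\sM$ with the induced filtration, does not have $\bar I$ as its graded annihilator, and $i_s^*$ creates torsion/Tor issues), so you cannot simply ``apply Gabber again in the fibre.'' Worse, the intermediate statement you are aiming at --- that every fibre $\Ch_\rel(\sM)_s$ is involutive in $T^*\cX_s$ --- is exactly what the paper warns is false in general for relative involutive subvarieties (the intersection with $T^*\cX_s$ need not be proper), so no repair confined to the radical step can rescue this route.

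The paper's proof deliberately avoids claiming fibrewise involutivity at all parameters: near a smooth point of $\Ch_\rel(\sM)$ it uses generic smoothness (Morse--Sard in the analytic case) to arrange that $\Ch_\rel(\sM)\cap W$ is smooth over $\cS$; for a subvariety that is both relatively involutive and smooth over $\cS$, a tangent-space argument does give involutivity of its fibres, hence the bound $\dim\ge\dim\cX-\dim\cS$ for those fibres; the bound at the remaining parameters then follows from upper semicontinuity of the fibre dimension of $\Ch_\rel(\sM)$. To salvage your approach you would need either that smoothness-over-$\cS$ input (which is the paper's route) or a genuinely new replacement for the radical step; the purely ideal-theoretic passage from $\bar I$ to $\sqrt{\bar I}$ that you propose is not available.
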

\begin{proof}
The proof is essentially the same as that of \cite[Proposition 5]{Mai}, where the author only discussed relative $\shD$-modules over $\cS=\bbC^r$. For completeness, we sketch the proof in general. We take a smooth point of $\Ch_\rel(
\sM)$ and focus on an open neighborhood $W$ around it. By generic smoothness (or Morse-Sard Theorem for critical values in the analytic case), $\Ch_\rel(
\sM)\cap W$ is smooth over an open neighborhood $U$ of $\cS$ (shrink $W$ if necessary). Then the relative involutivity in Theorem \ref{thm:Ginv} and the relative smoothness imply that $(\Ch_\rel(
\sM)\cap W)\cap T^*\cX_s$ is involutive in $T^*\cX_s$ for $s\in\cS$. In particular, the dimension of 
$$(\Ch_\rel(
\sM)\cap W)\cap T^*\cX_s$$ 
is $\ge \dim\cX-\dim\cS$. Therefore, the required statement follows from the upper semicontinuity of the dimension of fibers of $\Ch_\rel(\sM)$.      
\end{proof}

The following lemma is the relative analogue of \cite[Proposition 2.10]{Kasbook}. We leave its proof for interested readers. See also \cite[Lemma 3.2.2]{BVWZ}. 
\begin{lemma}\label{lm:unionchrel}
If 
\[0\to \sM'\longrightarrow \sM\longrightarrow\sM''\to 0\]
is a short exact sequence of coherent $\shD_{\cX/\cS}$-modules, then 
\[\Ch_\rel(\sM)=\Ch_\rel(\sM')\cup \Ch_\rel(\sM'').\]
\end{lemma}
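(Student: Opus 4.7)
The claimed equality is one of closed conic subsets of $T^*(\cX/\cS)$, hence local on $\cX$. I restrict to an open subset $W \subseteq \cX$ on which $\sM$ admits a good filtration $F_\bullet\sM$ over $F_\bullet\shD_{\cX/\cS}$; such $W$ cover $\cX$ in the analytic case, and one may take $W=\cX$ in the algebraic case. The plan is to induce compatible filtrations on $\sM'$ and $\sM''$ so that the associated graded sequence is exact, and then compare supports in $T^*(\cX/\cS)$.

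Define $F_k\sM' \coloneqq \sM' \cap F_k\sM$ and let $F_k\sM''$ be the image of $F_k\sM$ in $\sM''$. With these choices the filtered sequence is strict, so the associated graded sequence
\[0 \to \gr^F_\bullet\sM' \longrightarrow \gr^F_\bullet\sM \longrightarrow \gr^F_\bullet\sM'' \to 0\]
is exact. Applying the exact $\sim$-functor $\pi_*(\widetilde{-})$ on $T^*(\cX/\cS)$ yields a short exact sequence of coherent $\sO_{T^*(\cX/\cS)}$-modules, whose middle term has support equal to the union of the supports of the outer terms. Since a surjection sends a good filtration to a good filtration, $F_\bullet\sM''$ is good and $\supp(\widetilde{\gr^F_\bullet\sM''}) = \Ch_\rel(\sM'')$. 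Provided the induced filtration on $\sM'$ is also good, the same identification gives $\supp(\widetilde{\gr^F_\bullet\sM'}) = \Ch_\rel(\sM')$, and the lemma follows.

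The one substantive input is therefore that the intersection filtration $F_\bullet\sM'$ is good. This is an Artin--Rees type statement: locally on $W$ the Rees ring $R_F\shD_{\cX/\cS}$ is (left) Noetherian because its associated graded $\gr^F_\bullet\shD_{\cX/\cS}$ is; the Rees module $R_F\sM$ is finitely generated over it by goodness of $F_\bullet\sM$; hence the graded submodule $R_F\sM' \subseteq R_F\sM$ is finitely generated, i.e.\ $F_\bullet\sM'$ is good. I expect this to be the only real obstacle: in the analytic case one must be slightly careful that $W$ is small enough (e.g.\ Stein and relatively compact) for these Noetherianity statements to apply, but such $W$ are precisely those on which good filtrations were assumed to exist. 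This is the relative counterpart of the standard argument for the absolute case cited from Kashiwara's book, and the relative nature of $\shD_{\cX/\cS}$ does not enter in any essential way once good filtrations are in hand.
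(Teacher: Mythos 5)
Your proof is correct and is exactly the argument the paper has in mind: the paper omits the proof, pointing to the absolute case \cite[Proposition 2.10]{Kasbook} and to \cite[Lemma 3.2.2]{BVWZ}, both of which proceed as you do — take a local good filtration on $\sM$, induce the intersection and image filtrations on $\sM'$ and $\sM''$, use strictness to get the exact graded sequence, and reduce goodness of the induced filtration on the submodule to noetherianity of the Rees ring. Nothing in the relative setting changes this, since $\gr^F_\bullet\shD_{\cX/\cS}$ is again a commutative, coherent, noetherian sheaf of rings, so your Artin--Rees step goes through verbatim.
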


Following \cite{Sab2}, we define the relative holonomicity as follows. 
\begin{definition}[Relative holonomicity]\label{def:relholo}
A coherent $\shD_{\cX/\cS}$-module $\sM$ is called relative holonomic over $\cS$ (or $\sO_S$) if its characteristic variety $\Ch_\rel(
\sM)$ is relative Lagrangian, that is, the fiber $\Ch_\rel(
\sM)_s$ is either empty or a (possibly reducible) Lagrangian subvariety in $T^*\cX_s$ for every $s\in \cS$.\footnote{This relative holonomicity is slightly more general than the ones in \cite{Mai, FS17, BVWZ, BVWZ2}, where the latter requires additionally that the relative Lagrangian subvarieties are independent of $s\in \cS=\C^r$.} 
\end{definition}
Following from Lemma \ref{lm:unionchrel} and Theorem \ref{thm:rbernsteinM}, we immediately have:
\begin{coro}\label{cor:relholab}
Relative holonomicity is preserved by subquotients and extensions. In particular, the category of relative holonomic modules is abelian.  
\end{coro}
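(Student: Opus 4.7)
The plan is to translate everything into a fiberwise dimension comparison on $T^*(\cX/\cS)$, then use the Bernstein inequality to force the containment of characteristic cycles to be an equality on irreducible components.

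First, I would unpack the definition: by Theorem \ref{thm:rbernsteinM}, every non-empty fiber $\Ch_\rel(\sN)_s$ of the relative characteristic variety of a coherent $\shD_{\cX/\cS}$-module $\sN$ has all irreducible components of dimension $\ge \dim\cX - \dim\cS$. So relative holonomicity is equivalent to the reverse inequality: every non-empty fiber is equidimensional of dimension exactly $\dim\cX-\dim\cS$, and each irreducible component is Lagrangian in $T^*\cX_s$. Given a short exact sequence $0\to\sM'\to\sM\to\sM''\to 0$, Lemma \ref{lm:unionchrel} applied fiberwise gives
\[
\Ch_\rel(\sM)_s = \Ch_\rel(\sM')_s \cup \Ch_\rel(\sM'')_s \quad \text{for every } s\in\cS.
\]

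For closure under subquotients, I would assume $\sM$ relative holonomic and let $Z$ be an irreducible component of $\Ch_\rel(\sM')_s$ (the argument for $\sM''$ is identical). Then $Z$ lies inside some irreducible component $W$ of $\Ch_\rel(\sM)_s$, which by hypothesis is Lagrangian of dimension $\dim\cX-\dim\cS$. Theorem \ref{thm:rbernsteinM} forces $\dim Z \ge \dim\cX-\dim\cS = \dim W$, so from $Z\subseteq W$ we conclude $Z=W$. Thus $\Ch_\rel(\sM')_s$ is a union of Lagrangian components of $\Ch_\rel(\sM)_s$, proving relative holonomicity of $\sM'$; the same applies to $\sM''$. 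For closure under extensions, if $\sM'$ and $\sM''$ are relative holonomic, then each irreducible component of $\Ch_\rel(\sM)_s = \Ch_\rel(\sM')_s\cup\Ch_\rel(\sM'')_s$ already appears as a Lagrangian component on one side or the other, so $\sM$ is relative holonomic.

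For the abelian statement, I would note that the ambient category of coherent $\shD_{\cX/\cS}$-modules is abelian since $\shD_{\cX/\cS}$ is coherent and noetherian, so kernels, images and cokernels of morphisms of relative holonomic modules are a priori coherent; they are subquotients of relative holonomic modules, hence relative holonomic by the previous paragraph, making the full subcategory of relative holonomic modules abelian.

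The main (and essentially only) obstacle is the conceptual one of ensuring that the Lagrangian property of fibers passes to subvarieties satisfying the Bernstein dimension bound; once the inclusion of an irreducible component is forced to be an equality by dimensions, the rest is bookkeeping on irreducible components, and no further input beyond Lemma \ref{lm:unionchrel} and Theorem \ref{thm:rbernsteinM} is needed.
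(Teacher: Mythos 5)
Your proposal is correct and follows exactly the route the paper intends: the paper derives the corollary "immediately" from Lemma \ref{lm:unionchrel} and Theorem \ref{thm:rbernsteinM}, and your argument simply spells out that derivation (fiberwise union of characteristic varieties, then the Bernstein dimension bound forcing each component of the sub/quotient's fiber to coincide with a Lagrangian component of the ambient fiber). No gaps; the bookkeeping on coherence of subquotients for the abelian-category statement is also fine.
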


\subsection{Base change for relative $\shD$-modules}\label{subsec:bashchangerelD}
We now discuss the base change for relative $\shD_X$-modules. Suppose we have the following commutative diagram,
\be\label{diag:basechanges1}
\begin{tikzcd}
\cX_\cT\arrow[r,"\mu"]\arrow[d]& \cX\arrow[d]\\
\cT\arrow[r,"\nu"]& \cS
\end{tikzcd}
\ee
so that $\cX_\cT=\cX\times_\cS\cT$. Suppose $\sM$ is a (left) $\shD_{\cX/\cS}$-module. We consider the $\sO$-pullback through $\mu$:
\[\mu^*(\sM)=\sO_{\cX_\cT}\otimes_{\mu^{-1}\sO_{\cX}}\mu^{-1}\sM.\]
Since $\mu^*{\shD_{\cX/\cS}}=\shD_{\cX_{\cT}/\cT}$, $\mu^*(\sM)$ is naturally a relative $\shD$-module over $\cT$. We then have the derived pullback functor $\bL\mu^{*}$ for relative $\shD$-modules. When the relative $\shD$-module structure is forgotten, it is exactly the derived $\sO$-module pullback functor. One can easily see that the derived functor $\bL\mu^*$ for relative $\shD$-module preserves coherence, thanks to the identification $\mu^*{\shD_{\cX/\cS}}=\shD_{\cX_{\cT}/\cT}$ again.

We write by $i_s\colon \cX_s\hookrightarrow \cX$ the closed embedding of the fiber over $s\in \cS$. A relative holonomic $\shD_{\cX/\cS}$-module $\cM$ is \emph{regular} if $\bL i^*_s(\cM)$ is a complex of $\shD_{\cX_s}$-modules with regular holonomic cohomology sheaves for every $s\in\cS$. The author is told by C. Sabbah that it is not known whether the category of regular relative holonomic $\shD$-module is closed by taking subquotients.   

For a closed subvariety $\cZ\subseteq \cS$, we denote by $i_\cZ\colon \cX_\cZ\hookrightarrow \cX$ the closed embedding with $\cX_\cZ=\cX\times_\cS \cZ$.
\begin{lemma}\label{lm:spsmsubvar}
If $\sM$ is relative holonomic over $\cS$ and $\cZ$ is a smooth subvariety, then $\bL i_\cZ^*(\cM)$ is a complex of relative holonomic cohomology sheaves over $\cZ$. In particular, $\bL^k i_s^*\cM$ is a holonomic $\shD_{\cX_s}$-module for each $k$. 
\end{lemma}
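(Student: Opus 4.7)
The plan is to work locally on $\cS$, reduce the computation of $\bL i_\cZ^*\sM$ to a Koszul resolution, and deduce relative holonomicity of each cohomology sheaf by combining a relative characteristic variety estimate with Maisonobe's relative Bernstein inequality (Theorem \ref{thm:rbernsteinM}). Since the statement is local on $\cS$, I assume $\cZ \subseteq \cS$ is cut out by a regular sequence $g_1,\dots,g_r\in\sO_\cS$. Because $\varphi$ is smooth and therefore flat, the pullbacks $\varphi^*g_1,\dots,\varphi^*g_r$ cut out $\cX_\cZ$ and still form a regular sequence in $\sO_\cX$. Hence the Koszul complex $K_\bullet\coloneqq K_\bullet(\varphi^*g_1,\dots,\varphi^*g_r)$ is a finite locally free resolution of $\sO_{\cX_\cZ}$ over $\sO_\cX$, giving $\bL i_\cZ^*\sM \simeq K_\bullet\otimes_{\sO_\cX}\sM$. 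Each $\varphi^*g_i$ is pulled back from $\cS$, so it is central in $\shD_{\cX/\cS}$; thus the differentials of this complex are $\shD_{\cX/\cS}$-linear, and by noetherianity each cohomology sheaf $\cH^k\coloneqq\cH^k(\bL i_\cZ^*\sM)$ is a coherent $\shD_{\cX/\cS}$-module, supported on $\cX_\cZ$ (being annihilated by the $\varphi^*g_i$), and naturally coherent over $\shD_{\cX_\cZ/\cZ}$, in agreement with the $\shD$-module structure from the $\sO$-pullback by $i_\cZ$ recalled in \S\ref{subsec:bashchangerelD}.

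The key step is the characteristic variety estimate
\[ \Ch_\rel(\cH^k)\subseteq \Ch_\rel(\sM)\cap T^*(\cX_\cZ/\cZ), \]
viewed inside $T^*(\cX_\cZ/\cZ)=T^*(\cX/\cS)\times_\cX\cX_\cZ$, i.e., the common zero locus of the $\varphi^*g_i$ as functions on $T^*(\cX/\cS)$. To prove it, I pick a good filtration $F_\bullet\sM$ over $F_\bullet\shD_{\cX/\cS}$. Since each $\varphi^*g_i$ has order zero, multiplication by $\varphi^*g_i$ preserves $F_\bullet\sM$ and induces multiplication by the function $\varphi^*g_i$ on $\grf\sM$ regarded as a sheaf on $T^*(\cX/\cS)$. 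Endowing $K_\bullet\otimes_{\sO_\cX}\sM$ with the termwise filtration and running the spectral sequence of a filtered complex, one produces good filtrations on the $\cH^k$ whose associated gradeds are subquotients of the $k$-th Koszul cohomology of $\grf\sM$ along $(\varphi^*g_1,\dots,\varphi^*g_r)$. The support of this Koszul cohomology lies in $\supp(\grf\sM)\cap\{\varphi^*g_i=0\}=\Ch_\rel(\sM)\cap T^*(\cX_\cZ/\cZ)$, which gives the inclusion.

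To conclude, fix $t\in\cZ$. Since $\cX_t=(\cX_\cZ)_t$ and $T^*(\cX_\cZ/\cZ)_t=T^*\cX_t$, the inclusion above specializes to $\Ch_\rel(\cH^k)_t\subseteq \Ch_\rel(\sM)_t$. The right-hand side is Lagrangian of pure dimension $\dim\cX-\dim\cS$ by the relative holonomicity of $\sM$, while Theorem \ref{thm:rbernsteinM} forces every nonempty irreducible component of $\Ch_\rel(\cH^k)_t$ to have dimension $\geq \dim\cX-\dim\cS$. Each such component is therefore an irreducible component of the Lagrangian $\Ch_\rel(\sM)_t$, so $\Ch_\rel(\cH^k)_t$ is Lagrangian or empty; this proves that $\cH^k(\bL i_\cZ^*\sM)$ is relative holonomic over $\cZ$. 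The ``in particular'' assertion follows by taking $\cZ=\{s\}$, in which case relative holonomicity over a point reduces to ordinary holonomicity. The main obstacle I anticipate is the spectral sequence bookkeeping needed to identify $\grf\cH^k$ with a subquotient of the Koszul cohomology of $\grf\sM$ while preserving goodness of the induced filtration; the argument is standard but must be handled carefully.
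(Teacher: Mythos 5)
Your argument is correct and follows essentially the same route as the paper: the paper's proof also rests on the inclusion $\Ch_\rel(\bL^k i_\cZ^*\sM)\subseteq \Ch_\rel(\sM)\cap \cX_\cZ$ (which it declares obvious) together with the relative Bernstein inequality of Theorem \ref{thm:rbernsteinM} applied fiberwise over $\cZ$. Your Koszul-resolution and filtered spectral sequence bookkeeping simply supplies the details behind the coherence and characteristic-variety containment that the paper leaves unjustified.
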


\begin{proof}
It is obvious that $\bL^k i_\cZ^*(\sM)$ is coherent over $\shD_{\cX_\cZ/\cZ}$ and that $$\Ch_\rel(\bL^k i_\cZ^*(\sM))\subseteq \Ch_\rel(\sM)|_{\cZ}\coloneqq\Ch_\rel(\sM)\cap \cX_\cZ.$$
But $\Ch_\rel(\sM)|_{\cZ}$ is relative Lagrangian over $\cZ$. Therefore,  $\bL^k i_\cZ^*(\sM)$ is relative holonomic by Theorem \ref{thm:rbernsteinM}.
\end{proof}

\begin{prop}\label{prop:spnotorsioncc}
Suppose that $\phi:\cX\to \cS$ is smooth
with $\cH\subset\cS$ a smooth divisor, 
and that $\sM$ is a coherent $\shD_{\cX/\cS}$-module. If $\sM$ has no torsion subsheaf supported on $\cX_\cH$ and if the cycle $\CC_\rel(\sM)$ does not have components over $\cX_\cH$, then 
\[\CC_\rel(i^*_\cH\sM)=\CC_\rel(\sM)|_{\cX_\cH},\]
where $i_\cH:\cX_\cH\hookrightarrow\cX$ is the closed embedding with the fiber product $\cX_\cH=\cX\times_\cS\cH$.
\end{prop}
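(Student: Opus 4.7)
The plan is to reduce the equality of characteristic cycles to a cycle intersection formula on the relative cotangent bundle $T^*(\cX/\cS)$. Let $t\in\sO_\cS$ be a local equation for $\cH$, pulled back via $\phi$ to a function on $\cX$ cutting out $\cX_\cH$. First I will use the hypothesis that $\sM$ has no torsion supported on $\cX_\cH$ to conclude that $t\colon\sM\to\sM$ is injective, giving the short exact sequence
\[0\to\sM\xrightarrow{t}\sM\to i^*_\cH\sM\to 0.\]
In particular $i^*_\cH\sM$ is concentrated in degree zero as a derived pullback and is naturally coherent over $\shD_{\cX_\cH/\cH}$.

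Next I will choose a good filtration $F_\bullet\sM$. Since $t\in F_0\shD_{\cX/\cS}$, the quotient filtration $F_k(i^*_\cH\sM)\coloneqq F_k\sM/(F_k\sM\cap t\sM)$ on $i^*_\cH\sM$ is good. A short Rees module argument — using that $t$ is a non-zero divisor on $R_F\sM=\bigoplus_k F_k\sM\cdot z^k$ by the torsion hypothesis — identifies $\widetilde{\gr^F(i^*_\cH\sM)}$ with $G/tG$, where $G\coloneqq\widetilde{\gr^F\sM}$ as a coherent sheaf on $T^*(\cX/\cS)$. Writing $\CC_\rel(\sM)=\sum_i m_i[Z_i]$, the hypothesis on the characteristic cycle says that no $Z_i$ is contained in the Cartier divisor $V(t)=T^*(\cX_\cH/\cH)\subset T^*(\cX/\cS)$, so $V(t)$ meets each $Z_i$ properly. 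The standard intersection formula for a Cartier divisor against a coherent sheaf then yields
\[[G/tG]=[G]\cdot V(t)=\sum_i m_i\,[Z_i\cdot V(t)]=\CC_\rel(\sM)|_{\cX_\cH},\]
and combining with $\CC_\rel(i^*_\cH\sM)=[G/tG]$ finishes the proof.

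The main obstacle is to rule out unwanted contributions from lower-dimensional or embedded components of $\supp G$ lying inside $V(t)$: a component of $\supp G$ contained in $V(t)$ and of dimension $\dim Z_i-1$ would produce a spurious top-dimensional component of $[G/tG]$. The torsion-freeness of $\sM$ along $\cX_\cH$ is precisely the input that lets us avoid this. Concretely, I will replace $F_\bullet\sM$ by its $t$-saturation, iteratively enlarging $F_{k-1}\sM$ to $\{m\in F_k\sM\colon tm\in F_{k-1}\sM\}$; this process terminates by noetherianity, it preserves goodness of the filtration (the Rees module remains coherent), and it arranges that $t$ acts as a non-zero divisor on $G$. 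With such a $t$-saturated good filtration the intersection formula applies without correction terms, and the identities above compute $\CC_\rel(i^*_\cH\sM)$ cleanly.
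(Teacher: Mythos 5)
Your skeleton matches the paper's up to a point: the short exact sequence $0\to\sM\xrightarrow{t}\sM\to i^*_\cH\sM\to 0$ from torsion-freeness, a good filtration, the induced good filtration on $i^*_\cH\sM$, and a principal-divisor intersection computation on $T^*(\cX/\cS)$. The genuine gap is in how you handle possible $t$-torsion of $G=\widetilde{\gr}^F\sM$. First, your second paragraph is wrong as stated: $t$ being a non-zero divisor on $R_F\sM$ does \emph{not} identify $\widetilde{\gr^F(i^*_\cH\sM)}$ with $G/tG$; one computes $\gr^F_k(i^*_\cH\sM)=F_k\sM/(F_{k-1}\sM+F_k\sM\cap t\sM)$ versus $(G/tG)_k=F_k\sM/(F_{k-1}\sM+tF_k\sM)$, and these agree only when $t$ is a non-zero divisor on $G$ itself, which torsion-freeness of $\sM$ does not give. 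You acknowledge this and propose to fix it by $t$-saturating the filtration, but that is exactly where the proof is incomplete. ``Terminates by noetherianity'' only shows that each individual level $F^{\mathrm{sat}}_k=\{m\in\sM: t^Nm\in F_k\sM \text{ for some } N\}$ stabilizes as a submodule of the coherent module $\sM$; it does not show the limit filtration is \emph{good}. Goodness of $F^{\mathrm{sat}}_\bullet$ amounts, locally, to a uniform bound $t^{N_0}F^{\mathrm{sat}}_k\subseteq F_k$ for all $k$ simultaneously, i.e.\ to coherence of the saturated Rees module, which sits inside the non-coherent $\sM[z]$, so no chain argument over the (noetherian) Rees ring applies. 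The obvious estimate --- the $t$-torsion of $G$ is coherent, hence killed by some $t^{N_0}$ locally --- only yields $t^{N_0(j-k)}m\in F_k\sM$ for $m\in F_j\sM$, with no control as $j\to\infty$; and your argument never uses the hypothesis that $\CC_\rel(\sM)$ has no component over $\cX_\cH$ to produce such a bound. There is no standard quotable lemma producing a good filtration with $t$-torsion-free graded module, so as written the proof rests on an unproved (and nontrivial) claim.

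The paper circumvents this precisely by \emph{not} modifying the filtration: it filters the two-term complex $\sM\xrightarrow{t}\sM$ by an arbitrary good filtration, uses the convergent spectral sequence (citing Laumon and Sabbah) to get the identity $[\gr^F(i^*_\cH\sM)]=[\Coker(\eta)]-[\Ker(\eta)]$ in $K_0$, where $\eta\colon G\xrightarrow{t}G$, and then applies \cite[Appendix IV, 3.13 Proposition]{Bj}, which says at the level of cycles that $[\Coker(\eta)]-[\Ker(\eta)]$ equals the intersection of the support cycle of $G$ with the divisor $V(t)$ provided no component of that cycle lies in $V(t)$. In other words, the possible $t$-torsion of $G$ is allowed to exist and its contributions cancel between kernel and cokernel, so the hypothesis on $\CC_\rel(\sM)$ enters exactly where it should. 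I recommend you either supply a genuine proof that the saturation stays good (this is the hard point), or restructure along the paper's lines: keep your good filtration, and replace your formula $[G/tG]=[G]\cdot V(t)$ by the corrected version with the $[\Ker(t|_G)]$ term, which is the statement actually available without any saturation.
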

\begin{proof}
Since characteristic cycles are local, it is enough to assume $\cH$ defined by a regular (or holomorphic) function $h$. The torsion-free assumption implies that we have a short exact sequence 
\[0\to \sM\xrightarrow{\cdot h} \sM\rightarrow i^*_\cH(\sM)\to 0.\]
Now we pick a good filtration $F_\bullet\sM$ over $\shD_{\cX/\cS}$  and the filtration induces a filtered complex 
\[F_\bullet\sM\xrightarrow{\cdot h} F_\bullet\sM.\]
We then obtain a convergent spectral sequence with the $E^0$-page given by the following length 2 complex
\[\eta\colon\gr^F_\bullet\sM\xrightarrow{\cdot h}\gr^F_\bullet\sM.\]
Then, $i^*_\cH(\sM)$ has an induced filtration $F_\bullet(i^*_\cH(\sM))$ (good over $\shD_{\cX_{\cH}/\cH}$). By convergence of the spectral sequence (see for instance \cite[Lemme 3.5.13]{Laumon} and also \cite[3.7. Lemme]{Sab2}), we have 
\[[\gr^F_\bullet(i^*_\cH(\sM))]=[\Coker(\eta)]-[\Ker(\eta)]\]
in the Grothendieck group $K_0$. Since the characteristic cycle is well defined for $[\gr^F_\bullet(i^*_\cH(\sM))]$, the required statement for characteristic cycles follows from \cite[Appendix IV. 3.13 Proposition]{Bj}.
\end{proof}

Suppose that $\sN$ is a (left)
$\shD_{\cX_{\cT}/\cT}$-module (or more generally a complex). We consider the derived pushforward functor $\bR\mu_*(\sN)$ with $\mu$ as in Diag.\eqref{diag:basechanges1}. Since $\mu^*{\shD_{\cX/\cS}}=\shD_{\cX_{\cT}/\cT}$, we have
\[\bR\mu_*(\sN)\simeq \bR\mu_*(\mu^*(\shD_{\cX/\cS})\otimes_{\shD_{\cX_{\cT}/\cT}}\sN),\]
and hence the complex $\bR\mu_*(\sN)$ is a complex of $\shD_{\cX/\cS}$-modules by adjunction. 
\begin{prop}\label{pro:pfcohrelbasechange}
Suppose that $\nu$ is a proper morphism, $\sN$ is coherent over $\shD_{\cX_{\cT}/\cT}$ $($or more generally a complex of $\shD_{\cX_{\cT}/\cT}$-modules with coherent cohomology sheaves$)$ and $\cN$ $($or each of its cohomology sheaves$)$ admits a good filtration over $F_\bullet\shD_{\cX/\cS}$. Then $\bR^{i}\mu_*(
\sN)$ is coherent over $\shD_{\cX/\cS}$ for each $i\in\Z$.
\end{prop}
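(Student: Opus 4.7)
The plan is to follow the classical template for proper direct image preservation of coherence for $\shD$-modules (as in Houzel--Schapira and Bj\"ork), adapted to the relative setting. The central reduction is that $\gr^F_\bullet \shD_{\cX/\cS}$ is identified via the $\sim$-functor with (a structure sheaf on) the relative cotangent bundle $T^*(\cX/\cS)$, so that coherence statements for relative $\shD$-modules with a good filtration are controlled by Grauert's coherence theorem (or its algebraic analogue) for proper direct images of coherent $\sO$-modules.

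First I would reduce to the case where $\sN$ is a single coherent $\shD_{\cX_\cT/\cT}$-module: for a bounded complex with coherent cohomology sheaves, a standard truncation and hypercohomology spectral sequence argument lets us conclude, once we know coherence for each cohomology sheaf separately, using that coherent $\shD_{\cX/\cS}$-modules form an abelian category stable under extensions.

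With a single module $\sN$ and a good filtration $F_\bullet \sN$ over $F_\bullet \shD_{\cX_\cT/\cT}$ fixed, observe that $\mu$ is the base change of the proper morphism $\nu$ and is therefore itself proper, and that the identity $\mu^* \shD_{\cX/\cS} = \shD_{\cX_\cT/\cT}$ yields the Cartesian square
\[
T^*(\cX_\cT/\cT) \;=\; \cX_\cT \times_{\cX} T^*(\cX/\cS),
\]
so the induced morphism $\bar\mu\colon T^*(\cX_\cT/\cT)\to T^*(\cX/\cS)$ is proper. The graded object $\widetilde{\gr^F_\bullet \sN}$ is then a coherent $\sO_{T^*(\cX_\cT/\cT)}$-module, and Grauert's theorem implies that each $\bR^i \bar\mu_*(\widetilde{\gr^F_\bullet \sN})$ is coherent over $\sO_{T^*(\cX/\cS)}$ and carries a compatible $\Z$-grading coming from the conic structure.

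To finish, I would equip each $\bR^i \mu_*(\sN)$ with the image filtration induced by $\{\bR^i \mu_*(F_k \sN)\}_k$; the filtered-complex spectral sequence then identifies $\gr^F_\bullet \bR^i \mu_*(\sN)$ with a subquotient of $\bR^i \bar\mu_*(\widetilde{\gr^F_\bullet \sN})$, hence with a coherent $\gr^F_\bullet \shD_{\cX/\cS}$-module. This exhibits the image filtration as a good filtration over $F_\bullet \shD_{\cX/\cS}$, forcing $\bR^i\mu_*(\sN)$ to be coherent over $\shD_{\cX/\cS}$. The main technical obstacle is verifying exhaustiveness of the image filtration and the convergence of the spectral sequence in the analytic setting; here properness of $\bar\mu$ and the coherence (hence local boundedness in fiber direction) of $\bR^i \bar\mu_*(\widetilde{\gr^F_\bullet \sN})$ are exactly what is needed to make the argument work locally on $\cX$.
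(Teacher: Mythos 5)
Your proposal is correct in outline, but it takes a genuinely different route from the paper. The paper's proof stays at the level of $\shD$-modules: by the standard d\'evissage of Bj\"ork it reduces to induced modules $\sN=\shD_{\cX_\cT/\cT}\otimes_\sO\sL$ with $\sL$ coherent over $\sO$, and for these the identity $\mu^*\shD_{\cX/\cS}=\shD_{\cX_\cT/\cT}$ plus the projection formula give $\bR\mu_*(\sN)\simeq\shD_{\cX/\cS}\otimes_\sO\bR\mu_*(\sL)$, so Grauert applied to $\mu\colon\cX_\cT\to\cX$ finishes immediately, with no spectral sequence at the symbol level. You instead argue at the level of symbols: push $\widetilde{\gr^F_\bullet\sN}$ along the proper map $T^*(\cX_\cT/\cT)\to T^*(\cX/\cS)$ (proper since it is a base change of $\mu$, itself a base change of the proper $\nu$), apply Grauert there, and transfer coherence back through the image filtration and the filtered spectral sequence; this parallels the technique the paper uses for Proposition \ref{prop:spnotorsioncc} (with the Laumon and Sabbah convergence lemmas) rather than its proof of this proposition. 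What your route buys is a direct treatment of the module together with an explicit good filtration on $\bR^i\mu_*(\sN)$; what it costs is that the analytic-category points you flag are exactly where the content lies and should be carried out: commutation of $\bR^i\mu_*$ with the exhaustive union $\bigcup_k F_k\sN$ (properness gives compact fibers, so cohomology commutes with filtered colimits), local boundedness below of the induced filtration, local stabilization of the spectral sequence at a finite page (noetherianity of $\gr^F_\bullet\shD_{\cX/\cS}$ together with coherence of the $E_1$-page, with the differentials being $\gr^F_\bullet\shD_{\cX/\cS}$-linear), and the compatibility of the $\sim$-functor with $\bR^i\bar\mu_*$ via the affine projections. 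The paper's reduction to induced modules sidesteps these convergence issues, absorbing them into the cited standard procedure, which is also why its stated hypothesis is only the existence of a good filtration.
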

\begin{proof}
The idea of the proof of the required coherence result is similar to that of the case for absolute $\shD$-modules (see for instance \cite[Theorem 2.8.1]{Bj}). We sketch here its proof for completeness. 

By a standard procedure (see for instance the proof of \cite[Theorem 1.5.8]{Bj}), the required statement can be reduced to the case for 
\[\sN=\shD_{\cX_{\cT}/\cT}\otimes_\sO \sL\]
where $\sL$ is a coherent $\sO$-module. But this case follows immediately from the Grauert's direct image theorem for $\sO$-modules and the projection formula (since $\mu^*{\shD_{\cX/\cS}}=\shD_{\cX_{\cT}/\cT}$).
\end{proof}
It is worth mentioning that $\bR\mu_*$ does not preserve relative holonomicity under proper base changes in general (compared to Proposition \ref{prop:relpushfw}). See \S \ref{subsec:relccU} for related examples.

\subsection{Relative de Rham complexes}
We keep notations as in the previous subsection. Let $\sM$ be a (left) $\shD_{\cX/\cS}$-module. The \emph{relative de Rham complex} of $\sM$ is defined as 
\[\DR_{\cX/\cS}(\sM)\coloneqq \omega_{\cX/\cS}\otimes^\bL_{\shD_{\cX/\cS}}\sM.\]
\begin{lemma}\label{lm:cmmpbdr}
We have a natural isomorphism 
\[\bL i^*_s\circ\DR_{\cX/
\cS}\simeq \DR_{\cX_s}\circ \bL i^*_s\]
for $s\in\cS$.
\end{lemma}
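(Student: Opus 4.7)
The plan is to replace both sides by their explicit Koszul--de Rham models and then verify the identification termwise. First, I would invoke the relative Spencer resolution, which exhibits $\omega_{\cX/\cS}$ as quasi-isomorphic to a complex of locally free right $\shD_{\cX/\cS}$-modules built from $\wedge^\bullet\shTA_{\cX/\cS}\otimes_{\sO_\cX}\shD_{\cX/\cS}$. Tensoring over $\shD_{\cX/\cS}$ with $\sM$ then yields the standard explicit representative
\[
\DR_{\cX/\cS}(\sM)\simeq \bigl[\sM\to \Omega^1_{\cX/\cS}\otimes_{\sO_\cX}\sM\to \cdots \to \omega_{\cX/\cS}\otimes_{\sO_\cX}\sM\bigr]
\]
(placed in degrees $[-m,0]$ with $m=\dim\cX-\dim\cS$), whose differential is the Koszul--Leibniz formula determined by the $\shTA_{\cX/\cS}$-action on $\sM$. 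To make $\bL i_s^*$ transparent on this model, I would further choose a flat $\sO_\cX$-resolution $\cP^\bullet\to\sM$: since smoothness of $\varphi$ makes each $\Omega^k_{\cX/\cS}$ a locally free $\sO_\cX$-module, the double complex $\Omega^\bullet_{\cX/\cS}\otimes_{\sO_\cX}\cP^\bullet$ is a representative of $\DR_{\cX/\cS}(\sM)$ all of whose terms are $\sO_\cX$-flat.

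Next, I would apply $\bL i_s^*$ to this double complex, which, by the flatness just noted, reduces to the ordinary $i_s^*$ applied termwise. Two base-change identities then close the loop: $i_s^*\Omega^k_{\cX/\cS}\simeq \Omega^k_{\cX_s}$, which holds because formation of relative differentials commutes with base change along the smooth morphism $\varphi$; and $i_s^*\cP^\bullet$ is an $\sO_{\cX_s}$-flat resolution of $\bL i_s^*\sM$. Combining these gives
\[
\bL i_s^*\DR_{\cX/\cS}(\sM)\simeq \bigl[\Omega^\bullet_{\cX_s}\otimes_{\sO_{\cX_s}} i_s^*\cP^\bullet\bigr][m]\simeq \DR_{\cX_s}(\bL i_s^*\sM),
\]
which is the desired natural quasi-isomorphism.

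The only real point requiring care, and the main bookkeeping obstacle I anticipate, is checking that the pulled-back de Rham differential is precisely the one defining $\DR_{\cX_s}(\bL i_s^*\sM)$. This reduces to observing that the $\shTA_{\cX/\cS}$-action used to build the Koszul--Leibniz differential specializes under $i_s^*$ to the $\shTA_{\cX_s}$-action on $\bL i_s^*\sM$, which in turn is the ring-level compatibility $i_s^*\shD_{\cX/\cS}\simeq \shD_{\cX_s}$ guaranteed by smoothness of $\varphi$. Beyond this naturality verification, no extra input is needed: smoothness ensures every flatness statement required for $\bL i_s^*$ to commute with the $\sO$-factors, and the naturality of the Spencer resolution under smooth base change ensures that the whole construction is functorial in $\sM$ and compatible with the fiberwise construction on $\cX_s$.
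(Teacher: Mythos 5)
Your overall strategy (Koszul--Spencer model, then termwise pullback along $i_s$) is workable and genuinely different from the paper's argument, but as written it has a real gap at the resolution step. You propose to "choose a flat $\sO_\cX$-resolution $\cP^\bullet\to\sM$" and then form "the double complex $\Omega^\bullet_{\cX/\cS}\otimes_{\sO_\cX}\cP^\bullet$". An arbitrary flat $\sO_\cX$-resolution carries no action of $\shTA_{\cX/\cS}$, and its differentials are only $\sO_\cX$-linear, so the horizontal (Koszul--Leibniz) differential you need simply does not exist on this bicomplex; the very naturality check you flag at the end presupposes a $\shD_{\cX/\cS}$-structure on $\cP^\bullet$ that you never arranged. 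The repair is standard but must be said: resolve $\sM$ (locally, which suffices since the statement is local) by induced modules $\shD_{\cX/\cS}\otimes_{\sO_\cX}\cL$ with $\cL$ locally free. These are left $\shD_{\cX/\cS}$-modules, so the relative de Rham differential is defined on each term, and they are flat over $\sO_\cX$ (being filtered unions of locally free subsheaves, since $\shD_{\cX/\cS}$ is locally free over $\sO_\cX$), so $\bL i_s^*$ is computed termwise. With that modification your base-change inputs --- $i_s^*\Omega^k_{\cX/\cS}\simeq\Omega^k_{\cX_s}$ and $i_s^*\shD_{\cX/\cS}\simeq\shD_{\cX_s}$ --- do close the argument; note also that $i_s^*\cP^\bullet$ is not a resolution of a single sheaf but a representative of the complex $\bL i_s^*\sM$, which is all you need since the Koszul model computes $\DR_{\cX_s}$ of any complex of $\shD_{\cX_s}$-modules.

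For comparison, the paper's proof avoids resolutions entirely: it rewrites $\bL i_s^*$ as $\otimes^\bL_{\sO_\cS}\C_s$ and observes that sections of $\sO_\cS$ are central in $\shD_{\cX/\cS}$, so the derived tensor product defining $\DR_{\cX/\cS}$ commutes with the base-change tensor by associativity, after which one only identifies $\omega_{\cX/\cS}$ and $\shD_{\cX/\cS}$ along the fiber. That route is shorter and hides the bookkeeping you worry about in the centrality statement; your route, once the resolution is taken inside $\shD_{\cX/\cS}$-modules, has the advantage of producing the explicit fiberwise Koszul representative, which is convenient if one wants to compute the fibers concretely (as the paper does elsewhere for the log de Rham complexes).
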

\begin{proof}
The functor $\bL i^*_s$ can be rewritten as $\otimes^\bL_{\sO_\cS}\C_s$, where $\C_s$ is the residue field of $s\in\cS$. Since sections of $\sO_{\cS}$ contains in the center of $\shD_{\cX/\cS}$, the required statement follows.
\end{proof}

\subsection{Relative direct images}\label{subsec:reldim}
We discuss direct image functors for $\shD$-modules under the relative setting. We fix a morphism $f$ over $\cS$
\[
\begin{tikzcd}
\cY\arrow[rr,"f"]\arrow[dr,"\varphi_1"] &  & \cX\arrow[dl,"\varphi"] \\
&\cS
\end{tikzcd}
\]
with $\varphi_1$ and $\varphi$ smooth. We recall the definition of the relative direct image:
\[f_+(\sN)\coloneqq \bR f_*(\sM\otimes_\sO \omega_{f/\cS}\otimes_{\shD}f^*\shD_{\cX/\cS})\]
for a left $\shD_{\cY/\cS}$-module $\sN$ (or more generally a complex of left $\shD_{\cY/\cS}$-modules), where $\omega_{f/\cS}=\omega_{\cY/\cS}\otimes f^*(\omega^{-1}_{\cX/\cS})$.

The morphism $f$ over $\cS$ induces a relative Lagrangian correspondence 
\[
\begin{tikzcd}
T^*(\cY/\cS)\arrow[rd,"\phi_1"] & \cY\times _\cX T^*(\cX/\cS)\arrow[l,"\varrho_f"]\arrow[r,"\varpi_f"]\arrow[d] & T^*(\cX/\cS)\arrow[ld,"\phi"]\\
&\cS&
\end{tikzcd}
\]
See for instance \cite[\S 2.4]{HTT} for the absolute Lagrangian correspondence.

The following proposition is a generalization of \cite[Theorem 1.17(a)]{FS17}.
\begin{prop}\label{prop:relpushfw}
With above notations, let $\sN$ be a coherent $\shD_{\cY/\cS}$-module with a good filtration over $F_\bullet\shD_{\cY/\cS}$. 
\begin{enumerate}
    \item If $f$ is proper over $\cS$, then $f_+(\sN)$ is a complex of $\shD_{\cX/\cS}$-modules with coherent cohomology sheaves.
    \item If moreover $\sN$ is relative holonomic, then $f_+(\sN)$ is a complex of $\shD_{\cX/\cS}$-modules with relative holonomic cohomology sheaves.
    \end{enumerate}
\end{prop}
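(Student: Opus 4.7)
The plan is to mimic the classical proofs of coherence and holonomicity preservation under proper direct images (cf.\ \cite[\S 2.5, \S 3.2]{HTT}), but work with relative good filtrations and read off the consequences fiberwise over $\cS$.

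For part (1), I would fix a good filtration $F_\bullet \sN$ over $F_\bullet \shD_{\cY/\cS}$. The transfer bimodule
\[
\shD_{\cY\to \cX/\cS}\;\coloneqq\;\omega_{f/\cS}\otimes_\sO f^*\shD_{\cX/\cS}
\]
carries a natural order filtration, so $\sN\otimes_\sO\omega_{f/\cS}\otimes^\bL_{\shD_{\cY/\cS}}f^*\shD_{\cX/\cS}$ inherits a good filtration whose associated graded, by the projection formula, is quasi-isomorphic to the $\sO$-pullback of $\gr^F_\bullet \sN$ along $\varrho_f$, viewed as a coherent $\sO$-module on $\cY\times_\cX T^*(\cX/\cS)$. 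Since $f$ is proper over $\cS$, the restriction of $\varpi_f$ to this support is proper, and Grauert's coherence theorem (used exactly as in the proof of Proposition \ref{pro:pfcohrelbasechange}) together with the convergent spectral sequence of the filtered pushforward yields an induced good filtration on each $\sH^i f_+\sN$. This gives the required $\shD_{\cX/\cS}$-coherence.

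For part (2), the same spectral sequence gives the characteristic-variety inclusion
\[
\Ch_\rel(\sH^i f_+\sN)\;\subseteq\;\varpi_f\bigl(\varrho_f^{-1}(\Ch_\rel(\sN))\bigr)\;\subseteq\;T^*(\cX/\cS).
\]
Because $\varphi$ and $\varphi_1$ are smooth, the relative cotangent bundles restrict to the absolute ones on each fiber, and the relative Lagrangian correspondence restricts to the absolute Lagrangian correspondence for $f_s\colon \cY_s\to \cX_s$. Taking fibers over $s\in\cS$ therefore gives
\[
\Ch_\rel(\sH^i f_+\sN)_s\;\subseteq\;\varpi_{f_s}\bigl(\varrho_{f_s}^{-1}(\Ch_\rel(\sN)_s)\bigr).
\]
By relative holonomicity of $\sN$, each fiber $\Ch_\rel(\sN)_s$ is either empty or Lagrangian in $T^*\cY_s$; the classical invariance of conic Lagrangians under the absolute Lagrangian correspondence attached to the proper morphism $f_s$ (\cite[\S 2.4]{HTT}) then shows the right-hand side is Lagrangian in $T^*\cX_s$. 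Hence every component of $\Ch_\rel(\sH^i f_+\sN)_s$ has dimension at most $\dim\cX_s=\dim\cX-\dim\cS$. Combined with the matching lower bound furnished by the relative Bernstein inequality (Theorem \ref{thm:rbernsteinM}), all components have the top dimension and are Lagrangian, which is exactly the definition of relative holonomicity of $\sH^i f_+\sN$.

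The main obstacle is the fiberwise compatibility underlying the displayed inclusion: one must verify that intersecting the relative characteristic variety with a fiber $T^*\cX_s$ commutes with the transform $\varpi_f\circ\varrho_f^{-1}$, and that the spectral sequence for $\gr^F_\bullet \sH^i f_+\sN$ does not produce spurious components over $T^*\cX_s$. This is a base change statement for the smooth families $\varphi$ and $\varphi_1$; it can be reduced to the absolute case by working locally where the cotangent short exact sequences split, and by invoking the same type of spectral-sequence argument (and torsion-freeness control as in Proposition \ref{prop:spnotorsioncc}) that was used in part (1).
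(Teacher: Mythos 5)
Your proof is correct in substance, but it splits from the paper on part (1) while coinciding with the paper on part (2). For part (1) and for the key estimate $\Ch_\rel(\cH^i f_+(\sN))\subseteq\varpi_f(\varrho_f^{-1}(\Ch_\rel(\sN)))$, the paper does not run the filtered-pushforward/Grauert argument at all: it observes that $(\sN,\C_\cY)$ is a good relative elliptic pair in the sense of Schapira--Schneiders and quotes \cite[Theorem 4.2 and Corollary 4.3]{SS94} for coherence and for the characteristic-variety inclusion. Your route re-proves these statements by adapting the classical absolute argument (transfer bimodule with its order filtration, Grauert coherence, the spectral sequence of the filtered pushforward); this is a legitimate and more self-contained alternative, though as written it is only a sketch of what is the genuinely technical part of the classical proof, whereas the citation buys the paper both statements at once. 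From the inclusion onward your argument is the paper's: restrict to the fiber over $s$, use properness of $f_s$ to control the image of $\Ch_\rel(\sN)_s$ under the absolute correspondence, and conclude with the relative Bernstein inequality (Theorem \ref{thm:rbernsteinM}).

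Two small corrections. First, the image $\varpi_{f_s}(\varrho_{f_s}^{-1}(\Lambda))$ of a Lagrangian under the correspondence attached to a proper $f_s$ is in general only a closed conic \emph{isotropic} subvariety, not Lagrangian; the paper invokes Kashiwara's estimate \cite[(4.9)]{KasBf} precisely for isotropy. Your argument survives unchanged because you only use the resulting dimension bound $\le\dim\cX_s$ before applying Theorem \ref{thm:rbernsteinM}. Second, the ``main obstacle'' you flag is not an obstacle: since the whole correspondence diagram lives over $\cS$, the inclusion $\varpi_f(\varrho_f^{-1}(\Ch_\rel(\sN)))\cap\phi^{-1}(s)\subseteq\varpi_f(\varrho_f^{-1}(\Ch_\rel(\sN)\cap\phi_1^{-1}(s)))$ is an immediate set-theoretic fact, which is all the paper uses; no base-change machinery, splitting of cotangent sequences, or torsion-freeness control in the style of Proposition \ref{prop:spnotorsioncc} is needed there.
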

\begin{proof}
Since $(\sN, \C_\cY)$ gives us a good relative elliptic pair (see \cite[Definition 2.14]{SS94}), the first statement follows from Theorem 4.2 in \emph{loc. cit.} If moreover $\sN$ is relative holonomic, then by Corollary 4.3 in \emph{loc. cit.} we have 
\[\Ch_\rel(\cH^i f_+(\sN))\subseteq \varpi_f(\varrho_f^{-1}(\Ch_\rel(\sN))\]
for each $i$. Thus, for $s\in \cS$
\[\Ch_\rel(\cH^i f_+(\sN))\cap \phi^{-1}(s) \subseteq \varpi_f(\varrho_f^{-1}(\Ch_\rel(\sN))\cap\phi^{-1}(s)\subseteq\varpi_f(\varrho_f^{-1}(\Ch_\rel(\sN)\cap\phi_1^{-1}(s))).\]
By definition $\Ch_\rel(\sN)\cap\phi_1^{-1}(s)$ is Lagrangian and hence isotropic. By \cite[(4.9)]{KasBf}, $\varpi_f(\varrho_f^{-1}(\Ch_\rel(\sN)\cap\phi_1^{-1}(s))$ and hence $\Ch_\rel(\cH^i f_+(\sN))\cap \phi^{-1}(s)$ are both isotropic. Thus, $\Ch_\rel(\cH^i f_+(\sN))\cap \phi^{-1}(s)$ is Lagrangian by Theorem \ref{thm:rbernsteinM} and $\cH^i f_+(\sN)$ is relative holonomic.
\end{proof}

The following lemma is immediate by construction, and we skip its proof.
\begin{lemma}\label{lm:cmspf+}
We have a natural isomorphism 
\[\bL i^*_s\circ f_+\simeq (f_s)_+\circ \bL i^*_s,\]
where $f_s:\cY_s\to \cX_s$ is the induced morphism over $s\in \cS$.
\end{lemma}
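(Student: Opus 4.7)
The plan is to mimic the proof of Lemma~\ref{lm:cmmpbdr}: first identify $\bL i^*_s$ with the derived tensor product $(-)\otimes^\bL_{\sO_\cS}\C_s$, then push this tensor product successively through the various constructions making up $f_+$, relying on the fact that sections of $\sO_\cS$ are central in both $\shD_{\cX/\cS}$ and $\shD_{\cY/\cS}$. Starting from
\[
f_+(\sN)= \bR f_*\bigl(\omega_{f/\cS}\otimes_{\sO}\sN\otimes^\bL_{\shD_{\cY/\cS}}f^*\shD_{\cX/\cS}\bigr),
\]
I would apply $(-)\otimes^\bL_{\sO_\cS}\C_s$ on the outside and commute it past $\bR f_*$ (a projection-formula style base change, since $\sO_\cS$ is pulled back from the base), and then past the internal $\sO$- and $\shD_{\cY/\cS}$-tensor products (since these are $\sO_\cS$-bilinear and $\sO_\cS$ is central).

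Next, I would identify the restricted factors using smoothness of $\varphi$ and $\varphi_1$. The order filtration gives $\gr^F_\bullet\shD_{\cX/\cS}\simeq\textup{Sym}^\bullet\shTA_{\cX/\cS}$ with $\shTA_{\cX/\cS}\otimes_{\sO_\cS}\C_s=\shTA_{\cX_s}$, so by a spectral-sequence argument
\[
\shD_{\cX/\cS}\otimes^\bL_{\sO_\cS}\C_s\simeq \shD_{\cX_s}
\]
(and likewise for $\cY$); meanwhile $\omega_{f/\cS}\otimes_{\sO_\cS}\C_s=\omega_{f_s}$ and $f^*\shD_{\cX/\cS}\otimes^\bL_{\sO_\cS}\C_s=f_s^*\shD_{\cX_s}$. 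Reassembling these identifications yields
\[
\bR (f_s)_*\bigl(\omega_{f_s}\otimes_{\sO}\bL i^*_s\sN\otimes^\bL_{\shD_{\cY_s}}f_s^*\shD_{\cX_s}\bigr)=(f_s)_+(\bL i^*_s\sN),
\]
as desired.

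The main technical point is the commutation of $\bR f_*$ with $(-)\otimes^\bL_{\sO_\cS}\C_s$. Working locally on $\cS$ and using smoothness of $\cS$ at $s$, the residue field $\C_s$ admits a finite Koszul resolution by a regular sequence in $\sO_\cS$; the derived tensor product is then computed termwise, and $\bR f_*$ commutes with finite direct sums, so the base change is automatic. Once this is in place, everything else is a formal manipulation of derived tensor products over central subrings, and the lemma follows by unwinding definitions.
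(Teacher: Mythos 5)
Your argument is correct, and it is essentially the argument the paper has in mind: the paper omits the proof as ``immediate by construction,'' and your write-up just fleshes this out using exactly the ingredients already set up there — the identification $\bL i_s^*\simeq(-)\otimes^\bL_{\sO_\cS}\C_s$ from the proof of Lemma \ref{lm:cmmpbdr}, the base-change identification $\mu^*\shD_{\cX/\cS}=\shD_{\cX_\cT/\cT}$ (with $\cT=\{s\}$), centrality of $\sO_\cS$, and the projection-formula/Koszul argument for commuting $\bR f_*$ with $\otimes^\bL_{\sO_\cS}\C_s$, which indeed needs no properness since the Koszul complex is a finite complex of free modules pulled back from the base.
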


\begin{coro}
If $f$ is (relative) proper over $\cS$, then $f_+$ preserves relative regular holonomicity. 
\end{coro}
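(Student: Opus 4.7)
The plan is to combine Proposition \ref{prop:relpushfw} with the base change Lemma \ref{lm:cmspf+} and the classical proper direct image theorem for regular holonomic $\shD$-modules (e.g.\ \cite[Theorem~6.1.5]{HTT}). Given a regular relative holonomic $\sN$ on $\cY$, Proposition \ref{prop:relpushfw} already shows that $f_+(\sN)$ is a bounded complex of $\shD_{\cX/\cS}$-modules whose cohomology sheaves $\cH^q f_+(\sN)$ are all relative holonomic; it remains only to check that each is regular in the sense of Definition \ref{def:relholo}.

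The key computation is a base change identification. For any $s\in\cS$, Lemma \ref{lm:cmspf+} yields a natural quasi-isomorphism
\[
\bL i^*_s f_+(\sN) \;\simeq\; (f_s)_+\,\bL i^*_s(\sN),
\]
where $f_s\colon \cY_s\to \cX_s$ is proper, being the base change of $f$. The regularity hypothesis on $\sN$ says exactly that $\bL i^*_s(\sN)$ is a bounded complex with regular holonomic cohomology sheaves on $\cY_s$, so the classical proper direct image theorem gives that $(f_s)_+\bL i^*_s(\sN)$ has regular holonomic cohomology on $\cX_s$. Therefore $\bL i^*_s f_+(\sN)$ has regular holonomic cohomology for every $s$.

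To descend from this complex-level regularity to regularity of each individual $\cH^q f_+(\sN)$, I would use the hypercohomology spectral sequence
\[
E_2^{p,q} \;=\; \bL^p i^*_s\bigl(\cH^q f_+(\sN)\bigr) \;\Longrightarrow\; \cH^{p+q}\bigl(\bL i^*_s f_+(\sN)\bigr).
\]
The $E_2$-entries are absolutely holonomic on $\cX_s$ by Lemma \ref{lm:spsmsubvar}, and the abutment was just shown to be regular holonomic. Using the vanishing of incoming differentials at the top cohomological degree of $f_+(\sN)$ (and dually of outgoing differentials at the bottom), together with the closure of the absolutely regular holonomic category on $\cX_s$ under subquotients and extensions, I would propagate regularity from the $E_\infty$-terms back to the $E_2$-terms by a double induction on $q$ and on the page number $r$.

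This last step is the main technical obstacle: as Sabbah's remark following Definition \ref{def:relholo} notes, regular relative holonomicity is \emph{not} known to be closed under subquotients in the relative category, so one cannot argue directly in the relative setting. The argument must be carried out fiber by fiber in the absolute category, carefully exploiting the boundary rows of the spectral sequence to initialize the induction and then transporting the resulting absolute regularity of all the $\bL^p i^*_s\cH^q f_+(\sN)$ back through Definition \ref{def:relholo}.
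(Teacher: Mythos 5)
Your first two paragraphs are precisely the paper's proof: properness of $f$ over $\cS$ gives properness of each $f_s$, and Lemma \ref{lm:cmspf+} combined with the classical proper direct image theorem for regular holonomic $\shD$-modules shows that $\bL i^*_s f_+(\sN)\simeq (f_s)_+\bL i^*_s(\sN)$ has regular holonomic cohomology for every $s\in\cS$, while Proposition \ref{prop:relpushfw} supplies the relative holonomicity of the cohomology of $f_+(\sN)$. This is the entire content of the paper's argument; the corollary is read there at the level of the complex $f_+(\sN)$, with regularity checked on the fibers $\bL i^*_s f_+(\sN)$, so no further descent is attempted or needed. (Minor point: the regularity definition you cite is given in the text of \S\ref{subsec:bashchangerelD}, not in Definition \ref{def:relholo} itself.)

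Your third step, descending regularity to each individual $\cH^q f_+(\sN)$ through the spectral sequence $E_2^{p,q}=\bL^p i^*_s\bigl(\cH^q f_+(\sN)\bigr)\Rightarrow \cH^{p+q}\bigl(\bL i^*_s f_+(\sN)\bigr)$, goes beyond the paper and, as sketched, does not close. Regularity of the abutment only gives regularity of the $E_\infty$-terms, which are subquotients of the $E_2$-terms; to recover regularity of an $E_2$-term you would need the images of the differentials into and out of it to be regular, and those are subquotients of neighbouring terms whose regularity is exactly what is in question. Even on the boundary row $q=q_{\max}$, where incoming differentials vanish, you only learn that the submodule $E_\infty^{p,q_{\max}}\subseteq E_2^{p,q_{\max}}$ is regular, while the successive cokernels inject into lower rows of unknown regularity, so the proposed double induction has no valid starting point. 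Closure of the absolute regular holonomic category under subquotients and extensions runs in the wrong direction here: it transmits regularity to subquotients, it does not recover a module's regularity from that of a subquotient. So if one insists on reading the corollary as a statement about each cohomology sheaf of $f_+(\sN)$ separately, your argument leaves a genuine gap at this point; as a proof of what the paper actually establishes, your first two paragraphs already suffice and coincide with the paper's proof.
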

\begin{proof}
If $f$ is proper over $\cS$, then $f_s$ is proper for $s\in\cS$. Since $(f_s)_+$ preserves regular holonomicity, the required statement follows from Lemma \ref{lm:cmspf+}. 
\end{proof}

\section{Logarithmic $\shD$-modules}\label{sec:logdmodule}
In this section, we recall $\shD$-modules under the logarithmic setting. Let $X$ be a complex manifold of dimension $n$ 
and let $D$ be a normal crossing divisor. We call such $(X,D)$ a (analytic) smooth log pair. We write by $\shD_{X,D}$ the subalgebra of $\shD_X$ consisting of differential operators preserving the ideal sheaf of $D$. In local coordinates $(x_1,x_2,\dots,x_n)$ on an open neighborhood $U$ with $D|_U=(\prod_{i=1}^rx_i=0)$ for some $r\le n$, $\shD_{X,D}$ is the subalgebra generated by $\sO_U$ and 
$$x_1\partial_{x_1},\dots,x_k\partial_{x_r},\partial_{x_{r+1}},\dots,\partial_{x_n}.$$
Since $D$ is normal crossing, $\shD_{X,D}$ is a coherent and noetherian sheaf of rings. Modules over $\shD_{X,D}$ are called logarithmic (log) $\shD$-modules.

The order filtration $F_\bullet \shD_X$ induces the order filtration $F_\bullet \shD_{X,D}$ such that the analytification of $\grf\shD_{X,D}$ gives us the structure sheaf of the log cotangent bundle $T^*(X,D)$. For a coherent $\shD_{X,D}$-module $\cM$, one can define the log characteristic variety
\[\Ch_{\log}(\cM)\subseteq T^*(X,D),\]
and the log characteristic cycle $\CC_{\log}(\cM)$ similar to the relative case. When $D=\emptyset$, we use $\Ch(\cM)$ (resp. $\CC(\cM)$) to denote the characteristic variety (resp. cycle) of the $\shD_X$-module $\cM$.

\subsection{Log de Rham complexes}\label{subsec:logdeRhamcons}
Suppose that $\cM$ is a left $\shD_{X,D}$-module on a smooth log pair $(X,D)$. Similar to the absolute case, $\omega_X(D)$ is a right $\shD_{X,D}$-module, where $\omega_X$ is the sheaf of the top forms on $X$. The \emph{log de Rham complex} of $\cM$ is defined as 
\[\DR_{X,D}(\cM)\coloneqq \omega_X(D)\otimes^\bL_{\shD_{X,D}}\cM. \]
By \cite[Lemma 2.3]{WZ}, we have 
\[\DR_{X,D}(\cM)\simeq [\cM\rightarrow \Omega^1(\log D)\otimes_\sO\cM\rightarrow\cdots\rightarrow \Omega^n(\log D)\otimes_\sO\cM]\]
where the complex on the right-hand side starts from the degree $-n$-term and $\Omega^i(\log D)$ denotes the sheaf of the degree $i$ log forms. 
In local coordinates 
$$(x_1,x_2,\dots,x_n) \textup{ with } D|_U=(\prod_{i=1}^rx_i=0)$$ on an open neighborhood $U$ for some $r\le n$, we further have 
\be
\DR_{X,D}(\cM)\simeq \Kos(\cM;x_1\partial_{x_1},\dots,x_r\partial_{x_r},\partial_{x_{r+1}},\dots, \partial_{x_n}),
\ee
where $\Kos$ denotes the Koszul complex of the actions $x_1\partial_{x_1},\dots,x_r\partial_{x_r},\partial_{x_{r+1}},\dots, \partial_{x_n}$ on $\cM$.

\subsection{Direct image functor}\label{subsec:dimagelogD}
Suppose that $f\colon (X,D)\rightarrow(Y,E)$ is a morphism of smooth log pairs, that is, $f\colon X\rightarrow Y$ is a morphism of complex manifolds such that $f^{-1}E\subseteq D$. Then we define the derived direct image functor for left log $\shD$-modules $\cM$ by 
\[f^{\log}_+(\cM)=Rf_*(\cM\otimes_\sO \omega_f\otimes^\bL_{\shD_{X,D}}f^*\shD_{Y,E}),\]
where $\omega^{\log}_f$ is the canonical sheaf of $f$,
\[\omega^{\log}_f=\omega_X(D)\otimes_\sO f^*(\omega^{-1}_{Y}(E)).\]
The above definition is compatible with the direct images in the relative case in \S\ref{subsec:reldim} if one take $D$ and $E$ empty.

\begin{prop}\label{prop:logpushfdr}
Let $f\colon (X,D)\rightarrow(Y,E)$ be a morphism of smooth log pairs and let $\cM$ be a $($left$)$ $\shD_{X,D}$-modules.
Then
\[Rf_*\DR_{X,D}(\cM)\simeq \DR_{Y,E}(f^{\log}_+\cM).\]
\end{prop}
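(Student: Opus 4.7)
The plan is to adapt the classical proof of the analogous compatibility for absolute $\shD$-modules (see e.g.\ Hotta--Takeuchi--Tanisaki, Proposition 4.2.5) to the logarithmic setting, relying on the fact that the standard ingredients—transfer bimodules, the projection formula for $Rf_*$, and Spencer/Koszul resolutions—all persist once we work with $\shD_{X,D}$ and morphisms of log pairs. First, I would unfold the right-hand side using the definitions of $f^{\log}_+$ and $\DR_{Y,E}$, obtaining
\[
\DR_{Y,E}(f^{\log}_+\cM)=\omega_Y(E)\otimes^\bL_{\shD_{Y,E}}Rf_*\bigl(\cM\otimes_\sO\omega^{\log}_f\otimes^\bL_{\shD_{X,D}}f^*\shD_{Y,E}\bigr).
\]
Then I would apply the projection formula for $Rf_*$, exploiting the right $f^{-1}\shD_{Y,E}$-module structure on the transfer bimodule $f^*\shD_{Y,E}=\sO_X\otimes_{f^{-1}\sO_Y}f^{-1}\shD_{Y,E}$, to move $\omega_Y(E)$ inside the pushforward and replace $\otimes^\bL_{\shD_{Y,E}}$ by $\otimes^\bL_{f^{-1}\shD_{Y,E}}$ paired against $f^{-1}\omega_Y(E)$.

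The crux is the log analogue of the standard ``transfer-module collapse'' identity
\[
f^*\shD_{Y,E}\otimes^\bL_{f^{-1}\shD_{Y,E}}f^{-1}\omega_Y(E)\simeq\sO_X\otimes_{f^{-1}\sO_Y}f^{-1}\omega_Y(E)=f^*\omega_Y(E),
\]
after which the factor $\omega^{\log}_f\otimes_\sO f^*\omega_Y(E)$ simplifies to $\omega_X(D)$ by the very definition $\omega^{\log}_f=\omega_X(D)\otimes_\sO f^*(\omega^{-1}_Y(E))$, and the whole expression inside $Rf_*$ becomes $\omega_X(D)\otimes^\bL_{\shD_{X,D}}\cM=\DR_{X,D}(\cM)$, yielding the claim.

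The main obstacle will be verifying this collapse identity as a statement in the derived category, since $\omega_Y(E)$ is not a projective right $\shD_{Y,E}$-module, so one must first produce a suitable flat resolution. I would handle this by constructing, locally on $Y$ in coordinates $(y_1,\ldots,y_m)$ where $E=(\prod_{j=1}^{s}y_j=0)$, a logarithmic Spencer/Koszul resolution of $\omega_Y(E)$ (equivalently, of $\sO_Y$ after the side-change by $\omega_Y(E)^{-1}$) by locally free right $\shD_{Y,E}$-modules, built from the log derivations $y_1\pa_{y_1},\ldots,y_s\pa_{y_s},\pa_{y_{s+1}},\ldots,\pa_{y_m}$. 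Tensoring this resolution against $f^*\shD_{Y,E}$ over $f^{-1}\shD_{Y,E}$ yields a finite Koszul complex on $X$ built from the pullbacks of these log derivations—which by the local Koszul description of $\DR_{X,D}$ in \S\ref{subsec:logdeRhamcons} matches the terms arriving from the left-hand side after unwinding. A secondary check is the coherence/boundedness hypothesis required to invoke the projection formula, which is automatic here because the log Spencer resolution has finite length bounded by $\dim X$ and all intermediate tensor factors have finite Tor-dimension.
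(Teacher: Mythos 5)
Your proposal is correct and takes essentially the same route as the paper, whose proof simply observes that the argument is identical to the absolute case in Hotta--Takeuchi--Tanisaki, Theorem 4.2.5: unfold the definitions, apply the projection formula for $Rf_*$, and collapse the transfer bimodule against $\omega_Y(E)$ via the logarithmic Spencer/Koszul resolution built from $y_1\pa_{y_1},\dots,y_s\pa_{y_s},\pa_{y_{s+1}},\dots,\pa_{y_m}$. One small point of care: since $\omega_Y(E)$ is a \emph{right} $\shD_{Y,E}$-module, the collapse identity should be stated with matching sides, namely $f^{-1}\omega_Y(E)\otimes^\bL_{f^{-1}\shD_{Y,E}}\bigl(f^*\shD_{Y,E}\otimes_{f^{-1}\sO_Y}f^{-1}\omega_Y^{-1}(E)\bigr)\simeq\omega_X(D)$ as right $\shD_{X,D}$-modules, i.e. it is the $\omega_Y^{-1}(E)$-twisted transfer bimodule that gets tensored, after which your simplification $\omega^{\log}_f\otimes_\sO f^*\omega_Y(E)\simeq\omega_X(D)$ finishes the computation exactly as you describe.
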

\begin{proof}
The proof of this proposition is exactly the same as the non-log case (cf. \cite[Theorem 4.2.5]{HTT}). We leave the detail for interested readers. 
\end{proof}

\subsection{Lattices}\label{subsec:lattices}
We recall the definition of lattices in the analytic setting. Suppose that $(X,D)$ is an analytic smooth log pair and $\cM$ is a coherent $\shD_X$-module. We write the algebraic localization of $\cM$ along $D$ by
\[\cM(*D)=\cM\otimes_\sO \sO_X(*D),\]
where 
\[\sO_X(*D)=\lim_{k\to \infty}\sO_X(kD).\]
Notice that in general $\cM(*D)$ is not even coherent over $\shD_X$. However, if $\cM$ is holonomic, then so is $\cM(*D)$, since holonomicity is preserved under tenser products over $\sO$. A coherent $\shD_{X,D}$-submodule 
$$\bar\cM\subseteq \cM(*D)$$ is called a $\shD_{X,D}$-\emph{lattice} of $\cM(*D)$ (or $\cM$) if $\bar\cM|_{X\setminus D}=\cM|_{X\setminus D}$. By definition, lattices of $\cM$ do not have torsion subsheaves supported on $D$ (although, $\cM$ might have). The prototype examples of lattices are Deligne lattices of local systems (see \cite[\S 4.4]{WZ}).

\subsection{From log to relative}\label{subsec:logtorel}
In this subsection, we discuss the connection between log $\shD$-modules and relative $\shD$-modules. We focus locally on $W$ a polydisc, 
\[W=\D^k_x\times \D_t^r\textup{ with coordinates }(x_1,\dots,x_k,t_1,\dots,t_r)\]
and a divisor $D$ given by $t_1\cdot t_2\cdots t_r=0$. In particular, $(W,D)$ is a smooth log pair. We consider the log rescaled families 
\[\widetilde W_j=W\times \D_y^j\]
with $y(j)=(y_1,\dots,y_j)$ the coordinates on $\D_y^j$ for $0\le j\le r$ ($\wt W_0=W$), and the maps 
\[p_j\colon \widetilde W_j\rightarrow W, (x,t,y(j))\mapsto(x,e^{y(j)}t),\]
where we abbreviate $(x_1,\dots,x_r)$ as $x$, $(e^{y_1}t_1,\dots,e^{y_j}t_j,t_{j+1},\dots,t_r)$ as $e^{y(j)}x$ and etc. Then we have the commutative diagram for each $0\le j<k$ 
\be
\begin{tikzcd}
\wt W_j \arrow[rr,hook,"i_j"]\arrow[rd,"p_j"] & & \wt W_{j+1}\arrow[ld,"p_{j+1}"]\\
& W &
\end{tikzcd}
\ee
where the inclusion is
$$i_j:\wt W_j\hookrightarrow \wt W_{j+1}, (x,t,y(j))\mapsto (x,t,y(j),0).$$

Let $U = \D^k_x \times (\D^\star)^r_t \subset W$ with $\D^\star$ the punctured disk. Then we define $\wt U_j = p^{-1}(U)$, $\wt D_j = p^{-1}_j(D)$. 
Given a $\shD_{W,D}$-lattice $\bar\cM$ of a $\shD_X$-module $\cM$,
we consider the pull-backs 
$\mathscr M_j \coloneqq p^*_j 
\bar\cM$ and write $p_k=p$, $\sM=\sM_k$, $\wt W=\wt W_k$, $\wt U_k=\wt U$ and $\wt D=\wt D_k$. 
\begin{lemma}\label{lm:pblogrel}
With notations above, let $\cM$ be a coherent $\shD_X$-module. Then 
\begin{enumerate}
    \item $p_j$ is smooth $($submersive$)$ for each $j$,
    \item $\shM_j$ is a coherent $\shD_{\wt W_j,\wt D_j}$-module for each $j$,
    \item $\shM$ is a coherent $\shD_{\wt W/\D^r_t}$-module for the natural projection
    $$ \pi_t: \wt W \to \D^r_t.$$
\end{enumerate}
\end{lemma}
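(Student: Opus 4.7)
The plan is to verify each claim by direct local computation in the coordinates $(x,t,y(j))$ on $\wt W_j$.

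For (1), I would simply write out the Jacobian of $p_j$: the $k$ rows coming from $x_i\mapsto x_i$ give the identity block $I_k$; the rows for $t_l\mapsto e^{y_l}t_l$ with $l\le j$ contribute $e^{y_l}$ on the $t_l$-diagonal and $e^{y_l}t_l$ on the $y_l$-diagonal; and the rows for $t_l\mapsto t_l$ with $l>j$ give $1$ on the diagonal. The whole Jacobian surjects onto the tangent space of $W$ at every point of $\wt W_j$, so $p_j$ is submersive, hence smooth.

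For (2), observe first that $p_j^{-1}(D)=\wt D_j$ is cut out by $\prod_{l=1}^r t_l$ since each factor $e^{y_l}$ is a unit; thus $p_j\colon(\wt W_j,\wt D_j)\to(W,D)$ is a morphism of log pairs. A direct computation of $dp_j$ on the generators of $\shTA_{\wt W_j,\wt D_j}$ gives
\[
dp_j(\partial_{x_i})=1\otimes\partial_{x_i},\quad dp_j(t_l\partial_{t_l})=1\otimes(t_l\partial_{t_l}),\quad dp_j(\partial_{y_l})=1\otimes(t_l\partial_{t_l})
\]
for all admissible indices, so the image of $dp_j$ lands in $p_j^*\shTA_{W,D}$. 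This is enough to define a canonical left $\shD_{\wt W_j,\wt D_j}$-action on $p_j^*\bar\cM$ by the standard $\shD$-module pullback formula, adapted to the log setting. Given local $\shD_{W,D}$-generators $m_1,\dots,m_a$ of $\bar\cM$, I would show the elements $1\otimes m_1,\dots,1\otimes m_a$ generate $p_j^*\bar\cM$ over $\shD_{\wt W_j,\wt D_j}$: writing a typical section as $f\otimes\sum_\beta P_\beta m_\beta$, one expands each $P_\beta$ in the generators $\partial_{x_i}$ and $t_l\partial_{t_l}$ and uses the formulas above to rewrite $f\otimes P_\beta m_\beta$ as an element of $\shD_{\wt W_j,\wt D_j}\cdot(1\otimes m_\beta)$. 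Coherence of the resulting sheaf then follows from coherence of $\shD_{\wt W_j,\wt D_j}$ as a sheaf of rings, once one invokes smoothness of $p_j$ (from (1)) to get that the transfer bimodule $p_j^*\shD_{W,D}$ is flat over $p_j^{-1}\shD_{W,D}$, which is the log analogue of the classical fact for smooth morphisms in the non-log $\shD$-module setting.

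For (3), the relative tangent sheaf $\shTA_{\wt W/\D^r_t}$ is freely generated over $\sO_{\wt W}$ by $\partial_{x_i}$, $i=1,\dots,k$, and $\partial_{y_l}$, $l=1,\dots,r$, since these are exactly the derivations killing the pullbacks of the base coordinates $t_l$. Specializing the computation in (2) to $j=r$, the crucial observation is that on elements of the form $1\otimes m$ the two operators $t_l\partial_{t_l}$ and $\partial_{y_l}$ induce identical actions, so any pullback operator $1\otimes P$ with $P\in\shD_{W,D}$ applied to $1\otimes m_\beta$ is already realized by an element of the smaller ring $\shD_{\wt W/\D^r_t}$. Hence the same generators $\{1\otimes m_\beta\}$ generate $\sM$ over $\shD_{\wt W/\D^r_t}$, and coherence follows as in (2). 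The main technical subtlety---really the only non-routine step---is the coherence assertion in (2) (requiring finite presentation, not merely finite type), which is precisely the log analogue of the classical result that pullback of a coherent $\shD$-module under a smooth morphism is coherent; this is the place where (1) is genuinely used.
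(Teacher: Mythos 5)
Your argument is correct in substance but proceeds by a different route than the paper. The paper's proof is filtration-theoretic: it pulls back a good filtration $F_\bullet\bar\cM$ over $F_\bullet\shD_{W,D}$, uses exactly the identities you computed (namely $t_i\partial_{t_i}\cdot(1\otimes m)=1\otimes t_i\partial_{t_i}m$ for (2), and $(t_i\partial_{t_i}-\partial_{y_i})\cdot(1\otimes m)=0$ for (3)) to see that $p_j^*(\grf\bar\cM)$ is coherent over $\grf\shD_{\wt W_j,\wt D_j}$, respectively over $\grf\shD_{\wt W/\D^r_t}$, and then invokes the criterion recalled in \S\ref{subsec:relchcc} that a module admitting a filtration with coherent associated graded is itself coherent. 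You instead argue via generators, presentations and the transfer bimodule $p_j^*\shD_{W,D}$; the computational core (the images of $\partial_{x_i}$, $t_l\partial_{t_l}$, $\partial_{y_l}$ under $dp_j$) is the same in both arguments, so the two proofs buy essentially the same thing, with the paper's version packaging the bookkeeping into the good-filtration formalism and yours into the pullback-of-presentations formalism.

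One step in your write-up is misattributed and should be tightened: flatness of $p_j^*\shD_{W,D}$ over $p_j^{-1}\shD_{W,D}$ (equivalently, flatness of $\sO_{\wt W_j}$ over $p_j^{-1}\sO_W$, from smoothness of $p_j$) only gives exactness of $p_j^*$, hence that a finite presentation of $\bar\cM$ pulls back to an exact sequence
\[(p_j^*\shD_{W,D})^a\longrightarrow (p_j^*\shD_{W,D})^b\longrightarrow p_j^*\bar\cM\longrightarrow 0.\]
To conclude coherence of $p_j^*\bar\cM$ over $\shD_{\wt W_j,\wt D_j}$ you additionally need the transfer module $p_j^*\shD_{W,D}$ itself to be coherent over $\shD_{\wt W_j,\wt D_j}$; flatness does not provide this (for a closed embedding the analogous transfer module is not coherent). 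The missing point is, however, already contained in your $dp_j$ computation: since the log differential is surjective on log vector fields, $p_j^*\shD_{W,D}$ is generated by $1\otimes 1$, with kernel the left ideal generated by $t_i\partial_{t_i}-\partial_{y_i}$ for $i\le j$, hence it is coherent; and in case (3) the map $\shD_{\wt W/\D^r_t}\to p^*\shD_{W,D}$, $Q\mapsto Q\cdot(1\otimes 1)$, is an isomorphism (compare associated gradeds, where $\partial_{y_i}\mapsto$ the symbol of $t_i\partial_{t_i}$), so the transfer module is free of rank one over the relative ring. With that repair your proof goes through.
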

\begin{proof}
Part (1) is obvious. For Part (2), we pick a good filtration $F_\bullet\bar\cM$ over $F_\bullet\shD_{X,D}$. Since $p_j$ is smooth, $p_j^*(F_\bullet\bar\cM)$ is a filtration of $\sM_j$ over $F_\bullet\shD_{\wt W_j,D_j}$. By construction, we have
\[t_i\partial_{t_i}\cdot (1\otimes p_j^{-1}(m))=1\otimes p_j^{-1} (t_i\partial_{t_i}\cdot m)\]
for sections $m$ of $\bar\cM$.
Therefore, $p^*_j(\grf\cM)$ is coherent over $\grf\shD_{\wt W_j,\wt D_j}$ and hence $p_j^*(F_\bullet\bar\cM)$ is a good filtration of $\sM_j$ over $F_\bullet\shD_{\wt W_j,D_j}$. In particular, $\sM_j$ is coherent over $\shD_{\wt W_j,\wt D_j}$. For Part (3), one observes that $t_i\partial_{t_i}-\partial_{y_i}$ annihilates $1\otimes p_j^{-1}(m)$. Thus, $p^*(\grf\cM)$ is coherent over $\grf\shD_{\wt W/\D^r_t}$. In consequence, $\sM$ is coherent over $\shD_{\wt W/\D^r_t}$.
\end{proof}

\begin{theorem}\label{thm:relchrellog}
Let $\cM$ be a regular holonomic $\shD_W$-module and let $\bar\cM$ be a $\shD_{W,D}$-lattice of $\cM$. Then we have:
\begin{enumerate}
    \item If $r=1$, then $\sM$ is a relative regular holonomic $\shD_{\wt W/\D^r_x}$-module.
    \item If $\sM$ is flat over $\D^r_x$ for some $r>1$, then $\sM$ is relative holonomic over $\D^r_x$. 
\end{enumerate}

\end{theorem}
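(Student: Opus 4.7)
The plan is to establish relative holonomicity of $\sM$ by verifying that (a) $\sM$ is flat over $\D^r_t$ with (b) holonomic fibers, then invoking Proposition~\ref{prop:spnotorsioncc} (applied iteratively in the case $r>1$) to conclude that the fibers of $\CC_\rel(\sM)$ over $\D^r_t$ are Lagrangian.

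Coherence of $\sM$ over $\shD_{\wt W/\D^r_t}$ is Lemma~\ref{lm:pblogrel}(3). Since $\bar\cM$ is a lattice it has no $t_j$-torsion, and this torsion-freeness transfers to $\sM=p^*\bar\cM$ under the smooth morphism $p$, so $\sM$ is torsion-free over $\D^r_t$. For $r=1$ this equals flatness over the smooth curve $\D_t$; for $r>1$, flatness is supplied by the hypothesis. This flatness is what licenses the iterated use of Proposition~\ref{prop:spnotorsioncc}, slicing $\D^r_t$ by coordinate hyperplanes to reduce the comparison of $\CC_\rel$ and $\CC$ fiberwise.

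For the fiber analysis, fix $t_0\in\D^r_t$. If all coordinates of $t_0$ are non-zero, then $p$ restricted to the fiber $\pi_t^{-1}(t_0)\simeq \D^k_x\times\D^r_y$ is \'etale onto an open subset of $U$, and $\sM$ restricted to this fiber is the smooth $\sO$-pullback of $\cM|_U$, hence (regular) holonomic. If $t_0$ lies on a divisor stratum---in particular if $t_0=0$---the fiber identifies with the pullback of $\bar\cM$ restricted to the corresponding stratum of $D$ along a projection, with $\partial_{y_j}$ acting as $t_j\partial_{t_j}$ by Lemma~\ref{lm:pblogrel}(3). When $\cM$ is regular holonomic, the Kashiwara--Malgrange $V$-filtration on $\cM(*D)$ decomposes this restriction into generalized $t_j\partial_{t_j}$-eigenspaces that are regular holonomic on the stratum, assembling into a regular holonomic fiber. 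In the non-regular setting of Part~(2), analogous arguments combining Gabber involutivity (Theorem~\ref{thm:Ginv}) and the relative Bernstein inequality (Theorem~\ref{thm:rbernsteinM}) yield only holonomicity.

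The main obstacle I expect is establishing the holonomicity (and, for Part~(1), regularity) of the fiber at the deepest stratum $t_0=0$: this requires interpreting the pullback of the log lattice restriction $\bar\cM|_D$ along the projection with twisted $\partial_{y_j}$-action as an ordinary $\shD$-module on the fiber $\pi_t^{-1}(0)$ and invoking the Kashiwara--Malgrange structure to conclude (regular) holonomicity of each generalized eigenspace piece. Once this is in place, flatness plus Proposition~\ref{prop:spnotorsioncc} gives the equality $\CC_\rel(\sM)\cap T^*\pi_t^{-1}(t_0)=\CC(\sM|_{\pi_t^{-1}(t_0)})$, yielding the Lagrangian fibers required for relative (regular) holonomicity.
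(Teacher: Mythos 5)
There is a genuine gap: your argument never verifies the second hypothesis of Proposition~\ref{prop:spnotorsioncc}, namely that $\CC_\rel(\sM)$ has \emph{no components lying over} the relevant hyperplanes (in particular over $\{t_1\cdots t_r=0\}$). You write that ``flatness is what licenses the iterated use of Proposition~\ref{prop:spnotorsioncc}'', but flatness only supplies the torsion-freeness hypothesis; the absence of components over $\cX_\cH$ is the substantive condition, and it is exactly what the paper establishes first, via Corollary~\ref{cor:chrellogrel} (a consequence of the Sabbah/Brian\c{c}on--Maisonobe--Merle result, Lemma~\ref{lm:BMM}), which identifies $\Ch_\rel(\sM)$ with a union of relative conormal spaces attached to strata of $\Ch(\cM|_U)$. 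This condition is not a formal consequence of flatness or of fiberwise holonomicity of the $\bL i_{t_0}^*$-restrictions, so the chain ``flatness $+$ holonomic fibers $+$ Proposition~\ref{prop:spnotorsioncc} $\Rightarrow$ Lagrangian fibers of $\Ch_\rel$'' breaks down. A concrete illustration: on $\cX=\D_x\times\D_t\to\D_t$ take $\sN=\shD_{\cX/\D_t}/\shD_{\cX/\D_t}(t\partial_x-1)$. One checks $\sN$ is $t$-torsion-free (hence flat over $\D_t$), $i_\alpha^*\sN\simeq\shD_{\D_x}/(\alpha\partial_x-1)$ is holonomic for $\alpha\neq0$, and $\bL i_0^*\sN=0$; yet every principal symbol in the ideal is divisible by $t\xi_x$, so $\Ch_\rel(\sN)\supseteq\{t\xi_x=0\}$ and its fiber over $t=0$ is all of $T^*\D_x$, which is not Lagrangian. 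So flatness plus holonomic derived fibers does not imply relative holonomicity; for $\sM=p^*\bar\cM$ the needed control of $\Ch_\rel(\sM)$ must come from the specific geometry of the log rescaling, i.e.\ from Lemma~\ref{lm:BMM}, which your proposal omits entirely.

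A secondary weakness is the treatment of the central fiber for the regularity claim in Part (1). The restriction $\bar\cM/t\bar\cM$ is not literally decomposed by the Kashiwara--Malgrange $V$-filtration of $\cM(*D)$: the lattice induces its own good filtration over $V_\bullet\shD_X$, and comparing its graded pieces with nearby/vanishing cycles requires a $b$-function argument (Theorem~\ref{thm:mibfs} in the multi-variable case) or, as the paper does, the $K_0$-identity $[\bar\cM|_{(t=0)}]=[\Phi_{t=0}(\cM)]$ of Ginsburg combined with Theorem~\ref{thm:holnb}(1) and the fact that holonomicity and regularity are detected on $K_0$-classes. Your ``generalized eigenspace'' assembly is the right heuristic but is not yet a proof, and the analogous step for Part (2) (invoking Theorems~\ref{thm:Ginv} and~\ref{thm:rbernsteinM} for the deeper strata) is likewise only gestured at. Still, the decisive missing ingredient is the first one: without the Sabbah/BMM description of $\Ch_\rel(\sM)$, Proposition~\ref{prop:spnotorsioncc} cannot be applied and the fibers of $\Ch_\rel(\sM)$ are not controlled.
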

If $r\ge 2$, then $\sM$ is not necessarily relative holonomic. See Example \ref{ex:notrelhollogres} in \S \ref{sec:gemMal}.

\subsection{Proof of Theorem \ref{thm:CClogC}}
Our first goal is to prove that, the characteristic variety $\Ch_{\log} (\bar\cM)$ is the closure of   $\Ch(\cM|_U)$ in the log cotangent bundle $T_{\log}^* W$, which is equivalent to prove that $\Ch_{\log} (\bar\cM)$ has no irreducible components over $D$. Then the statement of characteristic cycles follows immediately, since multiplicities are generically defined. 

Our main tool is a technical result of Sabbah \cite{Sab2} (see also \cite{BMM}). Consider a smooth submersion $\varphi: \cX \to \cS$. Let $\cN$ be a regular hononomic $\shD_\cX$-module, $\cN_{\rel}$ be a coherent $\shD_{\cX/\cS}$-submodule of $\cN$, that generates $\cN$ as $D_\cX$-module. Suppose $\Ch(\cN) = \bigcup_{Y \in L} T_{Y}^* \cX$, with a set $L$ (locally finite) consisting of irreducible subvarieties of $\cX$. Let $L_1 \subset L$ be a subset consisting of $Y$ such that $\varphi(Y)$ contains a non-empty open subset of $\cS$. Denote by $Y^{\textup{sm}}$ the smooth locus of $Y$. By generic smoothness, we can assume $Y^{\textup{sm}}$ is smooth over $\cS$ (shrink $Y^{\textup{sm}}$ if necessary). Then we obtain the relative conormal bundle of $Y^{\textup{sm}}\hookrightarrow\cX$ over certain open subset of $\cS$. We then write by $T^*_{\varphi|_Y}(\cX/\cS)$ the closure of the (generically defined) relative conormal bundle, calling it the \emph{relative conormal space} of $\varphi|_Y$. Notice that relative conormal spaces are not necessarily relative Lagrangian.

\begin{lemma}\label{lm:BMM}\cite[3.2.Th\'eor\`em]{Sab2}, \cite[Lemme 2.2]{BMM}
With notations as above, suppose there is a non-constant holomorphic function $F: \cX \to \C$, such that $L \RM L'$ are contained in $F^{-1}(0)$, and $\cN$ are without $F$-torsion. Then, we have
$$ \Ch_{\rel}(\cN_{\rel}) = \bigcup_{Y \in L_1} T^*_{\varphi|_Y}(\cX/\cS), $$
where $T^*_{\varphi|_Y}(\cX/\cS)$ is the relative conormal space of $\varphi|_Y$.
\end{lemma}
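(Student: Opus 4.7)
The strategy is to compare both sides of the equality over the open locus $\cU := \cX \setminus F^{-1}(0)$ where the hypotheses are cleanest, and then propagate to all of $\cX$ using the $F$-torsion-freeness assumption. The key bridge is the natural projection $\sigma \colon T^*\cX \to T^*(\cX/\cS)$ coming from the short exact sequence of cotangent bundles for the submersion $\varphi$; for smooth $Y \in L_1$ with $\varphi|_Y$ submersive, a fiberwise calculation identifies $\sigma(T^*_Y\cX) = T^*_{\varphi|_Y}(\cX/\cS)$ (annihilators of $T_yY$ map to annihilators of $T_y(Y/\cS)$, and a dimension count confirms both sides are of dimension $\dim\cX$).

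Next, I would establish the equality on $\cU$. Since $\cN_\rel \subseteq \cN$ generates $\cN$ as a $\shD_\cX$-module, a good filtration $F_\bullet\cN_\rel$ over $F_\bullet\shD_{\cX/\cS}$ induces a good filtration
\[G_k\cN \;:=\; \sum_{i+j=k} F_i\shD_\cX \cdot F_j\cN_\rel\]
over $F_\bullet\shD_\cX$, together with a surjection $\sigma^*\widetilde{\gr^F\cN_\rel} \twoheadrightarrow \widetilde{\gr^G\cN}$ on $T^*\cX$. Taking supports gives $\sigma(\Ch(\cN)) \subseteq \Ch_\rel(\cN_\rel)$. The hypothesis $L\setminus L_1\subseteq F^{-1}(0)$ yields $\Ch(\cN|_\cU) = \bigcup_{Y\in L_1} T^*_Y\cX \cap T^*\cU$, giving one inclusion of the lemma over $\cU$. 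For the reverse inclusion I would combine Gabber's involutivity (Theorem \ref{thm:Ginv}) and the relative Bernstein inequality (Theorem \ref{thm:rbernsteinM}) with a generic-point analysis: each component of $\Ch_\rel(\cN_\rel|_\cU)$ has minimal fiber dimension $\dim\cX - \dim\cS$, and, tracking how generators of $\cN$ arise from $\cN_\rel$ together with the regular holonomicity of $\cN$, a generic point of such a component must lift through $\sigma$ to a generic point of $\Ch(\cN|_\cU)$.

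For the final step, the extension from $\cU$ to $\cX$, I would use the $F$-torsion freeness of $\cN_\rel$ (inherited from $\cN$). The goal is to prove that no irreducible component of $\Ch_\rel(\cN_\rel)$ is contained in $\pi_\rel^{-1}(F^{-1}(0))$, where $\pi_\rel\colon T^*(\cX/\cS)\to \cX$ is the bundle projection. A good filtration $F_\bullet\cN_\rel$ can be perturbed by a Rees-module or filtered Artin-Rees argument so that the induced map $\gr^F\cN_\rel \xrightarrow{F} \gr^F\cN_\rel$ remains injective; then $\gr^F\cN_\rel$ is itself $F$-torsion-free, and its support has no components in $F^{-1}(0)$. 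Hence $\Ch_\rel(\cN_\rel) = \overline{\Ch_\rel(\cN_\rel|_\cU)}$, and by definition of $T^*_{\varphi|_Y}(\cX/\cS)$ as the closure of the generically defined relative conormal bundle we conclude
\[\Ch_\rel(\cN_\rel) \;=\; \overline{\bigcup_{Y\in L_1} T^*_{\varphi|_Y}(\cX/\cS)\cap T^*(\cU/\cS)} \;=\; \bigcup_{Y\in L_1} T^*_{\varphi|_Y}(\cX/\cS).\]

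The main obstacle is the reverse inclusion on $\cU$: a pure dimension argument from the relative Bernstein inequality does not suffice, because relatively involutive subvarieties can strictly contain relative Lagrangians of the same fiber dimension. One must genuinely exploit that $\cN_\rel$ generates the regular holonomic module $\cN$ --- likely via microlocalization on the absolute characteristic variety or a simultaneous matching of absolute and relative filtrations --- to forbid spurious components. The Artin-Rees perturbation needed to transfer $F$-torsion-freeness from $\cN_\rel$ to $\gr^F\cN_\rel$ in the extension step is a delicate but secondary technicality.
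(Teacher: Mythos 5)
The paper does not prove this lemma at all: it is imported verbatim from Sabbah \cite[3.2.Th\'eor\`em]{Sab2} and Brian\c{c}on--Maisonobe--Merle \cite[Lemme 2.2]{BMM}, so the only benchmark is those (substantial) proofs, and your sketch does not reach them. Your easy half is fine: a good filtration on $\cN_\rel$ induces one on $\cN$, the surjection onto $\widetilde{\gr}\,\cN$ of the pullback of $\widetilde{\gr}\,\cN_\rel$ along $\sigma\colon T^*\cX\to T^*(\cX/\cS)$ gives $\sigma(\Ch(\cN))\subseteq \Ch_\rel(\cN_\rel)$, and generically $\sigma(T^*_Y\cX)=T^*_{\varphi|_Y}(\cX/\cS)$ for $Y\in L_1$, so $\bigcup_{Y\in L_1}T^*_{\varphi|_Y}(\cX/\cS)\subseteq\Ch_\rel(\cN_\rel)$. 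But the content of the lemma is the reverse inclusion, i.e.\ ruling out components of $\Ch_\rel(\cN_\rel)$ not accounted for by $\Ch(\cN)$, and you explicitly leave this open (``likely via microlocalization \dots to forbid spurious components''). As you yourself note, Theorem \ref{thm:Ginv} plus Theorem \ref{thm:rbernsteinM} cannot do it; worse, there is no dimension criterion to aim for, since relative conormal spaces are not relative Lagrangian (cf.\ Example \ref{ex:notrelhollogres}), and nowhere does your argument use regular holonomicity of $\cN$, which is essential (the example following Theorem \ref{thm:CClogC} in the introduction shows the formula fails for irregular modules). So the heart of the proof is missing.

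The extension step across $F^{-1}(0)$ is also unsound as written. You propose to perturb a good filtration by an Artin--Rees/Rees-module argument so that $\gr^F\cN_\rel$ becomes $F$-torsion-free and to conclude that $\Ch_\rel(\cN_\rel)$ has no component over $F^{-1}(0)$. That principle is false already in the simplest absolute situation: $\sO_X(*D)$ has no torsion along a defining equation $F$ of a smooth divisor $D$, yet $T^*_DX$ is a component of its characteristic variety, hence lies in the support of $\gr$ for \emph{every} good filtration, so no filtration with $F$-torsion-free graded module can exist. Torsion-freeness of a module does not pass to any associated graded, and the absence of components of $\Ch_\rel(\cN_\rel)$ over $F^{-1}(0)$ is part of what must be proved, using the full hypotheses (that $\cN_\rel$ generates the regular holonomic $\cN$ and that the components of $\Ch(\cN)$ outside $L_1$ lie over $F^{-1}(0)$), as in \cite{BMM} and \cite{Sab2}. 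In sum, the proposal establishes only the easy inclusion; both hard steps are either missing or rest on a reduction that fails.
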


Since characteristic cycles are local, it is enough to assume $X=W$, a polydisc as in \S\ref{subsec:logtorel}.
To apply Lemma \ref{lm:BMM}, we let $\cN = p^*(\cM(*D))$, $\cN_{\rel} = \shM$, $\cX = \wt W$, $\cS = \D^r_t$, $\varphi = \pi_t$, and $F = t_1 \cdots t_r$. Suppose we have decomposition
$$ \Ch(\cM|_U) = \bigcup_{Y \in L_1} T^*_Y W $$
for a set of closed strata $L_1$ in $U$. Then we may define a set of closed strata $\wt L_1$ in $\wt W$, by sending $Y \in L_1$ to $\wt Y = \overline{p^{-1}Y} \subset \wt W$. Since $p$ is submersive, we have
$$ \Ch(p^*\wt\cM(*D))|_{\wt U} = \bigcup_{\wt Y \in \wt L_1} T^*_{\wt Y} \wt W|_{\wt U}.   $$

\begin{coro}\label{cor:chrellogrel}
$$ \Ch_{\rel}(\shM) = \bigcup_{\wt Y \in \wt L_1} T^*_{\pi_t|_{\wt Y}}(\wt W /  \D^k_x ). $$
\end{coro}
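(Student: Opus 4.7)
The plan is to apply Lemma \ref{lm:BMM} directly, taking $\cX = \wt W$, $\cS = \D^r_t$, $\varphi = \pi_t$, $\cN_{\rel} = \sM$, $F = t_1 \cdots t_r$ pulled back to $\wt W$, and for $\cN$ either $p^*(\cM(*D))$ itself or, if the generation hypothesis is cleaner in the latter form, the $\shD_{\wt W}$-submodule $\shD_{\wt W}\cdot\sM \subseteq p^*(\cM(*D))$. The body of the proof then reduces to verifying the hypotheses of that lemma.

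First I would check the structural conditions. The map $\pi_t$ is a smooth submersion by construction; $\cN$ is regular holonomic because $\cM(*D)$ is (since $\cM$ is) and both smooth pullback and passage to coherent submodules preserve regular holonomicity; $\sM$ is a coherent $\shD_{\wt W/\cS}$-submodule of $\cN$ by Lemma \ref{lm:pblogrel}(3) together with the exactness of the submersive pullback $p^*$ applied to $\bar\cM \hookrightarrow \cM(*D)$; and $\cN$ is $F$-torsion-free because $\cM(*D)$ is $D$-torsion-free by definition of the algebraic localization, a property preserved by flat pullback and by passage to submodules.

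Next, by the setup immediately preceding the corollary, $\Ch(\cN)|_{\wt U} = \bigcup_{\wt Y \in \wt L_1} T^*_{\wt Y}\wt W|_{\wt U}$. Hence every other component of $\Ch(\cN)$, and in particular every component that fails to dominate $\cS$, must be contained in $\wt W \setminus \wt U = p^{-1}(D) = F^{-1}(0)$, which is exactly the condition $L \setminus L_1 \subseteq F^{-1}(0)$ required by Lemma \ref{lm:BMM}. The lemma then gives
\[
\Ch_\rel(\sM) = \bigcup_{\wt Y \in \wt L_1^{\mathrm{dom}}} T^*_{\pi_t|_{\wt Y}}(\wt W/\cS),
\]
where $\wt L_1^{\mathrm{dom}} \subseteq \wt L_1$ consists of those $\wt Y$ which dominate $\cS$; for the remaining $\wt Y \in \wt L_1$ the relative conormal space is vacuous, so extending the union to all of $\wt L_1$ is a harmless notational convention.

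The main obstacle will be the generation condition $\shD_{\wt W} \cdot \sM = \cN$. Since $\bar\cM$ is only a $\shD_{W,D}$-module, in general $\shD_W \cdot \bar\cM$ need not equal $\cM(*D)$ (for instance, $\bar\cM = \sO_W$ sits inside $\cM(*D) = \sO_W(*D)$ without generating it). The cleanest workaround is to replace $\cN$ by $\shD_{\wt W}\cdot\sM \subseteq p^*(\cM(*D))$; one then reverifies that $\cN|_{\wt U} = p^*(\cM(*D))|_{\wt U}$, so the characteristic-variety description above is unchanged, and that this new $\cN$ is still regular holonomic and $F$-torsion-free as a submodule of a regular holonomic, $F$-torsion-free module. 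With $\cN$ so chosen, all hypotheses of Lemma \ref{lm:BMM} are satisfied and the stated formula for $\Ch_\rel(\sM)$ follows.
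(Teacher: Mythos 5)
Your proof runs along the same lines as the paper's: both apply Lemma \ref{lm:BMM} to the data $\cX=\wt W$, $\cS=\D^r_t$, $\varphi=\pi_t$, $\cN_\rel=\sM$, $F=t_1\cdots t_r$ fixed just before the corollary, and both identify the relevant strata of the lemma with $\wt L_1$. Your replacement of $\cN=p^*(\cM(*D))$ by $\shD_{\wt W}\cdot\sM$ to secure the generation hypothesis is a legitimate refinement (the paper's proof does not comment on this point, and your example $\bar\cM=\sO_W$ shows the hypothesis can indeed fail for $p^*(\cM(*D))$ itself); since $\shD_{\wt W}\cdot\sM$ agrees with $p^*(\cM(*D))$ over $\wt U$, is a coherent submodule of a regular holonomic, $F$-torsion-free module, the rest of your hypothesis-checking goes through unchanged.

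The one place where your write-up is genuinely off is the treatment of dominance. You cannot dismiss possibly non-dominant $\wt Y\in\wt L_1$ as a ``harmless notational convention'': if some $\wt Y\in\wt L_1$ failed to have $\pi_t(\wt Y)$ contain a nonempty open subset of $\D^r_t$, then $\wt Y$ would lie in $L\setminus L_1$, yet $\wt Y$ meets $\wt U$ and so is not contained in $F^{-1}(0)$; hence the hypothesis $L\setminus L_1\subseteq F^{-1}(0)$ of Lemma \ref{lm:BMM} would fail and the lemma could not be applied at all, and moreover the relative conormal space of such a $\wt Y$ would not even be defined, so the asserted formula (a union over all of $\wt L_1$) would not make sense as stated. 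What you need---and what the paper's proof invokes with ``by the construction of $p$''---is that every $\wt Y=\overline{p^{-1}Y}$ with $Y\subseteq U$ does dominate $\D^r_t$: for $(x_0,t_0)\in Y$ with $t_0\in(\D^\star)^r$, the fiber $p^{-1}(x_0,t_0)=\{(x_0,e^{-y}t_0,y):y\in\D^r_y\}$ maps under $\pi_t$ onto an open subset of $(\C^\star)^r$, so $\pi_t(\wt Y)$ contains a nonempty open set. With this one-line check inserted, so that the lemma's $L_1$ is exactly $\wt L_1$, your argument is complete and coincides with the paper's.
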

\begin{proof} 
By Lemma \ref{lm:BMM}, we only need to look at strata of $ \Ch(p^*\cM(*D))$ that project under $\pi_x$ to an open set in $\D_x^k$, hence it suffices to consider the strata that intersect with $\pi_x^{-1}( (\D^*_x)^k) = \wt U$ (by the construction of $p$). These are labeled exactly by $\wt L_1$. 
\end{proof}

\begin{proof}[Proof of Theorem \ref{thm:relchrellog}]
By Corollary \ref{cor:chrellogrel}, $\Ch_\rel(\sM)$ does not have components over $t_1\cdots t_r=0$. For every point $\alpha\in \D^r_t$, we pick $r$ general hyperplanes passing $\alpha$. By flatness assumption, the relative holonomicity in Part (2) follows from inductively applying Proposition \ref{prop:spnotorsioncc}.  Since $\sM\subseteq \cN=p^*(\cM(*D))$, $\sM$ is torsion-free over $\D^r_t$. When $r=1$, torsion-freeness implies flatness and hence the relative holonomicity of Part (1) follows. For regularity in Part (1), one observes that
\[i_\alpha^*(\sM)\simeq p_\alpha^*(\cM|_U) \]
are regular holonomic $\textup{for } 0\not=\alpha\in \D_t$,
where the morphism $p_\alpha$ is
$$p_\alpha\colon \D_x^k\times\{\alpha\}\times \D_y\to \D_x^k\times \D_t \quad (x,\alpha,y)\mapsto (x,\alpha e^y).$$
When $\alpha=0$,
\[i_0^*(\sM)\simeq p_0^*(\cM|_{(t=0)}).\]
But 
\[[\cM|_{(t=0)}]=[\Phi_{t=0}(\cM)]\]
in the Grothendieck group $K_0$ (by \cite[Proposition 1.1.2]{Gil}). Since regularity is well-defined for objects in $K_0$, $\cM|_{(t=0)}$ is regular holonomic by Theorem \ref{thm:holnb}(1). The regularity in Part (2) is similar by induction. 
\end{proof}

We use $T^*_{\log}$ and $T^*_\rel$ to denote the log/relative cotangent bundle. 

\begin{prop}\label{prop:logrelsp}
We have: 
\begin{enumerate}
    \item $\Ch_{\log}(\shM) = \iota(\Ch_{\rel}(\shM))$ where
    $$ T^*_{\log} \wt W = T^*\D_x^k \times T^*_{\log} \D^r_t \times T^* \D^r_y, \quad T^*_{\rel} \wt W = T^*\D_x^k \times \D^r_t \times T^* \D^r_y $$
    $$ \iota:  T^*_{\rel} \wt W \to T^*_{\log} \wt W, \quad (x,t,y, \xi_x,\xi_y) \mapsto (x,t,y, \xi_x, \wt \xi_t = \xi_y, \xi_y),$$
    and $\wt \xi_t$ is the coefficient in front of $dt / t$.  
    \item $\tilde\iota(\Ch_{\log}(\bar\cM)) = \Ch_{\log}(\shM)|_{\{y=0\}}$, where $\tilde\iota\colon T^*_{\log} W\hookrightarrow T^*_{\log} \wt W$ is given by 
    \[(x,t,\xi_x,\wt \xi_t)\mapsto(x,t,\xi_x,\wt \xi_t, \xi_y=\wt\xi_t).\]
\end{enumerate}
\end{prop}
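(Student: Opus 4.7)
The plan is to work with a common good filtration on $\shM$ and compare its support as a graded module under the two graded rings $\grf\shD_{\wt W,\wt D}$ and $\grf\shD_{\wt W/\D_t^r}$. Pick a good filtration $F_\bullet\bar\cM$ over $F_\bullet\shD_{W,D}$; by the proof of Lemma \ref{lm:pblogrel}, the pullback $F_\bullet\shM\coloneqq p^*F_\bullet\bar\cM$ is simultaneously a good filtration over $F_\bullet\shD_{\wt W,\wt D}$ and over $F_\bullet\shD_{\wt W/\D_t^r}$, and $\grf\shM=p^*\grf\bar\cM$ as an $\sO_{\wt W}$-module.

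For part (1), the essential observation is that the degree-one operator $t_i\partial_{t_i}-\partial_{y_i}$ acts on $\shM$ via $f\cdot(1\otimes m)\mapsto ((t_i\partial_{t_i}-\partial_{y_i})f)\cdot(1\otimes m)$, because both $t_i\partial_{t_i}$ and $\partial_{y_i}$ descend via $p$ to the same log vector field $t_i'\partial_{t_i'}$ on $W$ and so both annihilate $1\otimes m$. In particular $(t_i\partial_{t_i}-\partial_{y_i})F_k\shM\subseteq F_k\shM$, so in the degree-one part of $\grf\shD_{\wt W,\wt D}$ the symbol $\wt\xi_t^{(i)}-\xi_y^{(i)}$ annihilates $\grf\shM$. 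This forces $\Ch_{\log}(\shM)\subseteq\{\wt\xi_t=\xi_y\}=\iota(T^*_\rel \wt W)$. Along this locus the log action factors through the relative one by identifying $\wt\xi_t^{(i)}$ with $\xi_y^{(i)}$, so $\iota^{-1}(\Ch_{\log}(\shM))$ equals the support of $\grf\shM$ as a $\grf\shD_{\wt W/\D_t^r}$-module, namely $\Ch_\rel(\shM)$. This gives $\Ch_{\log}(\shM)=\iota(\Ch_\rel(\shM))$.

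For part (2), I would invoke log smoothness of $p$: from $p^*(dt_i'/t_i')=dt_i/t_i+dy_i$, the log cotangent pullback
\[
dp^\#\colon \wt W\times_W T^*_{\log}W \longrightarrow T^*_{\log}\wt W
\]
sends $(x,t,y;\xi_x,\wt\xi_{t'})$ to $(x,t,y,\xi_x,\wt\xi_{t'},\wt\xi_{t'})$. Applying the $\sim$-functor to $\grf\shM=p^*\grf\bar\cM$ yields the pullback formula $\Ch_{\log}(\shM)=dp^\#(\wt W\times_W\Ch_{\log}(\bar\cM))$. Restricting to $\{y=0\}$, where $p$ becomes $\id_W$ and $dp^\#$ becomes $\tilde\iota$, gives $\Ch_{\log}(\shM)|_{\{y=0\}}=\tilde\iota(\Ch_{\log}(\bar\cM))$.

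The main technical care needed is in justifying the identification $\grf\shM=p^*\grf\bar\cM$ with compatible actions of both graded rings, which uses smoothness of $p$ exactly as in Lemma \ref{lm:pblogrel}. The harder of the two parts conceptually is (1), where one must rule out log characteristic components with $\wt\xi_t\neq \xi_y$; this is achieved solely through the symbol identity $\wt\xi_t^{(i)}=\xi_y^{(i)}$ on $\grf\shM$ derived above. Part (2) then follows almost tautologically from (1) together with the log cotangent pullback calculation, since both inclusions $\tilde\iota(\Ch_{\log}(\bar\cM))\subseteq \Ch_{\log}(\shM)|_{\{y=0\}}$ and the reverse follow by tracing the pullback formula through the restriction $\{y=0\}\hookrightarrow \wt W$.
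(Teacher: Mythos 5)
Your argument is correct, and part (1) is essentially the paper's own proof: both rest on the observation that $t_i\partial_{t_i}-\partial_{y_i}$ preserves the pulled-back filtration, so the symbol $\wt\xi_{t_i}-\xi_{y_i}$ kills $\grf\shM$, and that this single graded module computes both $\Ch_{\log}(\shM)$ and $\Ch_{\rel}(\shM)$ because $p^*F_\bullet\bar\cM$ is good over both filtered rings (Lemma \ref{lm:pblogrel}). For part (2), however, you take a genuinely different route. The paper argues by a backwards induction through the intermediate families $\wt W_j$: it restricts one auxiliary variable at a time along $y_j=0$, uses that $\sM$ and $\grf\sM$ have no $y_j$-torsion to identify $\grf(i^*\sM)\simeq i^*(\grf\sM)\simeq p_{j-1}^*(\grf\bar\cM)$, and then invokes the relation $\wt\xi_{t_j}=\xi_{y_j}$ at each stage to recognize the restricted support as $\wt\iota(\Ch_{\log}(\sM_{j-1}))$. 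You instead prove the stronger one-shot formula $\Ch_{\log}(\shM)=dp^\#\bigl(\wt W\times_W\Ch_{\log}(\bar\cM)\bigr)$, exploiting that $p$ is a morphism of log pairs, that $dp^\#$ is a closed immersion onto $\{\wt\xi_t=\xi_y\}$ (since $p^*(dt'_i/t'_i)=dt_i/t_i+dy_i$), and that the $\grf\shD_{\wt W,\wt D}$-module structure on $\grf\shM\simeq p^*\grf\bar\cM$ is exactly the one induced through the comorphism of $dp^\#$ (both $\wt\xi_{t_i}$ and $\xi_{y_i}$ acting through $\wt\xi_{t'_i}$); restricting to $\{y=0\}$ then yields (2) in one step. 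What each approach buys: yours avoids the induction and any torsion discussion, since for supports one only needs $\supp(p^*\cG)=p^{-1}\supp(\cG)$, valid for every coherent $\cG$, and it makes the underlying geometry --- log pullback of the characteristic variety along $p$ --- explicit; the paper's induction keeps the bookkeeping at the level of one extra variable at a time, and the torsion-freeness it invokes is in any case automatic here because $\sM$ and $\grf\sM$ are $\sO$-pullbacks along the submersion $p$. The only point to keep explicit in your write-up is the one you flagged: the identification $\grf\shM\simeq p^*\grf\bar\cM$ with the stated actions of $\wt\xi_{t_i}$ and $\xi_{y_i}$, which is precisely the computation carried out in the proof of Lemma \ref{lm:pblogrel}.
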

\begin{proof}
For Part(1), since for any section $s$ in $p^{-1}(\bar\cM)$, $t_i \pa_{t_i} - \pa_{y_i}$ annihilate $s$, hence on the level of the associated graded modules, we have $\wt \xi_{t_i} - \xi_{y_i}$ annihilate $\gr^F_\bullet(\shM)$. 

Now we prove Part(2). By Lemma \ref{lm:pblogrel}, we have
$$\grf(i_r^*(\sM))\simeq i_r^*(\grf(\sM))\simeq  p_{r-1}^*(\grf\bar\cM).$$
Meanwhile, since $\grf(\sM)$ and $\sM$ both have no $y_r$-torsion, we have 
\[\wt\supp_{i^*_r(\grf\shD_{\wt W,\wt D})}(i_r^*(\grf(\sM)))=\Ch_{\log}(\shM)|_{\{y_r=0\}}\subseteq T^*_{\log} \wt W|_{y_r=0}.\]
Since $\wt \xi_{x_r} - \xi_{y_r}$ annihilates $\gr^F_\bullet(\shM)$, we further have 
\[\wt\supp_{i^*_r(\grf\shD_{\wt W,\wt D})}(i_r^*(\grf(\sM)))=\wt\iota(\Ch_{\log}(\sM_{r-1})).\]
We then do induction backwards until we obtain Part (2).
\end{proof}

Finally, we describe the closure in log cotangent bundle. 
\begin{lemma}\label{lm:unpackagelogcl}
Let $Y \subset U$ be a closed stratum, and let $\wt Y = \overline{p^{-1}(Y)} \subset \wt W$.  Then $\tilde\iota(\overline{T_Y^*U}^{\log}) = \iota(T^*_{\pi_t\mid \wt Y} (\wt W / \D_t^r))|_{\{y=0\}}$. 
\end{lemma}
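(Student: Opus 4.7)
The plan is to verify the equality at generic smooth points of $Y$ inside $U$ and then extend by closure. The conceptual reason for the identity is the pullback formula $p^*(dt_i/t_i) = dt_i/t_i + dy_i$, which makes $\iota(T^*_\rel \wt W)$ the locus in $T^*_{\log}\wt W$ cut out by $\wt\xi_t = \xi_y$ and identifies it with the pullback $p^{*}T^{*}_{\log}W$ of log covectors from $W$.

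First I would pick a smooth point $(x_0, t_0) \in Y^{\textup{sm}} \cap U$, so $t_0 \in (\D^\star)^r$, and compare both sides at the lifted point $z = (x_0, t_0, 0) \in p^{-1}(Y)$. Parametrising the fiber $\wt Y_{t_0}$ near $z$ by $(x, y)$ subject to $(x, e^y t_0) \in Y$ gives
\[
T_z \wt Y_{t_0} = \{(\dot x, \dot y) : (\dot x,\, t_0 \cdot \dot y) \in T_{(x_0, t_0)} Y\}.
\]
A relative covector $\xi_x\,dx + \xi_y\,dy$ annihilates this tangent space iff $\xi_x\,dx + (\xi_y / t_0)\,dt$ lies in $N^*_Y U|_{(x_0, t_0)}$ (where $\xi_y/t_0$ is componentwise); passing to log coordinates via $\wt\xi_{t,i} = t_{0,i}\xi_{t,i}$ transforms this condition into $\wt\xi_t = \xi_y$, which is precisely the image equation of $\iota$. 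Hence both sides agree on the generic locus coming from $Y^{\textup{sm}} \cap U$ at $y = 0$.

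Next, both sides are closures of this common generic locus in $T^*_{\log}\wt W$. For the LHS this is immediate from the definition of $\overline{\,\cdot\,}^{\log}$ together with the fact that $\tilde\iota$ is a closed embedding. For the RHS, $T^*_{\pi_t|\wt Y}(\wt W/\D_t^r)$ is, by definition, the closure in $T^*(\wt W/\D_t^r)$ of the relative conormal over the smooth locus, and the main obstacle is to see that this closure commutes with restriction to $\{y = 0\}$. I would handle this via the $\C^r$-action
\[
\sigma_a \colon \wt W \to \wt W,\quad (x, t, y) \mapsto (x, e^{-a}t, y + a),
\]
which satisfies $p \circ \sigma_a = p$, hence preserves $\wt Y$, covers the action $t \mapsto e^{-a}t$ on $\D_t^r$, and (by direct computation on differentials) fixes the relative cotangent coordinates $\xi_x, \xi_y$. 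Consequently, any approximating sequence of generic relative-conormal points $(x^n, t^n, y^n, \xi^n_x, \xi^n_y) \to (x_0, t_0, 0, \xi_x, \xi_y)$ can be replaced by $\sigma_{-y^n}(x^n, t^n, y^n, \xi^n_x, \xi^n_y) = (x^n, e^{y^n}t^n, 0, \xi^n_x, \xi^n_y)$, which still lies in the generic relative conormal, is now at $\{y = 0\}$, and converges to the same limit. This upgrades the generic equality to the claimed equality of closures.
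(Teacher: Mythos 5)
Your proof is correct, and its generic-point computation is the same in substance as the paper's: the paper exhibits generators of the relative conormal as $d_{/\pi_t}(p^{-1}f)=(\pa_x f)\,dx+t(\pa_t f)e^y\,dy$ for local defining functions $f$ of $Y$, applies $\iota$, and sets $y=0$ to recover $df$ in the log frame $dx$, $dt/t$; you dualize this, checking on tangent vectors of the fibers $\wt Y_{t_0}$ that a relative covector annihilates them iff the corresponding log covector lies in the conormal of $Y$, with the same conclusion $\wt\xi_t=\xi_y$. Where you genuinely add something is the closure step: the paper disposes of it with ``the argument works generically; taking closure, the proof is done by the construction of relative conormal spaces,'' which implicitly requires that restricting the closure of the generic relative conormal to $\{y=0\}$ gives nothing more than the closure of its $\{y=0\}$ part. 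Your translation trick $\sigma_a(x,t,y)=(x,e^{-a}t,y+a)$, which satisfies $p\circ\sigma_a=p$, hence preserves $p^{-1}(Y)$ and the generically defined relative conormal, and acts trivially on the fiber coordinates $(\xi_x,\xi_y)$, turns this into an honest argument: any sequence of generic relative-conormal points converging to a point over $\{y=0\}$ can be slid to $y=0$ without changing its limit, so the limit already lies in $\overline{\tilde\iota(T^*_{Y^{\mathrm{sm}}}U)}=\tilde\iota(\overline{T^*_YU}^{\log})$. The only caveat is cosmetic: $\sigma_a$ is not globally defined on the polydisc $\wt W$, but since $y^n\to 0$ and the limit lies in $\wt W$, the translated points $(x^n,e^{y^n}t^n,0)$ land in $\wt W$ for $n\gg 0$, which is all you use. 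So your write-up proves the same statement by the same local computation while making explicit the exchange of closure and restriction that the paper leaves to the reader.
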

\begin{proof}
Let us unpackage the definition of $\wt Y$. For each local function $f(x,t) \in \cO_U$ that vanishes on $Y$, we consider $f(x, e^y t)$ on $\wt W$. The relative conormals with a general $t$ fixed, $T^*_{\pi_t\mid \wt Y} (\wt W / \D_t^r)$,  are generated by the relative differentials
$$ d_{/\pi_t}(p^{-1}f)= (\pa_x f) dx + t (\pa_t f) e^y dy. $$
After we apply $\iota$, we get
$$ \iota(d_{/\pi_t}(p^{-1}f)) = (\pa_x f) dx + t (\pa_t f) e^y \frac{dt}{t} +  t (\pa_t f) e^y dy. $$ 
Then we restrict to $\{y=0\}$, meaning setting $y_i=0$, and forget $\xi_{y_i}$. 
\[ \iota(d_{/\pi_t}(p^{-1}f))|_{\{ y=0\}} = (\pa_x f) dx + t (\pa_t f) \frac{dt}{t}. \]
Indeed, this gives us back the $df$ in the basis section of $T_{\log}^* W$, i.e. $dx$ and $dt/t$. 

The above argument works generically. Taking closure, the proof is done by the construction of relative conormal spaces. 
\end{proof}

Now the proof of Theorem \ref{thm:CClogC} is accomplished by combining Proposition \ref{prop:logrelsp} and Lemma \ref{lm:unpackagelogcl}.

\section{Relative $\shD$-modules and $V$-filtrations}\label{sec:vfilreld}
In this section, we discuss the Kashiwara-Malgrange filtrations for $\shD$-modules in the general sense of Sabbah by using relative $\shD$-modules. 
For simplicity, we focus on the algebraic category unless stated otherwise, that is, all the underlying spaces and sheaves on them are algebraic in this section. See Remark \ref{rmk:gagavfil} for the analytic case. 

\subsection{Kashiwara-Malgrange filtrations}\label{subsec:KMVsp}
\begin{definition}\label{def:KMalongY}
Suppose that $X$ is a smooth complex variety and $Y$ is a smooth subvariety of $X$ with its ideal sheaf denoted by $\sI_Y$. Then the Kashiwara-Malgrange filtration on $\shD_X$ is a $\Z$-indexed increasing filtration defined by 
\[V^Y_k\shD_X\coloneqq\{P\in \shD_X | \quad P\sI_Y^{j}\subseteq \sI_Y^{j-k} \textup{ for every } j\in \Z\}\]
where $\sI_Y^{j}=\sO_X$ if $j\le 0$.\footnote{In the literature, some authors define the Kashiwara-Malgrange filtration on $\shD_X$ as the decreasing filtration, that is, $V_Y^k\shD_X\coloneqq V^Y_{-k}\shD_X$.} In particular, if $Y=H$ is a smooth hypersurface $V^Y_0\shD_X=\shD_{X,H}$, the sheaf of logarithmic differential operators along $H$.

We then define the associated Rees ring by
\[R^Y_V\shD_X\coloneqq \bigoplus_{k\in \Z}V^Y_k\shD_X\cdot u^k\subseteq \shD_X[u,1/u],\]
where the independent variable $u$ is used to help remember the grading.
\end{definition}
\begin{definition}\label{def:fltV}
Suppose that $\cM$ is a (left) $\shD_X$-module. A $\Z$-indexed increasing filtration $\Omega_\bullet\cM$ is \emph{compatible} with $V_\bullet\shD_X$ if
\[V^Y_k\shD_X\cdot \Omega_j\cM\subseteq \Omega_{k+j}\cM \textup{ for all } k,j\in \Z.\]
A compatible filtration $\Omega_\bullet\cM$ is a \emph{good} filtration over $V_\bullet\shD_X$ if the associated Rees module 
\[R_\Omega\cM\coloneqq \bigoplus_{k\in \Z}\Omega_k\cM\cdot u^k\subseteq \cM[u,1/u]\]
is coherent over $R^Y_V\shD_X$. A good filtration $V_\bullet\cM$ is called the Kashiwara-Malgrange filtration on $\cM$ if there exists a monic polynomial $b(s)\in\bbC[s]$ with its roots having real parts in $[0,1)$ so that 
\[b(\sum_it_i\partial_{t_i}+k)\textup{ annihilates } \gr^{V^Y}_k\cM\coloneqq V_k\cM/V_{k-1}\cM \textup{ for each } k\in\Z, \]
where $(t_1=t_2=\cdots=t_r=0)$ locally defines $Y$ and $\partial_{t_i}$ are the local vector field along the smooth divisor $(t_i=0)$.
The monic polynomial $b(s)$ of the least degree is called the Bernstein-Sato polynomial or $b$-function of $\cM$ along $Y$.
\end{definition}
One can check that the Kashiwara-Malgrange filtration on $\cM$ is unique if exists. It is obvious that the existence of the Kashiwara-Malgrange filtration on $\cM$ guarantees that $\cM$ is coherent over $\shD_X$. A coherent $\shD_X$-module $\cM$ is called \emph{specializable} along $Y$ if the Kashiwara-Malgrange filtration exists along $Y$. Furthermore, it is called $R$-\emph{specializable} if the $b$-function $b(s)$ has roots in $R$, where $R$ is a subring of $\bbC$. 

\begin{theorem}[Kashiwara]\label{thm:sphol}
If $\cM$ is holonomic over $\shD_X$, then it is specializable along every submanifold $Y\subseteq X$. 
\end{theorem}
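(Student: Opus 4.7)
The plan is to work locally around a point of $Y$, construct a good $V$-filtration by a Rees-style construction, extract a Bernstein-Sato polynomial from the holonomicity of the graded module on the normal bundle to $Y$, and finally refine the filtration so that the roots of $b(s)$ have real parts in $[0,1)$.

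Since specializability is a local property, I would choose coordinates $(x,t)=(x_1,\dots,x_{n-r},t_1,\dots,t_r)$ near a point of $Y$ so that $Y=(t_1=\cdots=t_r=0)$. Using local coherence of $\cM$ and noetherianity of $V_0^Y\shD_X$, pick a coherent $V_0^Y\shD_X$-submodule $\cM_0\subseteq\cM$ with $\shD_X\cdot\cM_0=\cM$, giving a good $V$-filtration $U_k\cM\coloneqq V_k^Y\shD_X\cdot\cM_0$ along $Y$. The associated Rees module $R_U\cM$ is coherent over $R^Y_V\shD_X$, and the associated graded $\gr^U\cM$ is coherent over $\gr^V\shD_X$, which is canonically the sheaf of algebraic differential operators on the normal bundle $T_YX\to Y$.

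The main obstacle is producing a polynomial $b(s)\in\C[s]$ such that $b(\theta+k)$ annihilates $\gr^U_k\cM$ for every $k$, where $\theta=\sum_it_i\partial_{t_i}$. The idea is: by bounding the characteristic variety of $\gr^U\cM$ in terms of the limit of $\Ch(\cM)$ and invoking Bernstein's inequality on $T_YX$, $\gr^U\cM$ is holonomic as a $\shD_{T_YX}$-module. It is moreover $\C^\star$-monodromic with respect to the fiberwise scaling action on $T_YX$, since the grading on $\gr^U\cM$ corresponds to weights under this $\C^\star$-action. For a monodromic holonomic $\shD$-module on a vector bundle, the infinitesimal generator $\theta$ of the scaling action has only finitely many eigenvalues modulo $\Z$ on each graded piece (a standard consequence of the classical Bernstein-Sato theorem for holonomic modules, or of the structural decomposition of monodromic modules). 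This produces the required polynomial $b(s)$. The difficulty in this step is essential: without holonomicity, the $\theta$-action on $\gr^U\cM$ could have infinitely many eigenvalues and no polynomial identity need exist.

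Finally, once such $b(s)$ exists, write its roots as $\alpha_j=n_j+\beta_j$ with $n_j\in\Z$ and $\mathrm{Re}(\beta_j)\in[0,1)$, and shift $U_\bullet$ by appropriate integers to absorb the integer parts. This yields a good $V$-filtration whose $b$-function has all roots with real parts in $[0,1)$, which is the Kashiwara-Malgrange filtration of $\cM$ along $Y$, establishing specializability. Uniqueness of this refined filtration (implicit in the terminology ``the'' Kashiwara-Malgrange filtration) follows from a standard comparison of any two good $V$-filtrations whose $b$-functions have roots confined to $[0,1)$.
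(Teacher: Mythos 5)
Your skeleton (choose a good $V$-filtration locally, show the associated graded is holonomic on $T_YX$, extract a minimal polynomial for the Euler operator, then renormalize the roots) is the same as the paper's, which proves the more general slope version in Theorem \ref{thm:Lsphol}. The genuine gap is in the step you yourself flag as essential: the holonomicity of $\gr^U_\bullet\cM$ over $\gr^V_\bullet\shD_X$. You propose to get the upper bound $\dim\Ch(\gr^U_\bullet\cM)\le n$ by ``bounding the characteristic variety of $\gr^U\cM$ in terms of the limit of $\Ch(\cM)$,'' i.e.\ by the inclusion of $\Ch(\gr^U_\bullet\cM)$ in the normal-cone specialization of $\Ch(\cM)$. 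But that geometric containment is exactly the Ginsburg--Sabbah type statement (Theorem \ref{thm:relccL}, proved via Lemma \ref{lm:BMM}) which the paper establishes only for \emph{regular} holonomic modules, and the paper's example $\cM=\C[t,1/t]\cdot e^{1/t}$ shows that in the irregular case vertical components can appear even for torsion-free (indeed $t$-divisible) modules: its log characteristic variety has a component over $H$ not contained in the closure of $\Ch(\cM|_U)$. Since Theorem \ref{thm:sphol} is asserted for arbitrary holonomic $\cM$, an argument resting on this containment (or on flatness of the Rees module over $\C[u]$, which likewise does not exclude components over $u=0$) does not go through as stated. The paper sidesteps the geometry entirely: it invokes Bj\"ork's homological result that the grade number is preserved when passing to the associated graded of a good filtration over $^LV_\bullet\shD_X$ (Appendix IV, Theorem 4.10 in Bj\"ork), so that $j_{\gr^{V}_\bullet\shD_X}(\gr^{U}_\bullet\cM)=j(\cM)=n$ and holonomicity follows without any regularity hypothesis. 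You would need either this homological input or a bifiltration argument; the specialization of characteristic varieties cannot carry the load in the irregular case.

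Two smaller points. First, your derivation of monodromicity is off: a grading of $\gr^U_\bullet\cM$ compatible with the grading of the ring only says that the \emph{grading operator} acts semisimply with integer weights; the Euler vector field $\theta=\sum_i t_i\partial_{t_i}$ is a different operator, and local finiteness of its action is precisely what has to be proved. The correct (and short) argument, as in the paper, is that $\bigoplus_k(\theta+k)$ is a $\gr^V_\bullet\shD_X$-linear endomorphism of the holonomic module $\gr^U_\bullet\cM$ and therefore admits a minimal polynomial $b(s)$. Second, normalizing the roots of $b(s)$ into $[0,1)$ cannot be achieved by shifting $U_\bullet$ by integers: a uniform shift translates all roots by the same integer and cannot separate roots differing by nonzero integers. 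One must refine the filtration using the generalized eigenspace decomposition of $\theta$ on the graded pieces, as in the procedure of Kashiwara's proof of \cite[Theorem 1(1)]{KasV} cited in the paper; after that, uniqueness and gluing give the global Kashiwara--Malgrange filtration.
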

We will give a proof of the above fundamental theorem under more general settings (cf. Theorem \ref{thm:Lsphol}).

We now recall the definition of nearby cycles and vanishing cycles along smooth hypersurfaces.
\begin{definition}
Suppose that $H$ is a smooth hypersurface and it is defined by $(t=0)$ locally.
We assume that $\cM$ is specializable along $H$ with $V_\bullet\cM$ its Kashiwara-Malgrange filtration. Then the nearby cycle of $\cM$ along $H$ is defined by 
\[\Psi_H(\cM)\coloneqq \gr^V_0\cM\]
and the vanishing cycle is 
\[\Phi_H(\cM)\coloneqq \gr^V_{1}\cM.\]
\end{definition}
Since the morphism 
\[t:\gr^V_k\cM\longrightarrow \gr^V_{k-1}\cM\]
is an isomorphism of $\shD_H$-modules for all $k\not=0$, we then have \[\Phi_H(\cM)\simeq \gr^V_k\cM\]
for $k>0$ and 
\[\Psi_H(\cM)\simeq \gr^V_k\cM\]
for $k\le -1$.
For a perverse sheaf $K$ (with complex coefficients) on $X$, we use $\psi_t(K)$ and $\phi_t(K)$ to denote the nearby cycle and vanishing cycle of $K$ along $H=(t=0)$ respectively. Let us refer to \cite[\S 8.6]{KSbook} for their definitions.

The following theorem of Kashiwara is the Riemann-Hilbert correspondence of nearby and vanishing cycles.
\begin{theorem}\cite[Theorem 2]{KasV}\label{thm:holnb}
Suppose that $\cM$ is regular holonomic and $V_\bullet\cM$ is the Kashiwara-Malgrange filtration along a smooth hypersurface $H=(t=0)$. Then 
\begin{enumerate}
    \item $\gr^V_k\cM$ is a regular holonomic $\shD_H$-module for every $k\in \Z$,
    \item $\DR_H(\Psi_H(\cM))\simeq \psi_t(\DR_X\cM)$ and  $\DR_H(\Phi_H(\cM))\simeq \phi_t(\DR_X\cM).$
\end{enumerate}
\end{theorem}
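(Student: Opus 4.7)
Since both assertions are local in a neighborhood of $H$, I would first reduce to the situation $X = H \times \Delta_t$ with $H = (t=0)$, using that the formation of $V$-filtrations, the functors $\Psi_H, \Phi_H, \psi_t, \phi_t$, and $\DR$ all commute with restriction to open subsets.

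Part (1) follows essentially from Theorem \ref{thm:relccL} applied in the codimension-one case $r=1$, $L=(1)$: the Rees module $R_V\cM$ associated to a good $V$-filtration is a coherent $R_V^H\shD_X$-module, and after passing to the normal deformation $\widetilde X^L \to \C$ this becomes a relative regular holonomic $\shD$-module. Its central fiber is the $\shD_{T_HX}$-module $\widetilde\gr^V_\bullet\cM$, which is regular holonomic by Theorem \ref{thm:relccL}. Each homogeneous piece $\gr^V_k\cM$ is then a coherent $\shD_H$-module on which $t\partial_t - k$ acts with finitely many generalized eigenvalues, controlled by the $b$-function; holonomicity follows from the dimension bound on the characteristic variety, and regularity along $H$ is inherited from the regular holonomicity of $\widetilde\gr^V_\bullet\cM$.

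For part (2), my plan is to match both sides on a generating class and then propagate by functoriality. By Hironaka and Kashiwara's structure theorem, every regular holonomic $\cM$ is, locally, a subquotient of a direct image $f_+\cN$ where $f\colon Y \to X$ is proper, $Y$ is smooth, and $\cN$ is of the form $\cO_Y(*E)$ twisted by a character of the fundamental group of $Y\setminus E$, with $E\subset Y$ a normal crossing divisor. On such a building block $\cN$, both $\DR_H\circ\Psi_H$ and $\psi_t\circ\DR$ are computable explicitly: on the $\shD$-module side as a logarithmic Koszul complex in the operators $t\partial_t - \alpha$ and the log derivatives along $E$, and on the constructible side from the Milnor fibration of $t\circ f$ combined with a standard local system computation. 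The two sides are seen to match after decomposing into the generalized $e^{-2\pi i\alpha}$-eigenspaces of the monodromy. To conclude, I would invoke compatibility of the $V$-filtration with proper direct image (Malgrange/Kashiwara) on the $\shD$-module side, and proper base change on the constructible side, to propagate the base-case quasi-isomorphism through $f_+$ to general regular holonomic $\cM$.

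The main obstacle is precisely the base case in part (2): since $\psi_t$ is defined topologically via Milnor fibers while $\Psi_H$ is defined algebraically through the $V$-filtration, no formal manipulation links them. The content is Malgrange's explicit computation that, for Deligne-type extensions, the generalized $e^{-2\pi i\alpha}$-eigenspace of the monodromy on $\psi_t\DR(\cN)$ coincides with $\DR_H(\gr^V_\alpha\cN)$ after the $\Q$-refinement of the $V$-filtration, reducing to the unipotent case by twisting by $t^\alpha$. Once this bridge is in place, the propagation through direct images and subquotients is largely formal and both parts of the theorem follow.
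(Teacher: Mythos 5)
First, a point of reference: the paper does not prove this statement at all — it is quoted as Kashiwara's theorem \cite[Theorem 2]{KasV}, and several results in the paper (notably Corollary \ref{cor:grlhol} and the regularity part of Theorem \ref{thm:relccL}) are built on top of it. This is where your proposal runs into a genuine problem. For part (1) you invoke Theorem \ref{thm:relccL}; but inside this paper the regularity assertion of Theorem \ref{thm:relccL} is obtained precisely by combining Lemma \ref{lm:Lgrnearby} with Theorem \ref{thm:holnb} itself (that is how Corollary \ref{cor:grlhol} is proved), so your argument for part (1) is circular. What you can get independently is holonomicity of $\gr^V_\bullet\cM$ over $\gr^V_\bullet\shD_X\simeq\pi_*\shD_{T_HX}$ via the graded-number argument used in Theorem \ref{thm:Lsphol} (Bj\"ork), but the \emph{regularity} of $\gr^V_k\cM$ is exactly the nontrivial content of Kashiwara's theorem and cannot be borrowed from results that sit downstream of it.

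For part (2), your outline — reduce by Kashiwara's structure theorem to twisted $\sO_Y(*E)$ on a normal crossing pair, carry out Malgrange's explicit monodromy-eigenspace computation there, then propagate through proper direct images and subquotients — is essentially the classical route of Kashiwara and Mebkhout--Sabbah, so the strategy is sound; but two steps you call ``largely formal'' are not. First, after composing with $f$ the relevant hypersurface $(t\circ f=0)$ in $Y$ is in general neither smooth nor reduced, so the $\shD$-module nearby cycle on $Y$ must be defined via the graph embedding of $t\circ f$, and the compatibility of the $V$-filtration (hence of $\Psi$) with proper direct images is itself a substantial theorem, not a formal consequence; you should cite it explicitly as an input, on par with the base-case computation. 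Second, to descend from $f_+\cN$ to a subquotient $\cM$ you need a \emph{natural} comparison morphism $\DR_H\Psi_H(-)\to\psi_t\DR_X(-)$ defined on the whole category, together with exactness of both functors; an object-by-object isomorphism on generators does not pass to subquotients by itself. Kashiwara constructs such a canonical morphism and then checks it is an isomorphism on the building blocks; without that naturality your propagation step does not close.
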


\subsection{Algebraic normal deformation}\label{subsec:normdef}
We now recall the normal deformation algebraically; see \cite[\S 4.1]{KSbook} for the topological construction.

Suppose that $Y\subseteq X$ is a smooth subvariety with the ideal sheaf $\sI_Y$.
We algebraically define a space by 
\[\widetilde X_Y\coloneqq \Spec[\bigoplus_{k\in\Z}\sI^{-k}_Y\otimes u^k],\]
where $u$ is an independent variable giving a $\bbC^\star$-action on $\widetilde X_Y$. Then the natural inclusion $\bbC[u]\hookrightarrow \bigoplus_{k\in\Z}\sI^{-k}_Y\otimes u^k$ gives rise to a smooth family 
\[\varphi_Y\colon \widetilde X_Y\to \bbC\]
so that 
\begin{enumerate}
    \item $\varphi_Y^{-1}(u)\simeq X$ if $0\not=u\in\bbC$;
    \item $\varphi_Y^{-1}(0)=\Spec[\bigoplus_{k\in\Z^{\ge0}}\sI^{k}_Y/\sI_Y^{k+1}]\eqqcolon T_YX$, the algebraic normal bundle of $Y\subseteq X$.
\end{enumerate}

By the construction of $T_YX$ and $V^Y_\bullet\shD_X$, one easily observes 
\be\label{eq:idgrvd}
\gr_\bullet^{V^Y}\shD_X\simeq \pi_*\shD_{T_YX},
\ee
where $\pi:T_YX\to X$ is the natural affine morphism (see \cite[\S II.10]{Bj} for the analytical case). 

Under the identification \eqref{eq:idgrvd}, $\sum t_i\partial_{t_i}$ gives a global section of $\shD_{T_YX}$ corresponding to the radial vector field of the bundle $T_YX$ (with respect to the natural $\bbC^\star$-action on $T_YX$), denoted by $\sum\overline{ t_i\partial}_{t_i}\in \gr^{V^Y}_\bullet\shD_X$. In particular, $\sum\overline{ t_i\partial}_{t_i}$ is independent of choices of $t_1,\dots,t_r$. 

Now we assume that $\cM$ is a coherent $\shD_X$-module with a good filtration $\Omega_\bullet\cM$ over $V^Y_\bullet\shD_X$. 
Since $\pi$ is affine, we get a coherent $\shD_{T_YX}$-module $\widetilde\gr^\Omega_\bullet\cM$ after applying the $\sim$-functor. We then say that $\gr^\Omega_\bullet\cM$ is holonomic (resp. regular holonomic) over $\gr^{V^Y}_\bullet\shD_X$ if $\widetilde\gr^\Omega_\bullet\cM$ is so over $\shD_{T_YX}$.

The deformation $\varphi_Y$ induces a deformation from $T^*X$ to $T^*T_YX$ as follows. We consider the relative cotangent bundle of $\varphi_Y$:
$$T^*\varphi_Y: T^*(\wt X_Y/\bbC)\to \bbC.$$
The fiber of $T^*\varphi_Y$ over $0$ is $T^*T_YX$ and $T^*(\wt X_Y/\bbC)\setminus T^*T_YX\simeq T^*X\times \bbC^\star$. 

The following lemma is essentially due to Sabbah (see \cite[Lemme 2.0.1]{Sab}). We rephrase it algebraically. 
\begin{lemma}[Sabbah]\label{lm:Rvrel}
For a smooth subvariety $Y\subseteq X$, we have a natural isomorphism 
\[R^Y_V\shD_X\simeq \shD_{\wt X_Y/\bbC}.\]
\end{lemma}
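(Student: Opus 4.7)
The plan is to identify both $R^Y_V\shD_X$ and the pushforward $\pi_*\shD_{\wt X_Y/\bbC}$ (where $\pi\colon \wt X_Y \to X$ is the structure morphism of the relative spectrum of $\bigoplus_k \sI_Y^{-k}u^k$ over $X$) as subsheaves of the common ambient sheaf $\shD_X[u,u^{\pm 1}]$ of $\sO_X$-algebras on $X$, and to show that the two images coincide by a local computation. The inclusion $R^Y_V\shD_X \subseteq \shD_X[u,u^{\pm 1}]$ is definitional. On the deformation side, the open complement $\wt X_Y \setminus T_YX \simeq X\times\bbC^\star$ carries relative $\shD$-sheaf equal to $\shD_X[u,u^{\pm 1}]$; restriction from $\wt X_Y$ to this dense open is injective because $\shD_{\wt X_Y/\bbC}$ is $\sO$-torsion-free, producing the required embedding of $\pi_*\shD_{\wt X_Y/\bbC}$. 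Applying the $\sim$-functor converts the resulting identification on $X$ into the claimed isomorphism of sheaves on $\wt X_Y$, in direct parallel with Eq.\eqref{eq:idgrvd}.

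I verify the matching locally. Choose coordinates $(x_1,\dots,x_{n-r},t_1,\dots,t_r)$ on $X$ with $Y=(t_1=\cdots=t_r=0)$. The extended Rees algebra of $\sI_Y=(t_1,\dots,t_r)$ is $\sO_X[u,\,t_1u^{-1},\dots,t_ru^{-1}]$, so $\wt X_Y$ acquires local coordinates $(x_i,\tau_j\coloneqq t_ju^{-1},u)$ with $t_j=u\tau_j$ and $\varphi_Y=u$. The relative tangent sheaf $\shTA_{\wt X_Y/\bbC}$ is free on $\partial_{x_i},\partial_{\tau_j}$, and a chain-rule computation using $t_j=u\tau_j$ gives $\partial_{\tau_j}=u\partial_{t_j}$ inside $\shD_{(X\times\bbC^\star)/\bbC^\star} = \shD_X[u,u^{\pm 1}]$. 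Hence, locally as an $\sO_X$-algebra, $\pi_*\shD_{\wt X_Y/\bbC}$ is generated inside $\shD_X[u,u^{\pm 1}]$ by $u$, $t_ju^{-1}$, $\partial_{x_i}$, and $u\partial_{t_j}$.

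On the Rees side, expanding a general operator $P\in\shD_X$ in the form $\sum g_{\alpha,\beta,\gamma}(x)\,t^\beta\partial_x^\alpha\partial_t^\gamma$, a direct check of the condition $P\sI_Y^j \subseteq \sI_Y^{j-k}$ shows that $P\in V^Y_k\shD_X$ if and only if every nonzero term satisfies $|\gamma|-|\beta|\le k$. Consequently $R^Y_V\shD_X$ is generated locally as an $\sO_X$-algebra by $1\cdot u \in V^Y_1 u$, $t_j\cdot u^{-1} \in V^Y_{-1}u^{-1}$, $\partial_{x_i}\in V^Y_0$, and $\partial_{t_j}\cdot u \in V^Y_1 u$; inside $\shD_X[u,u^{\pm 1}]$ this last element equals $u\partial_{t_j}$. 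These are the same generators of $\pi_*\shD_{\wt X_Y/\bbC}$ obtained above, so the two subsheaves coincide locally. Since both constructions depend only on the pair $(X,Y)$, the local isomorphisms are canonical and glue to a global isomorphism. The main technical point is setting up the two embeddings into $\shD_X[u,u^{\pm 1}]$ in a compatibly $\sO_X$-linear way; once that bookkeeping is in place, matching the local generators is immediate and the rest follows formally.
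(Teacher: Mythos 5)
Your proof is correct and follows essentially the same route as the paper: a local-coordinate computation on the deformation space identifying the relative vector fields as $\partial_{x_i}$ and $\partial_{\tau_j}=u\partial_{t_j}$, followed by matching generators with the Rees ring inside $\shD_X[u,u^{\pm 1}]$ (the paper packages this as the graded decomposition \eqref{eq:decompdefXY} of $\shTA_{\wt X_Y/\bbC}$ plus generation by $\sO$ and vector fields). Your explicit verification of the local description of $V^Y_k\shD_X$ via the condition $|\gamma|-|\beta|\le k$ just spells out the generator check the paper leaves implicit; only note that the coefficients in that expansion should be allowed to depend on $t$ as well (or be taken as a convergent/formal series in $t$), which does not affect the argument.
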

\begin{proof}
We pick a (\'etale) local coordinates $(x_1,\dots, x_{n-r},t_1,\dots, t_r)$ so that $(t_1=t_2=\cdots=t_r=0)$ defines $Y$, where $n=\dim X$ and $r$ is the codimension of $Y\subseteq X$ (cf. \cite[\S A.5]{HTT}). Since $d(t^k_i\cdot u)=kt_i^{k-1}dt\cdot u$ (taking differentials over $\bbC[u]$), we have a local decomposition of the relative cotangent sheaf
\[\Omega^1_{\wt X_Y/\bbC}=\bigoplus_{k\in \Z}\sI_Y^{-k}(\bigoplus_{i=1}^r dt_i\otimes u^{k-1}\bigoplus_{j=1}^{n-r}dx_j\otimes u^{k})\]
and hence 
\be\label{eq:decompdefXY}
\shTA_{\wt X_Y/\bbC}=\bigoplus_{k\in \Z}\sI_Y^{-k}(\bigoplus_{i=1}^r \partial_{t_i}\otimes u^{k+1}\bigoplus_{j=1}^{n-r}\partial_{x_j}\otimes u^{k}).
\ee
Since $\shTA_{\wt X_Y/\bbC}$ and $\sO_{\wt X_Y}$ generate $\shD_{\wt X_Y/\bbC}$, the required isomorphism then follows. 
\end{proof}
By Lemma \ref{lm:Rvrel}, we immediately have:
\begin{prop}\label{prop:grrelsp}
\[T^*(\wt X_Y/\bbC)=\Spec [\gr^F_\bullet R^Y_V\shD_X],\]
where $F_\bullet(R^Y_V\shD_X)$ is the order filtration for relative differential operators induced from the order filtration on $\shD_X$.
\end{prop}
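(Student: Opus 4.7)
The plan is to reduce the statement to Lemma \ref{lm:Rvrel} combined with the standard computation of $\gr^F_\bullet\shD_{\mathcal{X}/\mathcal{S}}$ for a smooth morphism. First I would transport the order filtration on $R^Y_V\shD_X$ across the isomorphism $R^Y_V\shD_X\simeq \shD_{\wt X_Y/\bbC}$ of Lemma \ref{lm:Rvrel} and verify that it coincides with the order filtration on the sheaf of relative differential operators for $\varphi_Y\colon \wt X_Y\to \bbC$. Concretely, an element $P\cdot u^k\in V^Y_k\shD_X\cdot u^k$ is tautologically assigned the order of $P$ as a differential operator on $X$; on the other hand, from the local decomposition \eqref{eq:decompdefXY} the generators $\partial_{t_i}\otimes u^{k+1}$ and $\partial_{x_j}\otimes u^k$ of $\shTA_{\wt X_Y/\bbC}$ match exactly the generators $t_i\partial_{t_i}, \partial_{t_i}\cdot u, \partial_{x_j}$, etc. of $V^Y_\bullet\shD_X$ at the level of elements of first order, so the two filtrations are identified as ring filtrations.

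Next I would invoke the fact that for any smooth morphism $\varphi\colon \mathcal{X}\to \mathcal{S}$ one has a canonical isomorphism of graded $\sO_\mathcal{X}$-algebras
\[
\gr^F_\bullet \shD_{\mathcal{X}/\mathcal{S}} \simeq \Sym_{\sO_\mathcal{X}} \shTA_{\mathcal{X}/\mathcal{S}},
\]
which is the relative version of PBW recalled in \S\ref{subsec:relchcc}. Taking the relative spectrum over $\wt X_Y$ of this symmetric algebra yields the relative cotangent bundle $T^*(\wt X_Y/\bbC)$ by definition. Composing with the identification from the previous step gives the required equality $T^*(\wt X_Y/\bbC)=\Spec[\gr^F_\bullet R^Y_V\shD_X]$.

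The only real point to be careful about is that the order filtration on $\shD_X$ really induces on the Rees ring $R^Y_V\shD_X\subseteq \shD_X[u,1/u]$ the relative order filtration on $\shD_{\wt X_Y/\bbC}$; once this is established locally in the \'etale coordinates used in the proof of Lemma \ref{lm:Rvrel}, the computation $\gr^F_\bullet\shD_{\wt X_Y/\bbC}\simeq \Sym_{\sO_{\wt X_Y}}\shTA_{\wt X_Y/\bbC}$ is formal. I expect this compatibility check—tracking the interaction between the $V$-filtration, the grading variable $u$, and the order filtration—to be the only mildly technical step; everything else is bookkeeping.
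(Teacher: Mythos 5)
Your argument is exactly the paper's: Proposition \ref{prop:grrelsp} is stated as an immediate consequence of Lemma \ref{lm:Rvrel}, i.e.\ one transports the induced order filtration across $R^Y_V\shD_X\simeq\shD_{\wt X_Y/\bbC}$ (using the local decomposition \eqref{eq:decompdefXY}) and then uses the standard fact from \S\ref{subsec:relchcc} that $\Spec$ of $\gr^F_\bullet$ of relative differential operators is the relative cotangent bundle. Your write-up just makes explicit the filtration-compatibility check that the paper leaves implicit, and it is correct.
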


\subsection{Side-change for Rees modules}\label{subsect:scrm}
We discuss side-changes for $R^Y_V\shD_X$-modules. 
The proof of Lemma \ref{lm:Rvrel} implies that 
the relative canonical sheaf of $\varphi_Y$ is 
\[\omega_{\wt X_Y/\bbC}=\bigoplus_{k\in \Z}\omega_X\otimes_{\sO_X}\sI_Y^{-k}\otimes u^{k-r},\]
where we use $\omega_-$ to denote the canonical sheaf of the smooth variety $-$. We then immediately have the side-change operators 
\[\omega_{\wt X_Y/\bbC}\otimes_\sO(\bullet)\colon\Mod^l(R^Y_V\shD_X)\longrightarrow \Mod^r(R^Y_V\shD_X)\]
and 
\[\omega^{-1}_{\wt X_Y/\bbC}\otimes_\sO(\bullet)\colon\Mod^r(R^Y_V\shD_X)\longrightarrow \Mod^l(R^Y_V\shD_X),\]
where $\Mod^l(R^Y_V\shD_X)$ (resp. $\Mod^r(R^Y_V\shD_X)$) is the abelian category of left (resp. right) $R_V\shD_X$-modules. Similar to the absolute case, the side-change operators give an equivalence between $\Mod^l(R^Y_V\shD_X)$ and $\Mod^r(R^Y_V\shD_X)$. 

Since $\omega_{\wt X_Y/\bbC}\otimes_{\bbC[u]} \bbC_0\simeq \omega_{T_YX}$ and $R^Y_V\shD_X\otimes_{\bbC[u]} \bbC_0\simeq \gr_\bullet^{V^Y}\shD_X$ ($\bbC_0$ is the residue field of $0\in \Spec\bbC[u]$), we immediately have the following commutative diagram
\be\label{eq:sideccmd}
\begin{tikzcd}
\Mod^l(R^Y_V\shD_X) \arrow[r, "\omega_{\tilde X_Y/\bbC}\otimes\bullet"]\arrow[d, "\bullet\otimes_{\bbC[u]}\bbC_0"] & \Mod^r(R^Y_V\shD_X)\arrow[l, "\omega_{\tilde X_Y/\bbC}^{-1}\otimes\bullet"]\arrow[d, "\bullet\otimes_{\bbC[u]}\bbC_0"] \\
\Mod^l(\gr^{V^Y}_\bullet\shD_X) \arrow[r, "\omega_{T_YX}\otimes\bullet"] & \Mod^r(\gr^{V^Y}_\bullet\shD_X)\arrow[l, "\omega_{T_YX}^{-1}\otimes\bullet"]
\end{tikzcd}
\ee
where 
$\Mod^l(\gr^{V^Y}_\bullet\shD_X)$ (resp. $\Mod^r(\gr^{V^Y}_\bullet\shD_X)$) are the abelian category of graded left (resp. right) $\gr^V_\bullet\shD_X$-modules.

Furthermore, since $R_V\shD_X\otimes_{\bbC[u]}\bbC_1 \simeq \shD_X$ where $\bbC_1$ is the residue field of $1\in \Spec\bbC[u]$, we also have the following commutative diagram 
\be\label{eq:sideccmd2}
\begin{tikzcd}
\Mod^l(R_V\shD_X) \arrow[r, "\omega_{\tilde X_Y}\otimes\bullet"]\arrow[d, "\bullet\otimes_{\bbC[u]}\bbC_1"] & \Mod^r(R_V\shD_X)\arrow[l, "\omega_{\tilde X_Y}^{-1}\otimes\bullet"]\arrow[d, "\bullet\otimes_{\bbC[u]}\bbC_1"] \\
\Mod^l(\shD_X) \arrow[r, "\omega_{X}\otimes\bullet"] & \Mod^r(\shD_X).\arrow[l, "\omega_{X}^{-1}\otimes\bullet"]
\end{tikzcd}
\ee

\subsection{Characteristic varieties of nearby cycles.}
In this subsection, we calculate the characteristic cycles of nearby cycles.

Suppose that $\cM$ is specializable along a smooth hypersurface $H\subseteq X$. Then $\gr_k^V\cM$ is both a coherent $\shD_{X,H}$-module and a coherent $\shD_H$-module for every $k\in\Z$. 
\begin{lemma}\label{lm:<0lattice}
If $\cM$ is holonomic, then the Kashiwara-Malgrange filtrations satisfy
\[V_k\cM=V_{k}(\cM(*H))\]
for every $k<0$, where $\cM(*H)$ is the algebraic localization of $\cM$ along $H$. In particular, $V_k\cM$ is a $\shD_{X,H}$-lattice of $\cM(*H)$ for every $k<0$.
\end{lemma}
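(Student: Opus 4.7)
The plan is to reduce the identity $V_k\cM = V_k\cM(*H)$ for $k<0$ to two facts: strictness of the Kashiwara-Malgrange filtration under short exact sequences of holonomic $\shD_X$-modules, and the vanishing $V_j N = 0$ for $j\le 0$ whenever $N$ is a holonomic $\shD_X$-module set-theoretically supported on $H$.

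For the vanishing, I would use Kashiwara's equivalence to write such an $N$ locally as $N \simeq \bigoplus_{j\ge 0} \partial_t^j N_0$ with $N_0 = \ker(t\colon N\to N)$ a coherent $\shD_H$-module. The commutation identity $(t\partial_t)\partial_t^j = \partial_t^j(t\partial_t - j)$ together with $tN_0 = 0$ shows that $t\partial_t$ acts on $\partial_t^j N_0$ as multiplication by $-(j+1)$. Hence the filtration
\[U_k N := \bigoplus_{0\le j\le k-1} \partial_t^j N_0\]
is good, $V_0^H\shD_X$-stable, and satisfies $(t\partial_t + k)\cdot \gr^U_k N = 0$, i.e.\ the $b$-function condition with $b(s)=s$. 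By uniqueness of the KM filtration, $V_\bullet N = U_\bullet N$, so $V_k N = 0$ for every $k\le 0$.

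Next, I would exploit the standard local-cohomology four-term sequence
\[0 \to \Gamma_H(\cM) \to \cM \to \cM(*H) \to \sH^1_H(\cM) \to 0,\]
whose terms are all holonomic (because $\cM$ is) and whose two extreme terms are supported on $H$. Strictness of the KM filtration — for $N\subseteq M$ one has $V_k N = N\cap V_kM$, and for a quotient $M\twoheadrightarrow Q$, $V_k Q$ is the image of $V_k M$ — follows from the uniqueness characterization, since the induced filtrations on sub and quotient are good and inherit the $b$-function property from the ambient one. Splitting the above sequence at $\cM/\Gamma_H(\cM)$ and combining with the vanishing yields, for $k<0$,
\[V_k\cM \;=\; V_k\cM/V_k\Gamma_H(\cM) \;=\; V_k(\cM/\Gamma_H(\cM)),\]
while the surjection $V_k\cM(*H)\twoheadrightarrow V_k\sH^1_H(\cM) = 0$ forces $V_k\cM(*H)\subseteq \cM/\Gamma_H(\cM)$, and hence $V_k\cM(*H) = V_k(\cM/\Gamma_H(\cM)) = V_k\cM$.

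The lattice assertion is then immediate: $V_k\cM$ is coherent over $V_0^H\shD_X = \shD_{X,H}$ as a filtrant of a good filtration, and on $U = X\setminus H$ every $V_j^H\shD_X|_U$ equals $\shD_X|_U$ (since $t$ is a unit there), so the KM filtration is trivial on $U$ and $V_k\cM|_U = \cM|_U = \cM(*H)|_U$. The main subtlety will be the strictness of $V$-filtrations, but this is standard once the uniqueness characterization of the KM filtration is in hand.
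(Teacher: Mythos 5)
Your proposal follows essentially the same route as the paper: its proof also rests on the four-term localization sequence $0\to\cH^0_H(\cM)\to\cM\to\cM(*H)\to\cH^1_H(\cM)\to 0$, the vanishing of $V_k$ (for $k<0$) on the two $H$-supported ends via Kashiwara's equivalence, and the exactness of the induced sequence of $V_k$-terms. Your write-up simply makes explicit the eigenvalue computation behind that vanishing and the strictness of the $V$-filtration under sub/quotients, both of which the paper invokes without proof.
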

\begin{proof}
We consider the exact sequence 
\[0\to \cT\rightarrow \cM\rightarrow \cM(*H)\rightarrow \cQ\to 0\]
where $\cT$ is the torsion subsheaf of $\cM$ supported on $\cH$, namely $\cT=\cH^0_H(\cM)$, and $\cQ$ is the quotient module, or $\cQ=\cH^1_H(\cM)$.
Since $\cT$ and $\cQ$ are supported on $H$, by Kashiwara's equivalence (cf. \cite[Theorem 1.6.1]{HTT}) $V_k\cT$ and $V_k\cQ$ are zero for all $k<0$. The exact sequence induces another exact sequence
\[0\to V_k\cT\rightarrow V_k\cM\rightarrow V_k\cM(*H)\rightarrow V_k\cQ\to 0\]
for each $k$. We thus have obtained the required statement.

\end{proof}

The following theorem is equivalent to \cite[Theorem 5.5]{Gil}, where in \emph{loc. cit.} the nearby cycle is alternatively constructed following the algebraic approach of Beilinson and Bernstein. We give it a proof by applying Theorem \ref{thm:CClogC}.
\begin{theorem}\label{thm:ccnearby}
Suppose that $\cM$ is a regular holonomic $\shD_X$-module and that $H\subseteq X$ is a smooth hypersurface. Then 
\[\overline{\CC(\cM|_{U})}|_H\subseteq T^*(X,H)\]
is a Lagrangian cycle in $T^*H\subseteq T^*(X,H)|_H$ with $U=X\setminus H$. Furthermore, the nearby cycle $\Psi_H(\cM)$ has the characteristic cycle
\[\CC(\Psi_H(\cM))=\overline{\CC(\cM|_U)}|_H\subseteq T^*H.\]
\end{theorem}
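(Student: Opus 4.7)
The strategy is to apply Theorem \ref{thm:CClogC} to the $\shD_{X,H}$-lattice $V_{-1}\cM$ of $\cM(*H)$, and then transfer the resulting log-characteristic-cycle identity to a statement in $T^*H$ via restriction to $\{t=0\}$ and the nilpotent action of $t\partial_t$ on the $V$-graded piece.

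By Lemma \ref{lm:<0lattice}, $\bar\cM \coloneqq V_{-1}\cM$ is a $\shD_{X,H}$-lattice of $\cM(*H)$, so Theorem \ref{thm:CClogC} yields
\[
\CC_{\log}(\bar\cM) \;=\; \overline{\CC}^{\log}(\cM|_U) \;\subseteq\; T^*(X,H).
\]
Every irreducible component on the right is the closure in $T^*(X,H)$ of a conormal $T^*_Y U$ for some stratum $Y\subseteq U$, so no component of $\CC_{\log}(\bar\cM)$ projects entirely into $H$. Working locally with $t$ a defining function of $H$, the isomorphism $t\colon\gr^V_k\cM\to\gr^V_{k-1}\cM$ for $k\le-1$ together with the exhaustivity $\bigcap_k V_k\cM=0$ of the KM filtration gives inductively $V_{-2}\cM = t\cdot V_{-1}\cM$, whence
\[
\Psi_H(\cM)\;\simeq\;\gr^V_{-1}\cM\;=\;\bar\cM/t\bar\cM\;=\;\bar\cM\otimes_{\sO_X}\sO_H.
\]

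Next, a log analogue of Proposition \ref{prop:spnotorsioncc} -- whose proof carries over with $\shD_{\cX/\cS}$ replaced by $\shD_{X,H}$, using the $t$-torsion-freeness of $\bar\cM$ and the absence of $H$-supported components of $\CC_{\log}(\bar\cM)$ -- gives
\[
\CC_{\log}(\Psi_H\cM)\;=\;\CC_{\log}(\bar\cM)|_H\;=\;\overline{\CC(\cM|_U)}^{\log}|_H
\]
inside $T^*(X,H)|_H$. To convert this into a CC in $T^*H$: as a $\shD_{X,H}$-module, $\Psi_H\cM$ is annihilated by $t$ and by $b(t\partial_t-1)$ for the $b$-function $b$ of $\cM$ along $H$, whose principal symbols $t$ and $\tau^{\deg b}$ (with $\tau$ the symbol of $t\partial_t$) cut out $\{t=0,\tau=0\}=T^*H\subseteq T^*(X,H)|_H$. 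Since $\grf\shD_H$ is the quotient of $(\grf\shD_{X,H})|_H$ by the ideal $(\tau)$, a good filtration of $\Psi_H\cM$ over $F_\bullet\shD_{X,H}$ descends to a good filtration over $F_\bullet\shD_H$ with the same associated graded, whence $\CC_{\log}(\Psi_H\cM)=\CC(\Psi_H\cM)$ as cycles in $T^*H$. Chaining the equalities yields $\CC(\Psi_H\cM)=\overline{\CC(\cM|_U)}|_H$, and the Lagrangian conclusion follows from Theorem \ref{thm:holnb}(1), which asserts that $\Psi_H(\cM)$ is a regular holonomic $\shD_H$-module.

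The principal obstacle is establishing the log analogue of Proposition \ref{prop:spnotorsioncc}: one needs to verify that the short exact sequence $0\to\bar\cM\xrightarrow{\cdot t}\bar\cM\to\bar\cM/t\bar\cM\to 0$, combined with $t$-torsion-freeness of the lattice and the absence of $H$-supported components of $\CC_{\log}(\bar\cM)$, produces the restriction formula as an equality of cycles (with multiplicities intact) upon passing to $T^*(X,H)|_H$.
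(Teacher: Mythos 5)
Your proposal is correct and follows essentially the same route as the paper's proof: identify $V_{-1}\cM$ as a $\shD_{X,H}$-lattice via Lemma \ref{lm:<0lattice}, apply Theorem \ref{thm:CClogC}, restrict the log characteristic cycle along $H$ via the short exact sequence $0\to V_{-1}\cM\xrightarrow{\cdot t}V_{-1}\cM\to\gr^V_{-1}\cM\to 0$ as in Proposition \ref{prop:spnotorsioncc}, and then identify the result with $\CC(\Psi_H(\cM))$ inside $T^*H=\{\wt\xi_t=0\}\subseteq T^*(X,H)$. The only cosmetic difference is the direction of the good-filtration comparison (you descend a filtration from $F_\bullet\shD_{X,H}$ to $F_\bullet\shD_H$ using nilpotence of the symbol of $t\partial_t$, whereas the paper takes a good filtration over $F_\bullet\shD_H$ and observes it is automatically good over $F_\bullet\shD_{X,H}$), and the step you flag as the ``principal obstacle'' is exactly what the paper also handles by repeating the argument of Proposition \ref{prop:spnotorsioncc}.
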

\begin{proof}
Since characteristic cycles are local, it is enough to assume $H=(t=0)$ for some local regular (or holomorphic) function $t$.
Since $\Psi_H(\cM)=\gr^V_{k}\cM$ for $k<0$, we can focus on the short exact sequence of $V_0\shD_X=\shD_{X,H}$-modules
\[0\to V_{-1}\cM\xrightarrow{\cdot t}V_{-1}\cM\rightarrow\gr^V_{-1}\cM\to 0.\]
By Lemma \ref{lm:<0lattice}, $V_{-1}\cM$ is a $\shD_{X,H}$-lattice of $\cM$. Then, by Theorem \ref{thm:CClogC}, we have 
\[\CC_{\log}(V_{-1}\cM)=\overline{\CC(\cM|_U)}.\]
Similar to the proof of Proposition \ref{prop:spnotorsioncc}, considering the above short exact sequence, we conclude that 
\[\CC_{\log}(\gr^V_{-1}\cM)=\overline{\CC(\cM|_U)}|_H\subseteq T^*(X,H)\]
and 
\[\dim(\overline{\CC(\cM|_U)}|_H)=\dim X-1.\]
Now we pick a good filtration $F_\bullet(\gr^V_{-1}\cM)$ over $F_\bullet\shD_H$. Furthermore, we have a closed embedding 
\[T^*H\hookrightarrow T^*(X,H)\] 
defined by $\wt\xi_t=0$, where $\wt\xi_t$ is the symbol of $t\partial_t$ in $\gr^V_\bullet\shD_{X,H}$. Thus, $F_\bullet(\gr^V_{-1}\cM)$ is also good over $F_\bullet\shD_{X,H}$. Since characteristic cycles are independent of good filtrations, we therefore have 
\[\CC(\Psi_H(\cM))=\CC_{\log}(\gr^V_{-1}\cM)=\overline{\CC(\cM|_U)}|_H\subseteq T^*H.\]
\end{proof}

\subsection{Generalized Kashiwara-Malgrange filtrations}
We discuss refinements of the Kashiwara-Malgrange filtration by using Sabbah's multi-filtrations. 

Suppose that $X$ is a smooth complex variety and $Y\subseteq X$ a smooth subvariety of codimension $r$ such that 
\[Y=\bigcap_{j=1}^r H_j,\]
where
$H_1, H_2,\dots,H_r$ are smooth hypersurfaces intersecting transversally (that is, the divisor $D=\sum_j H_j$ has simple normal crossings). 
We then call $Y$ a \emph{smooth complete intersection} of $H_1,\dots,H_r$. 

We use ${V}^{H_j}_\bullet \shD_X$ to denote the Kashiwara-Malgrange filtration of $\shD_X$ along $H_j$ for $j=1,\dots,r$. For $\bs=(s_1,s_2,\dots, s_r) \in \Z^r$, we set 
\[V_\bs\shD_X=\bigcap_{j=1}^r { }{V}^{H_j}_{s_j} \shD_X.\]
As the index $\bs$ varies in $\Z^r$, we get an increasing $\Z^r$-filtration of $\shD_X$ with respect to the natural partial order on $\Z^r$, denote by $V_\bullet\shD_X$. One can easily check 
$$V_{\mathbf 0}\shD_X=\shD_{X,D},$$ where the latter is the sheaf of rings of log differential operators.
We write the associated Rees ring by 
\[R_V\shD_X:= \bigoplus_{\bs\in\Z^r}V_\bs\shD_X\cdot \prod_{j=1}^ru_j^{s_i},\]
where the product $\prod_{j=1}^ru_j^{s_i}$ is used to help us remember the multi-grading of $R_V\shD_X$.

For a coherent $\shD_X$-module $\cM$, similar to Definition \ref{def:fltV}, we say that a  $\Z^r$-filtration $U_\bullet\cM$ is compatible with $V_\bullet\shD_X$ if
\[V_\bs\shD_X\cdot U_\bk\cM\subseteq U_{\bk+\bs}\cM\]
for all $\bk, \bs\in \Z^r$. Such a filtration $U_\bullet\cM$ is called good over $V_\bullet\shD_X$ if its associated Rees module $R_U\cM$ is coherent over $R_V\shD_X$.

\subsection{Refinement of normal deformation}\label{subsec:refnormde}
We keep the notations as in the previous subsection. Suppose that $Y\subseteq X$ is a smooth complete intersection of $H_1,\dots,H_r$. We denote by $\sI_{H_j}$ the ideal sheaf of $H_j$ for $j=1,2,\dots,r$. 
Define
$$ \wt X \coloneqq \Spec(\bigoplus_{k_1, \cdots, k_r \in \Z} \bigotimes_j {\sI_{H_j}^{-k_j} \otimes u_j^{k_j} } ).$$
Then the natural inclusion $$\bbC[u_1,\dots,u_r]\hookrightarrow \bigoplus_{k_1, \cdots, k_r \in \Z}  \bigotimes_j {\sI_{H_j}^{-k_j} \otimes u_j^{k_j} } $$
gives rise to a smooth family 
\[\varphi\colon \widetilde X\to \bbC^r\]
so that 
\begin{enumerate}
    \item $\varphi^{-1}(u_1,\dots,u_r)\simeq X$ if $(u_1,\dots,u_r)\in(\bbC^\star)^r$;
    \item $\varphi^{-1}(\mathbf 0)= T_YX$, the algebraic normal bundle of $Y\subseteq X$.
\end{enumerate}
The $\Z^r$-grading of $\bigoplus_{k_1, \cdots, k_r \in \Z} \prod_j {\sI_{H_j}^{-k_j} \otimes u_j^{k_j} } $ induces $(\C^\star)^k$-actions on both $\wt X$ and $T_YX$. Since $Y$ is a complete intersection, the obvious thing is that
$$T_Y X = T_{H_1} X \times_X \cdots \times_X T_{H_r} X$$
and hence $T_Y X \to Y$ is a split rank $r$ vector bundle. Moreover, the induced $(\C^\star)^r$-action on $T_YX$ is given by rescaling the fibers. 

Similar to Lemma \ref{lm:Rvrel}, we obtain:
\begin{lemma}\label{lm:Rvrelm}
We have a natural isomorphism 
\[R_V\shD_X\simeq \shD_{\wt X/\bbC^r}.\]
\end{lemma}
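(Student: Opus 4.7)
The plan is to mimic the argument of Lemma~\ref{lm:Rvrel} in a multi-indexed setting, working first in \'etale local coordinates and then gluing. Since both $R_V\shD_X$ and $\shD_{\wt X/\bbC^r}$ are sheaves of associative algebras generated over the structure sheaf of $\wt X$ by their degree-one differential-operator parts, it suffices to identify these generators in suitable local coordinates.

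First I would fix \'etale local coordinates $(x_1,\dots,x_{n-r},t_1,\dots,t_r)$ on an open $U\subseteq X$ so that $H_j\cap U=(t_j=0)$; this is possible by the transversality of the $H_j$'s. In such coordinates $\sO_{\wt X}|_U$ becomes the tensor product over $\sO_U$ of the $r$ individual extended Rees algebras $\bigoplus_{k_j\in\Z}\sI_{H_j}^{-k_j}\otimes u_j^{k_j}$, so $\wt X|_U$ is smooth of dimension $n+r$ with ``new coordinates'' $\tau_j=t_j\otimes u_j^{-1}$ (each of multi-grade $-e_j$) together with $(x_l,u_j)$, and $\varphi$ is realised as projection onto $(u_1,\dots,u_r)$.

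Second I would compute $\shTA_{\wt X/\bbC^r}|_U$ by taking relative differentials over $\bbC[u_1,\dots,u_r]$. The identity $d(t_i^{a_i}\otimes u_i^{-a_i})=a_it_i^{a_i-1}dt_i\otimes u_i^{-a_i}$ yields, after dualising, the multi-graded analogue of Eq.~\eqref{eq:decompdefXY},
\[
\shTA_{\wt X/\bbC^r}|_U=\bigoplus_{\bk\in\Z^r}\bigotimes_j\sI_{H_j}^{-k_j}\left(\bigoplus_{i=1}^{r}\partial_{t_i}\otimes u_i\prod_j u_j^{k_j}\ \oplus\ \bigoplus_{l=1}^{n-r}\partial_{x_l}\otimes\prod_j u_j^{k_j}\right).
\]
The extra factor $u_i$ reflects $\partial_{t_i}\in V_{e_i}\shD_X$, and by the chain rule $u_i\partial_{t_i}=\partial_{\tau_i}$; likewise $\partial_{x_l}\in V_{\mathbf 0}\shD_X$ corresponds to $\partial_{x_l}$ on the $\wt X$ side. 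Since on the Rees side $R_V\shD_X|_U$ is generated as an $\sO_{\wt X}|_U$-algebra by $\partial_{t_i}\otimes u_i$ (in multi-grade $e_i$) and $\partial_{x_l}$ (in multi-grade $\mathbf 0$), matching generators produces an algebra isomorphism $R_V\shD_X|_U\simeq\shD_{\wt X/\bbC^r}|_U$. All the identifications involve intrinsic objects---the canonical Kashiwara--Malgrange filtration along each $H_j$ and the canonical relative tangent sheaf---so the local isomorphisms glue.

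The main obstacle I anticipate is multi-grading bookkeeping: verifying the displayed decomposition and checking that the $\Z^r$-grading on $R_V\shD_X$ matches the grading coming from the $(\C^\star)^r$-action on $\wt X$ that scales the $u_j$. The simple normal crossing assumption is essential here, since transversality of the $H_j$'s is precisely what makes $V_{\bs}\shD_X=\bigcap_jV^{H_j}_{s_j}\shD_X$ split cleanly along the $r$ coordinate directions, letting the one-hypersurface argument of Lemma~\ref{lm:Rvrel} factor through each one. Once this is established, no essentially new ideas are required.
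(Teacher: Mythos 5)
Your proof is correct and follows essentially the same route as the paper: the paper proves the multi-index statement by repeating the argument of Lemma \ref{lm:Rvrel}, i.e.\ choosing \'etale local coordinates in which the $H_j$ are coordinate hyperplanes, computing the multi-graded decomposition of $\Omega^1_{\wt X/\bbC^r}$ and hence of $\shTA_{\wt X/\bbC^r}$ (your displayed formula is the correct analogue of Eq.~\eqref{eq:decompdefXY}), and concluding from the fact that $\sO$ together with the relative tangent sheaf generates $\shD_{\wt X/\bbC^r}$, with gluing automatic by naturality. Your grading bookkeeping ($\partial_{t_i}\otimes u_i$ in degree $e_i$, $\partial_{x_l}$ in degree $\mathbf 0$, $\tau_j=t_j\otimes u_j^{-1}$ in degree $-e_j$) matches the paper's conventions, so no gap.
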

From the above lemma, we immediately conclude that $R_V\shD_X$ is a coherent and noetherian sheaf of rings.

Similar to Proposition \ref{prop:grrelsp}, we have:
\begin{prop}\label{prop:grrelspT}
\[T^*(\wt X/\bbC^r)=\Spec [\gr^F_\bullet R_V\shD_X],\]
where $F_\bullet(R^Y_V\shD_X)$ is the order filtration for relative differential operators induced from the order filtration on $\shD_X$.
\end{prop}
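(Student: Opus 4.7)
The plan is to derive Proposition \ref{prop:grrelspT} from Lemma \ref{lm:Rvrelm} together with the standard symbol map for relative differential operators, mirroring exactly the proof of Proposition \ref{prop:grrelsp} with $\bbC$ replaced by $\bbC^r$ and $\Z$ replaced by $\Z^r$. The crux is the compatibility of two order filtrations under the ring isomorphism $R_V\shD_X \simeq \shD_{\wt X/\bbC^r}$: the one on the left, built by intersecting each graded piece $V_\bs \shD_X$ with $F_k\shD_X$, and the usual order filtration on the right, built from the relative tangent sheaf.

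First I would invoke Lemma \ref{lm:Rvrelm} to get $R_V\shD_X \simeq \shD_{\wt X/\bbC^r}$ as sheaves of multi-graded algebras. The $\Z^r$-grading on the left corresponds on the right to the $(\bbC^\star)^r$-action on $\wt X$ coming from the multi-graded Rees construction. Then I would verify the filtration compatibility: by definition the order filtration on the Rees side is
\[
F_k R_V\shD_X = \bigoplus_{\bs \in \Z^r}\bigl(F_k\shD_X \cap V_\bs\shD_X\bigr)\prod_{j=1}^r u_j^{s_j},
\]
and since both filtrations are generated by $\sO$-sections together with their first-order pieces, it suffices to check agreement at order one. In \'etale local coordinates $(x_1,\dots,x_{n-r},t_1,\dots,t_r)$ with $H_j=(t_j=0)$, the multi-index analogue of the decomposition in the proof of Lemma \ref{lm:Rvrel} reads
\[
\shTA_{\wt X/\bbC^r} = \bigoplus_{\bk\in\Z^r}\bigotimes_{j=1}^r\sI_{H_j}^{-k_j}\Bigl(\bigoplus_{i=1}^r \partial_{t_i}\otimes u_i\textstyle\prod_j u_j^{k_j} \;\oplus\; \bigoplus_{\ell=1}^{n-r}\partial_{x_\ell}\otimes \prod_j u_j^{k_j}\Bigr),
\]
and each summand manifestly lies in $(F_1\shD_X \cap V_\bk\shD_X)\prod_j u_j^{k_j}$. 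Conversely, every element of that intersection is an $\sO$-linear combination of such terms, so the first-order pieces match and hence the full order filtrations coincide after taking compositions.

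Finally I would conclude by the symbol map for relative differential operators recalled in \S\ref{subsec:relchcc}: since $\varphi\colon\wt X\to\bbC^r$ is smooth, the $\sim$-functor along the affine projection $T^*(\wt X/\bbC^r)\to\wt X$ identifies $\gr^F_\bullet\shD_{\wt X/\bbC^r}$ with the algebraic structure sheaf of $T^*(\wt X/\bbC^r)$. Combined with the ring and filtration identification above, this yields $\Spec[\gr^F_\bullet R_V\shD_X] \simeq T^*(\wt X/\bbC^r)$.

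The principal step requiring care is the filtration matching: one must ensure that the multi-grading factors $\prod_j u_j^{k_j}$ do not artificially shift the order filtration. Since the $u_j$'s are formal grading markers and contribute nothing to the differential part, this is a direct index-bookkeeping exercise, and no new ingredient beyond Proposition \ref{prop:grrelsp} and Lemma \ref{lm:Rvrelm} is needed.
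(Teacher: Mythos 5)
Your proposal is correct and follows essentially the same route as the paper, which deduces the statement from Lemma \ref{lm:Rvrelm} exactly as Proposition \ref{prop:grrelsp} is deduced from Lemma \ref{lm:Rvrel}; your explicit multi-graded decomposition of $\shTA_{\wt X/\bbC^r}$ is just the spelled-out analogue of Eq.~\eqref{eq:decompdefXY}. The only blemish is a harmless index shift: the $\partial_{t_i}$-summand carrying $u_i\prod_j u_j^{k_j}$ lies in $(F_1\shD_X\cap V_{\bk+\bee_i}\shD_X)\,u^{\bk+\bee_i}$ rather than in degree $\bk$, which a reindexing of the sum fixes immediately.
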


\begin{remark}\label{rmk:gagavfil}
In the case that $X$ is a complex manifold and $Y$ is an analytic smooth complete intersection, one can construct the complex manifold $\wt X$ similar to the topological construction in \cite[\S 4.1]{KSbook} or by using open blowups as in \cite[\S 2.1]{Sab}. Then $\shD_{\wt X/\bbC^r}$ is a faithfully flat ring extension of $R_V\shD_X$ by GAGA, or more precisely
\[\shD_{\wt X/\bbC^r}=\sO_{\wt X}\otimes_{R_V\sO_X}R_V\shD_X,\]
where 
$$R_V\sO_X=\bigoplus_{k_1, \cdots, k_r \in \Z} \bigotimes_j {\sI_{H_j}^{-k_j} \otimes u_j^{k_j}}.$$
As a consequence, all the results in this section can be extended to the analytic case.
\end{remark}



\subsection{Specializability along arbitrary slopes}\label{subsect:spalongL}
Let $L=(l_1,\dots,l_r)$ be a nonzero primitive covector in $(\Z^r_{\ge0})^\vee$. We also use $L$ to denote the ray generated by the primitive vector. We call such $L$ a slope for the smooth complete intersection $Y\subseteq X$ and that $L$ is non-degenerate if each $l_j$ is not zero and degenerate otherwise. We set 
$$Y_L\coloneqq \bigcap_{l_j\not=0} H_j.$$ 
By definition, if $L$ is non-degenerate, then $Y_L=Y$. 

Given a nondegenerate slope $L$, we have a toric embedding 
$$\iota\colon\C \hookrightarrow \C^k, \textup{ by } u \mapsto (u^{l_1}, \cdots, u^{l_k}).$$ 
We can pull-back $\wt X$, to get a smooth family $\varphi_L$ in the following Cartesian diagram:
\[
\begin{tikzcd}
\wt X^L \arrow[r,hook,"\iota_L"]\arrow[d,"\varphi^L"]& \wt X\arrow[d,"\varphi"] \\
\bbC\arrow[r,hook] &\bbC^r
\end{tikzcd}
\]
This can be constructed directly, as 
$$ \wt X^L =  \Spec(\bigoplus_{k_1, \cdots, k_r \in \Z}  \bigotimes_j {\sI_{H_j}^{-k_j} \otimes (u^{l_j})^{k_j} } )$$
and the fiber over $u=0$ is
$$ (\varphi^L)^{-1}(0)=\wt X_L|_{u=0} =  \Spec[ (\bigoplus_{k_1, \cdots, k_r \in \Z}  \bigotimes_j {\sI_{H_j}^{-i_j} \otimes (u^{a_j})^{i_j} } ) \otimes_{\C[u]} \C[u]/(u) ]\simeq T_{Y}X.$$
In other words, $\wt X^L$ gives a normal deformation along the slope direction $L$. The isomorphism $\wt X^L|_{u=0}\simeq  T_{Y}X$ induces a $\C^\star$-action on $T_{Y}X$ :
\be\label{eq:Lindaction}
\lambda\cdot (y_1,\dots, y_{n-r},\xi_1,\dots,\xi_{r})=(y_1,\dots, y_{n-r},\lambda^{l_1}\cdot\xi_1,\dots,\lambda^{l_{r}}\cdot\xi_{r})
\ee
for $\lambda\in\C^\star=\C\setminus\{0\} $ and $(y_1,\dots, y_{n-r},\xi_1,\dots,\xi_{r})\in T_{Y}X$.

The construction of $\wt X^L$ induces the following Cartesian diagram of relative cotangent bundles:
\be
\begin{tikzcd}
T^*(\wt X^L/\C)\arrow[r,hook]\arrow[d,"T^*\varphi^L"]& T^*(\wt X/\bbC^r)\arrow[d,"T^*\varphi"]\\
\bbC\arrow[r,hook,"\iota"]&\bbC^r.
\end{tikzcd}
\ee 

By Lemma \ref{lm:Rvrelm}, we see that $R_V\shD_X$ is flat over $\bbC[u_1,u_2,\dots,u_r]$. We then set:
\[^LR_V\shD_X:= \iota_L^*(R_V\shD_X)=\shD_{\wt X^L/\bbC}.\]
In particular, $^LR_V\shD_X$ is coherent and noetherian. If $L$ is degenerate, then one can replace $Y$ by $Y_L$ to reduce to the non-degenerate case.

\begin{remark}\label{rmk:gndlw}
Similar to $\omega_{\wt X_{Y}/\C}$ in \S\ref{subsect:scrm}, one can get the explicit formula for the relative canonical sheaves:
$$\omega_{\wt X/\C^r}=\bigoplus_{k_1, \cdots, k_r \in \Z} \omega_X\otimes_\sO(\bigotimes_j {\sI_{H_j}^{-k_j} \otimes u_j^{k_j-1} } )\textup{ and }\omega_{\wt X^L_{Y}/\C}=\iota_L^*(\omega_{\wt X/\C^r}).$$
\end{remark}


By construction, we have the explicit formula for $^LR_V\shD_X$: 
\[^LR_V\shD_X= \bigoplus_{k\in\Z} { }^LV_k\shD_X\cdot u^{k}\]
where by definition
\[{ }^LV_k\shD_X\coloneqq \sum_{\bs\in \Z^r, L\cdot\bs=k} V_\bs\shD_X.\]
The graded ring $^LR_V\shD_X$ then induces an increasing $\Z$-filtration $^LV_\bullet\shD_X$ on $\shD_X$. We might call $^LV_\bullet\shD_X$ the Kashiwara-Malgrange filtration of $\shD_X$ along the slope $L$. Since $\varphi^L$ is smooth and $\wt X^L|_{u=0}\simeq T_{Y_L}X$, we have that 
\be\label{eq:grLVD}
\gr^{^LV}_\bullet\shD_X\simeq \pi_* \shD_{T_{Y_L}\shD_X}
\ee
where $\pi: T_{Y_L}X\to Y_L\hookrightarrow X$ is the composition (which is an affine morphism). The $\Z$-grading of  $\gr^{^LV}_\bullet\shD_X$ corresponds to the $\bbC^\star$-action in \eqref{eq:Lindaction}.

The $\bbC^\star$-action in \eqref{eq:Lindaction} induces a radial vector field on $T_{Y_L}X$, denoted by $v_L$. We assume that locally $H_j$ are defined by $t_j=0$ where $t_j$ are among some local coordinate system $(x_1,\dots,x_{n-r},t_1,\dots,t_r)$. 
Then locally 
\[v_L=L\cdot (t_1\partial_{t_1},t_2\partial_{t_2},\dots,t_r\partial_{t_r}).\]

\begin{definition}\label{def:fltLV}
Suppose that $Y\subseteq X$ is a smooth complete intersection of $H_1,\dots,H_r$, $\cM$ is a $($left$)$ $\shD_X$-module and $L$ is a slope. A $\Z$-indexed increasing filtration $\Omega_\bullet\cM$ is \emph{compatible} with $^LV_\bullet\shD_X$ if
\[^LV_k\shD_X\cdot \Omega_j\cM\subseteq \Omega_{k+j}\cM \textup{ for all } k,j\in \Z.\]
A compatible filtration $\Omega_\bullet\cM$ is a \emph{good} filtration over $^LV_\bullet\shD_X$ if the associated Rees module 
\[^LR_\Omega\cM\coloneqq \bigoplus_{k\in \Z}\Omega_k\cM\cdot u^k\subseteq \cM[u,1/u]\]
is coherent over $^LR_V\shD_X$. A good filtration $^LV_\bullet\cM$ is called the Kashiwara-Malgrange filtration on $\cM$ along the slope $L$ if there exists a monic polynomial $b(s)\in\bbC[s]$ with its roots having real parts in $[0,1)$ so that 
\[b(v_L+k)\textup{ annihilates } \gr^{^LV}_k\cM\coloneqq {}^LV_k\cM/{}^LV_{k-1}\cM \textup{ for each } k\in\Z. \]
The monic polynomial $b(s)$ of the least degree is called the Bernstein-Sato polynomial or $b$-function of $\cM$ along $L$.
\end{definition}
Kashiwara-Malgrange filtrations along $L$ are unique if exist. For an arbitrary slope $L$, a coherent $\shD_X$-module $\cM$ is called $L$-\emph{specializable} if the Kashiwara-Malgrange filtration of $\cM$ along $L$ exists.  When $L=(1,1,\dots,1)$, the Definition \ref{def:fltLV} coincides with Definition \ref{def:fltV}. When $L=\bee_j$, the $j$-th unit vector in $\Z^r_{\ge0}$, it coincides with Definition \ref{def:fltV} for the case when $Y=H_j$.
In particular, specializability along $L$ is compatible with the specializability defined in \S\ref{subsec:KMVsp}. We can then similarly define $R$-specializability along $L$ for $R$ a subring of $\bbC$.

Suppose that $U_\bullet\cM$ is a good filtration of $\cM$ over $V_\bullet\shD_X$. Then for a slope $L$ we can obtain a compatible filtration $^LU_\bullet\cM$ over $^LV_\bullet\shD_X$ defined by 
\be\label{eq:filsl}
^LU_k\cM\coloneqq \sum_{\bs\in \Z^r, L\cdot\bs=k} U_\bs\cM.
\ee
We denote the associated Rees module by 
\[
^LR_U\cM\coloneqq\bigoplus_{k\in\Z}^LU_k\cM\cdot u^k.
\]
Since $^LR_U\cM\simeq \iota_L^*(R_U\cM)/\cT_u$ and the pullback functor for relative $\shD$-modules preserves coherence, we have that $^LU_\bullet\cM$ is good over $^LV_\bullet\shD_X$, where $\cT_u$ is the $u$-torsion subsheaf.

The following result is a natural generalization of Theorem \ref{thm:sphol}, which is first observed by Sabbah \cite[\S3.1]{Sab}. We provide an alternative proof with the idea essentially due to Bj\" ork. 
\begin{theorem}\label{thm:Lsphol}
Suppose that $\cM$ is a holonomic $\shD_X$-module and $L$ is a slope. Then $\cM$ is splecializable along $L$. Moreover, if $\Omega_\bullet\cM$ is a good filtration over ${}^L V_\bullet\shD_X$, then $\gr^{\Omega}_\bullet\cM$ is holonomic over $\gr^{^LV}_\bullet\shD_X$.
\end{theorem}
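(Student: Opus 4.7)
The plan is to proceed in two stages: first establish the ``moreover'' clause, holonomicity of $\gr^\Omega_\bullet\cM$ for an arbitrary good filtration $\Omega$, and then bootstrap from this to produce the Kashiwara-Malgrange filtration with the required root constraint. Both claims are local on $X$, so I work in a coordinate chart adapted to $L$ and $Y=\bigcap_j H_j$.

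\textbf{Stage 1 (Holonomicity of $\gr^\Omega_\bullet\cM$).} Start from an arbitrary good filtration $\Omega_\bullet\cM$ over $^LV_\bullet\shD_X$. The existence of such $\Omega$ is immediate from $\shD_X$-coherence of $\cM$: take $\Omega_k\cM={}^LV_k\shD_X\cdot\cM_0$ for a coherent $\sO_X$-generator $\cM_0\subset\cM$. Form the associated Rees module $^LR_\Omega\cM\subset\cM[u,u^{-1}]$, which is coherent over $^LR_V\shD_X\simeq\shD_{\wt X^L/\C}$ by Lemma~\ref{lm:Rvrelm} and visibly $u$-torsion free, hence $\C[u]$-flat. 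Equip $^LR_\Omega\cM$ with its induced order filtration $F_\bullet$ and examine the relative characteristic variety $\Ch_\rel(^LR_\Omega\cM)\subset T^*(\wt X^L/\C)$. Over $u\ne 0$, the fiber of $^LR_\Omega\cM$ is $\cM$ and the corresponding fiber of $\Ch_\rel$ is $\Ch(\cM)$, of dimension $n=\dim X$. A Hilbert polynomial comparison using the $\C[u]$-flatness (the classical Bj\"ork Rees trick) propagates this dimension count to the special fiber $u=0$, which is $\Ch(\gr^\Omega_\bullet\cM)\subset T^*T_{Y_L}X$ under the identification~\eqref{eq:grLVD}. Combined with the classical Bernstein inequality on $\shD_{T_{Y_L}X}$, this forces $\dim\Ch(\gr^\Omega_\bullet\cM)=n$, so $\gr^\Omega_\bullet\cM$ is holonomic over $\gr^{^LV}_\bullet\shD_X$.

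\textbf{Stage 2 (Construction of the Kashiwara-Malgrange filtration).} Holonomicity of $\gr^\Omega_\bullet\cM$ implies finite length as a $\shD_{T_{Y_L}X}$-module (after shrinking $X$) and finite graded generation by some $m_1,\dots,m_p$ of degrees $k_1,\dots,k_p$. The radial section $v_L\in\gr^{^LV}_0\shD_X$ preserves each graded piece $\gr^\Omega_k\cM$, so combining the degree-preserving action on each cyclic graded-holonomic summand $\gr^{^LV}_\bullet\shD_X\cdot m_i$ I extract a monic $\tilde b(s)\in\C[s]$ with $\tilde b(v_L+k)\cdot\gr^\Omega_k\cM=0$ for every $k\in\Z$. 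The generalized $(v_L+k)$-eigenspace decomposition along the roots of $\tilde b$ then lets me shift the filtration $\Omega_\bullet$ independently on each eigenspace summand by integer amounts, in order to move every surviving generalized eigenvalue into the strip $\{\Re(s)\in[0,1)\}$. The shifted filtration $^LV_\bullet\cM$ remains good (being an integral shift on finitely many summands) and its $b$-function has the required root constraint; uniqueness is automatic from that constraint by the standard comparison argument.

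\textbf{Main obstacle.} The crux is the upper bound $\dim\Ch(\gr^\Omega_\bullet\cM)\le n$ in Stage~1; the lower bound $\ge n$ is immediate from Theorem~\ref{thm:rbernsteinM} applied to $^LR_\Omega\cM$. This upper bound rests on the $\C[u]$-flatness of $^LR_\Omega\cM$ and a careful Hilbert polynomial comparison between the generic and special fibers of the Rees construction---the content of Bj\"ork's argument in the single-hypersurface case. The remaining work for a general slope $L$ is to verify that the nontrivial weights $l_j$ of the $\C^\star$-action on $T_{Y_L}X$ in~\eqref{eq:Lindaction} do not disrupt that Hilbert polynomial computation, which they do not once one tracks the multi-grading; with this in hand, producing $\tilde b$ in Stage~2 and shifting it into the Kashiwara-Malgrange form are routine extensions of the classical single-hypersurface case.
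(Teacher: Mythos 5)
Your Stage 2 is essentially the paper's argument (minimal polynomial of the degree-wise operator $v_L+k$ on the holonomic graded module, then Kashiwara's root-shifting procedure and gluing by uniqueness), so the question is whether your Stage 1 actually proves that $\gr^\Omega_\bullet\cM$ is holonomic. There is a genuine gap there. You identify $\Ch(\gr^\Omega_\bullet\cM)$ with the fiber of $\Ch_\rel({}^LR_\Omega\cM)$ over $u=0$ and claim that $\C[u]$-flatness of the Rees module plus a ``Hilbert polynomial comparison'' bounds that fiber's dimension by $n$. But $u$-torsion-freeness of ${}^LR_\Omega\cM$ does not control the $u$-torsion of $\gr^F({}^LR_\Omega\cM)$ for a good order filtration $F$, so it does not exclude components of $\Ch_\rel({}^LR_\Omega\cM)$ lying entirely inside $\{u=0\}$; equivalently, the fiber of the relative characteristic variety over $u=0$ can strictly contain $\Ch(\gr^\Omega_\bullet\cM)$ and can have dimension larger than the generic fiber. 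This is exactly why Proposition \ref{prop:spnotorsioncc} lists ``no components of $\CC_\rel$ over the special fiber'' as a hypothesis separate from torsion-freeness, and why elsewhere in the paper the exclusion of such vertical components (Lemma \ref{lm:BMM}, Theorem \ref{thm:micccl}) requires \emph{regular} holonomicity of the ambient module --- whereas Theorem \ref{thm:Lsphol} is asserted for arbitrary holonomic $\cM$, including irregular ones for which the characteristic cycle formula of Theorem \ref{thm:relccL} genuinely fails. So ``flatness over $\C[u]$ $+$ generic fibers holonomic $\Rightarrow$ special fiber holonomic'' is not a usable general principle for coherent relative $\shD$-modules over a curve, and your ``main obstacle'' paragraph mislocates the difficulty: it is not the weights $l_j$ of the $\C^\star$-action, but the comparison between $\cM$ and $\gr^\Omega_\bullet\cM$ itself, already nontrivial for $r=1$, $L=(1)$.

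What the paper uses instead is purely homological: by Bj\"ork's theorem on good filtrations over Zariskian (Auslander-regular) filtered rings, the grade number is preserved, $j_{\gr^{^LV}_\bullet\shD_X}(\gr^\Omega_\bullet\cM)=j_{\shD_X}(\cM)=n$, and since $\gr^{^LV}_\bullet\shD_X\simeq\pi_*\shD_{T_{Y_L}X}$ is a ring of differential operators on an $n$-dimensional variety, grade $n$ is equivalent to holonomicity. That input (Auslander regularity of the ${}^LV$-filtration and the grade characterization of dimension) is the real engine, and it is not recovered by flatness of the Rees module together with semicontinuity of Hilbert data. If you want to salvage your geometric picture you would need a strictness statement for a good $F$-filtration on ${}^LR_\Omega\cM$ relative to $\C[u]$, which is precisely what is not automatic; otherwise, replace Stage 1 by the grade-number argument, after which your Stage 2 goes through as written.
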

\begin{proof}
We take a good filtration $\Omega_\bullet\cM$ over $^LV_\bullet\shD_X$ locally. Such a filtration exists at least locally by coherence. Then we apply \cite[Appendix IV. Theorem 4.10]{Bj} and conclude that $j_{\gr^{^LV}_\bullet\shD_X}(\gr^{\Omega}_\bullet\cM)=j(\cM),$
where the first grade number is the graded number of $\gr^{\Omega}_\bullet\cM$ over $\gr^{^LV}_\bullet\shD_X$. 
Since $\cM$ is holonomic, by \cite[Appendix IV. Proposition 3.5(2)]{Bj} $j(\cM)=n$, the dimension of $X$. Hence $j_{\gr^{^LV}_\bullet\shD_X}(\gr^{\Omega}_\bullet\cM)=n$ and hence $\gr^{\Omega}_\bullet\cM$ is holonomic over $\gr^{^LV}_\bullet\shD_X\simeq\pi_*\shD_{T_{Y_L}X}$ (since $\dim T_{Y_L}X=\dim X=n$).

Now we consider the operator 
\[\theta_L\coloneqq \bigoplus_{k\in\Z}v_L+k\]
on $\gr^{\Omega}_\bullet\cM$. By construction, one can easily check 
\[\theta_L\in \End_{\gr^{^LV}_\bullet\shD_X}(\gr^{\Omega}_\bullet\cM).\]
Since $\gr^{\Omega}_\bullet\cM$ is holonomic over $\gr^{^LV}_\bullet\shD_X$, we conclude that $\theta_L$ admits a minimal polynomial $b(s)\in\bbC[\bs]$. The real parts of roots of $b(s)$ might not be contained in $[0,1)$. Namely, the good filtration $\Omega_\bullet\cM$ is not  the Kashiwara-Malgrange filtration along $L$. We then apply the procedure in the proof of \cite[Theorem 1(1)]{KasV} to adjust the roots of $b(s)$ and the filtration $\Omega_\bullet\cM$. The output of the procedure gives us the Kashiwara-Malgrange filtration $^LV_\bullet\cM$. By uniqueness, the local construction glues to the global $^LV_\bullet\cM$. Therefore, $\cM$ is specializable along $L$. 
\end{proof}

\subsection{Micro nearby cycles along arbitrary slopes}\label{subsec:mncsl}
We keep notations as in the previous subsection and continue to assume $Y\subseteq X$ a smooth complete intersection and $\cM$ a holonomic $\shD_X$-module.

By Theorem \ref{thm:Lsphol}, the Kashiwara-Malgrange filtration $^LV_\bullet\cM$ of $\cM$ along every slope $L$. For a nondegenerate slope $L$, the module $p^*_L(\cM)$ on $\wt X^L\setminus T_YX$ gives rise to a holonomic $\shD_{\wt X^L}$-module, denoted by $\wt \cM_L$ (that is, $\wt \cM_L=j^L_*(\cM_L)$), where $p_L\colon \wt X^L\setminus T_YX\simeq X\times\bbC^\star\to X$ is the natural projection and $j^L\colon \wt X^L\setminus T_YX\hookrightarrow \wt X^L$ is the open embedding. 

\begin{lemma}\label{lm:Lgrnearby}
Suppose that $\cM$ is specializable along a slope $L$. Then
\[\wt\gr^{^LV}_\bullet\cM\simeq \Psi_{u=0}(\wt \cM_L).\]
\end{lemma}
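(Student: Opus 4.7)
The strategy is to realise $^LR_V\cM$ (viewed on $\wt X^L$ through the $\sim$-functor) as a good $V$-lattice of $\wt\cM_L$ along the smooth hypersurface $T_YX = (u=0) \subset \wt X^L$, and then to read off the zeroth $V$-graded piece as the nearby cycle.

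The first step is to produce a natural $\shD_{\wt X^L/\C}$-linear inclusion $^LR_V\cM \hookrightarrow \wt\cM_L$. Since multiplication by $u$ realises the filtration inclusion ${}^LV_k\cM \hookrightarrow {}^LV_{k+1}\cM$ on the $k$-th graded piece of the Rees module, $^LR_V\cM$ is $u$-torsion-free, and its $u$-localisation $^LR_V\cM[u^{-1}]$ is $\cM\otimes\C[u,u^{-1}]$, which on $\wt X^L\setminus T_YX\simeq X\times\C^\star$ is exactly $p_L^*\cM$; the required inclusion then follows from adjunction for the open embedding $j^L$. Next I would extend the $\shD_{\wt X^L/\C}$-action on $^LR_V\cM$ to an action of $V_0\shD_{\wt X^L}$ along $(u=0)$, the essentially new generator being the Euler vector field $u\partial_u$. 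In local coordinates $(y,T_1,\dots,T_r,u)$ on $\wt X^L$ with $t_j = u^{l_j}T_j$ one computes $u\partial_u|_T = u\partial_u|_t + v_L$, so that on the embedded copy $^LR_V\cM\subset\wt\cM_L$ one gets $u\partial_u\cdot(m\cdot u^k) = (v_L+k)(m)\cdot u^k$ for $m\in{}^LV_k\cM$; this preserves $^LR_V\cM$ because $v_L\in V_{\mathbf 0}\shD_X\subseteq{}^LV_0\shD_X$ sends ${}^LV_k\cM$ into itself.

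I then set $W^u_k\wt\cM_L\coloneqq u^{-k}\cdot{}^LR_V\cM$, which is well defined in $\wt\cM_L$ because $u$ is invertible on the open complement where $\wt\cM_L=p_L^*\cM$. The filtration $W^u_\bullet$ is compatible with $V_\bullet\shD_{\wt X^L}$ by the Euler-action computation above, and its goodness reduces to the goodness of $^LV_\bullet\cM$ along $L$. For the $b$-function condition, the polynomial $b(s)$ of Definition \ref{def:fltLV} has roots in $[0,1)$ and satisfies $b(v_L+k)\cdot\gr^{^LV}_k\cM=0$; combining this with the Euler-action formula gives $b(u\partial_u+k)\cdot\gr^{W^u}_k\wt\cM_L=0$ for every $k$. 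By the uniqueness of the Kashiwara-Malgrange $V$-filtration along a smooth hypersurface, $W^u_\bullet=V^u_\bullet\wt\cM_L$, and therefore
\[\Psi_{u=0}(\wt\cM_L)=\gr^{V^u}_0\wt\cM_L={}^LR_V\cM/u\cdot{}^LR_V\cM\simeq\bigoplus_k\gr^{^LV}_k\cM\cdot u^k=\wt\gr^{^LV}_\bullet\cM,\]
the last identification being the $\sim$-functor applied to the Rees presentation.

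The main obstacle is the bookkeeping required to convert the Rees grading of $^LR_V\cM$ into the correct $u\partial_u$-eigen decomposition inside $\wt\cM_L$ and to verify that $W^u_\bullet$ is genuinely good over the full ring $V_\bullet\shD_{\wt X^L}$, not merely over the relative subring $\shD_{\wt X^L/\C}$; once the Euler action is correctly matched with the Rees grading, the $b$-function witness and uniqueness of the $V$-filtration finish the proof.
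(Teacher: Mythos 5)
Your proposal is correct and takes essentially the same route as the paper's (much terser) proof: the paper likewise identifies the Euler field $u\partial_u$ along $(u=0)\subset \wt X^L$ with $\bigoplus_k(v_L+k)$ via the $\C^\star$-action coming from the $u$-grading, so that the Rees module ${}^LR_V\cM$ realizes the Kashiwara-Malgrange filtration of $\wt\cM_L$ along $u=0$ and its reduction mod $u$ is $\wt\gr^{^LV}_\bullet\cM$; your version just makes the uniqueness-of-$V$-filtration argument with the $b$-function witness explicit. The goodness verification you defer is indeed routine: $u\cdot W^u_k=W^u_{k-1}$ for all $k$, $W^u_0={}^LR_V\cM$ is $V_0\shD_{\wt X^L}$-coherent because it is $\shD_{\wt X^L/\C}$-coherent and stable under $u\partial_u$, and the $b$-function bound forces $\partial_u\colon \gr^{W^u}_k\to\gr^{W^u}_{k+1}$ to be surjective for $k\ge 1$, which together give goodness over $V_\bullet\shD_{\wt X^L}$.
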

\begin{proof}
The $\C^\star$-action on $\wt X^L$ is induced by the grading of $u$ by construction. Hence, the $\C^\star$-action induces the action of the Euler vector field along the smooth divisor $T_YX=(u=0)\subseteq \wt X^L$. But the $\C^\star$-actions on $\wt X^L$ and hence $T_{Y_L}X$ are both induced by the operator 
\[\bigoplus_{k\in\Z} (L\cdot (t_1\partial_{t_1},t_2\partial_{t_2},\dots,t_r\partial_{t_r})+k).\]
The required statement then follows by definition. See also \cite[\S1.3]{BMS} for the case $L=(1,1,\dots,1)$. 
\end{proof}

By the above lemma and Theorem \ref{thm:holnb}, we obtain:
\begin{coro}\label{cor:grlhol}
If $\cM$ is regular holonomic, then so is $\gr^{^LV}_\bullet\cM$.  
\end{coro}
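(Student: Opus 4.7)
The plan is to reduce the statement to Theorem \ref{thm:holnb}(1) via Lemma \ref{lm:Lgrnearby}. Lemma \ref{lm:Lgrnearby} gives the isomorphism $\wt\gr^{^LV}_\bullet\cM \simeq \Psi_{u=0}(\wt\cM_L)$, so it suffices to show that $\wt\cM_L$ is a regular holonomic $\shD_{\wt X^L}$-module; then applying Theorem \ref{thm:holnb}(1) along the smooth hypersurface $T_{Y_L}X = (u=0) \subseteq \wt X^L$ immediately yields regular holonomicity of $\wt\gr^{^LV}_\bullet\cM$.

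To prove that $\wt\cM_L$ is regular holonomic, I would proceed in two steps. First, the projection $p_L\colon \wt X^L\setminus T_{Y_L}X \simeq X\times\C^\star \to X$ is a smooth morphism, so $\cM_L = p_L^*\cM$ is regular holonomic on $\wt X^L\setminus T_{Y_L}X$ because inverse image along a smooth morphism preserves regular holonomicity. Second, $j^L\colon \wt X^L\setminus T_{Y_L}X \hookrightarrow \wt X^L$ is an affine open embedding, namely the complement of the smooth hypersurface $T_{Y_L}X$ cut out by the single function $u$. The $\shD$-module direct image $j^L_+\cM_L$ can be computed as the algebraic (meromorphic) localization $\cM_L(*T_{Y_L}X)$, which is again regular holonomic by the standard theory of meromorphic extensions along a smooth divisor. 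This identifies $\wt\cM_L$ as a regular holonomic $\shD_{\wt X^L}$-module.

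Combining the two steps with Lemma \ref{lm:Lgrnearby} and Theorem \ref{thm:holnb}(1) completes the argument. There is no serious obstacle: the corollary is a clean consequence of the identification of graded pieces of the $L$-slope $V$-filtration with nearby cycles on the $L$-normal deformation, together with Kashiwara's theorem that nearby cycles preserve regular holonomicity. The only point requiring mild care is that the definition of $\wt\cM_L = j^L_*\cM_L$ in the excerpt should be interpreted as the $\shD$-module direct image under the open embedding $j^L$, which agrees with the meromorphic localization along the smooth divisor $T_{Y_L}X\subseteq \wt X^L$ since $j^L$ is affine.
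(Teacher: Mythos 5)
Your proof is correct and follows essentially the same route as the paper, which deduces the corollary directly from Lemma \ref{lm:Lgrnearby} together with Theorem \ref{thm:holnb}(1). The only detail you add beyond the paper's one-line deduction is the explicit verification that $\wt\cM_L$ is regular holonomic (smooth pullback plus meromorphic localization along the smooth divisor $u=0$), which is exactly the implicit step the paper relies on, so the two arguments coincide.
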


For a holonomic $\shD_X$-module $\cM$, we denote
\[^LR\Psi_{T_YX}(\cM)\coloneqq \wt\gr^{^LV}_\bullet\cM.\]
Following terminology in \cite{Schpbook} (and motivated by Lemma \ref{lm:Lgrnearby}), we call $^LR\Psi_{T_YX}(\cM)$ the \emph{micro nearby cycle} of $\cM$ along $L$.
\begin{theorem}\label{thm:micccl}
 Assume that $\cM$ is a regular holonomic $\shD_X$-module and $L$ is a nondegenerate slope. Then  
\[\CC_{\tilde X_H/\bbC}({}^LR_V\cM)=\overline{q^{*}_L(\CC(\cM))} \textup{ and }\CC({ }^LR\Psi_{T_YX}(\cM))=\overline{q^{*}_L(\CC(\cM))}|_{T^*T_HX}.\]
where $q_L:T^*(\wt X^L/\bbC)\setminus T^*T_YX\simeq T^*X\times\bbC^\star\to T^*X$ is the natural projection.
\end{theorem}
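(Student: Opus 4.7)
\smallskip

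\noindent\textbf{Proof proposal.} The plan is to realize $^LR_V\cM$ as a coherent $\shD_{\wt X^L/\C}$-submodule of a regular holonomic $\shD_{\wt X^L}$-module and then feed this into Sabbah's conormal formula (Lemma \ref{lm:BMM}); the central-fiber formula will then drop out by base change.

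First I would describe the ambient module. Since $\cM$ is regular holonomic, so is the pullback $\cM_L = p_L^*\cM$ on the open set $\wt X^L\setminus T_YX\simeq X\times \C^\star$, and hence its $j^L_*$-extension $\wt\cM_L$ is a regular holonomic $\shD_{\wt X^L}$-module. By the very construction of $^LV_\bullet\cM$ (Definition \ref{def:fltLV}), the Rees module $^LR_V\cM\subset\cM[u,1/u]$ embeds as a coherent $\shD_{\wt X^L/\C}=\, ^LR_V\shD_X$-submodule of $\wt\cM_L$ that generates it as a $\shD_{\wt X^L}$-module, and it is $u$-torsion free. Over $u\neq 0$, the inclusion $^LR_V\cM\hookrightarrow \wt\cM_L$ is an isomorphism of $\shD$-modules pulled back from $\cM$, so the characteristic components of $\wt\cM_L$ that project dominantly to $\C$ are precisely the closures $\wt Y=\overline{p_L^{-1}(Y)}$ for $Y$ ranging over the strata of $\Ch(\cM)$; all other components of $\Ch(\wt\cM_L)$ are contained in $T_YX=\{u=0\}$.

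Next I would apply Lemma \ref{lm:BMM} with $\cX=\wt X^L$, $\cS=\C$, $\varphi=\varphi^L$, $\cN=\wt\cM_L$, $\cN_\rel={}^LR_V\cM$ and $F=u$. The hypotheses (no $u$-torsion, the non-dominant components lie in $F^{-1}(0)$) hold by the paragraph above, so
\[
\Ch_\rel({}^LR_V\cM)=\bigcup_{Y}T^*_{\varphi^L|_{\wt Y}}(\wt X^L/\C).
\]
Over $\wt X^L\setminus T_YX$ the projection $q_L$ is just projection to $X$ and $\wt Y\cap (\wt X^L\setminus T_YX)=Y\times \C^\star$, whose relative conormal in $T^*(\wt X^L/\C)|_{u\neq 0}$ is $q_L^*(T^*_YX)$. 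Taking closures in $T^*(\wt X^L/\C)$ gives
\[
T^*_{\varphi^L|_{\wt Y}}(\wt X^L/\C)=\overline{q_L^*(T^*_YX)},
\]
proving the variety statement. The multiplicities are determined on the generic locus $u\neq 0$, where $^LR_V\cM$ restricts to $\cM\boxtimes \sO_{\C^\star}$; hence the multiplicity along $\overline{q_L^*(T^*_YX)}$ equals the multiplicity of $T^*_YX$ in $\CC(\cM)$. This yields the first equality of characteristic cycles.

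For the second formula I would invoke Proposition \ref{prop:spnotorsioncc} applied to the smooth divisor $\cH=\{0\}\subset\C$. The hypotheses are satisfied: $^LR_V\cM$ is $u$-torsion free, and by what we just proved $\CC_\rel({}^LR_V\cM)=\overline{q_L^*(\CC(\cM))}$ has no component contained in $u=0$ (each closure dominates the base via $\varphi^L$). Consequently
\[
\CC(i_0^*\,{}^LR_V\cM)=\overline{q_L^*(\CC(\cM))}\big|_{T^*T_YX}.
\]
Finally, since there is no $u$-torsion, $i_0^*\,{}^LR_V\cM=\bL i_0^*\,{}^LR_V\cM\simeq \wt\gr^{^LV}_\bullet\cM={}^LR\Psi_{T_YX}(\cM)$ (the identification used in \S\ref{subsect:scrm} and Lemma \ref{lm:Lgrnearby}), which yields the second equality.

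The step I expect to be the main subtlety is the verification that the hypotheses of Lemma \ref{lm:BMM} genuinely apply to the pair $({}^LR_V\cM, \wt\cM_L)$ — specifically confirming that the ``vertical'' components of $\Ch(\wt\cM_L)$ over $u=0$ are all that can appear outside $L_1$, and controlling the closures $\overline{q_L^*(T^*_YX)}$ inside $T^*(\wt X^L/\C)$ (including the identification with relative conormals), since this is where the geometric content of the slope $L$ enters.
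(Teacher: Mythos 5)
Your proposal is correct and follows essentially the same route as the paper: apply Lemma \ref{lm:BMM} with $\cN_\rel={}^LR_V\cM$, $\cN=\wt\cM_L$, $F=u$ to get the relative characteristic cycle, then apply Proposition \ref{prop:spnotorsioncc} along $u=0$ and identify $i_0^*\,{}^LR_V\cM$ with $\wt\gr^{^LV}_\bullet\cM$. The extra details you supply (verifying the hypotheses of Lemma \ref{lm:BMM}, identifying the relative conormal spaces with $\overline{q_L^*(T^*_YX)}$, and the generic determination of multiplicities) are exactly what the paper leaves implicit.
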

\begin{proof}
We apply Lemma \ref{lm:BMM} and let $
\cN_\rel= { }^LR_V\cM$, $\cN=\wt \cM_L$ and $F=u$. We thus have 
\[\CC_{\tilde X_H/\bbC}({}^LR_V\cM)=\overline{q^{*}_L(\CC(\cM))}.\]
To obtain 
\[\CC({ }^LR\Psi_{T_YX}(\cM))=\overline{q^{*}_L(\CC(\cM))}|_{T^*T_HX},\]
we apply Proposition \ref{prop:spnotorsioncc}.
\end{proof}
Theorem \ref{thm:relccL} follows from combining Theorem \ref{thm:Lsphol}, Corollary \ref{cor:grlhol} and Theorem \ref{thm:micccl}.

\subsection{Sabbah's toric base-change of $R_U\cM$}
We now recall Sabbah's toric base-changes of Rees modules. 

Let us first introduce some notations. We set 
\[M=\Z^r, M^+=(\Z_{\ge0})^r, M_\Q=M\times\Q \textup{ and } M^+_\Q=(\Q_{\ge 0})^r.\]
Let $N$ be the dual lattice of $M$ and define $N^+$, $N_\Q$ and $N^+_\Q$ similarly. 
We then fix a simplicial fan $\Sigma$ in $N_\Q$. We assume that $\Sigma$ is given by a subdivision of $N^+_\Q$. Then $\Sigma$ gives a toric variety $\cS_\Sigma$ (by making further subdivision, one can assume $\cS_\Sigma$ is smooth) and a projective birational morphism 
\[\nu_\Sigma\colon\cS_\Sigma\rightarrow\bbC^r\simeq \Spec \bbC[M^+].\]
For a cone $\Gamma\in \Sigma$, we set 
\[\check{\Gamma}\coloneqq \{m\in M \mid \langle m, n\rangle\ge 0, \forall n\in\Gamma\}.\]
Then 
\[\cS_{\Gamma}\coloneqq \Spec \bbC[\check{\Gamma}]\hookrightarrow \cS_\Sigma\]
gives an open affine patch of $\cS_\Sigma$. We denote a partial ordering induced by $\Gamma$ on $M$ by 
\[\bs\le_{\Gamma}\bs' \Leftrightarrow \bs'-\bs\in \check{\Gamma}\]
and we say 
\[\bs<_{\Gamma}\bs' \Leftrightarrow \bs\le_{\Gamma}\bs' \textup{ but not } \bs'\le_{\Gamma}\bs.\]
We use $\cL(\Gamma)$ to denote the finite set of all the primitive generators of $\Gamma$ and write
\[\cL(\Sigma)\coloneqq \bigcup_{\Gamma\in \Sigma}\cL(\Gamma).\]

We suppose that $Y\subseteq X$ is a smooth complete intersection of $H_1,\dots,H_r$ and $\cM$ a $\shD_X$-module with a $\Z^r$-filtration $U_\bullet\cM$ that is good over $V_\bullet\shD_X$. It is now natural to make $U_\bullet\cM$ and $V_\bullet\shD_X$ indexed by $M\simeq \Z^r$. 
We consider the following fiber-product diagram
\[
\begin{tikzcd}
\wt X_\Sigma \coloneqq \wt X\times_{\bbC^r} \cS_\Sigma \arrow[r,"\mu_\Sigma"]\arrow[d,"\varphi_\Sigma"] &\wt X\arrow[d, "\varphi"]\\
\cS_\Sigma\arrow[r,"\nu_\Sigma"]&\bbC^r.
\end{tikzcd}
\]
Using the flattening theorem relative over toric varieties, Sabbah and Castro proved the following fundamental theorem:
\begin{theorem}\cite[A.1.1]{Sab}\label{thm:existadfan}
Suppose that $U_\bullet\cM$ is a good multi-filtration over $V_\bullet\shD_X$ along the smooth complete intersection $Y\subseteq X$. Then there exists a simplicial fan $\Sigma$ subdividing $\N^+_\Q$ such that $\widetilde{R_U\cM}\coloneqq\mu^*_\Sigma R_U\cM/\mathcal T$ is flat over $\cS_\Sigma$, where $\mathcal T$ is the torsion subsheaf of $\mu^*_\Sigma R_U\cM$ supported over the exceptional locus of $\nu_\Sigma$.
\end{theorem}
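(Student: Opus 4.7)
The plan is to view $R_U\cM$ as a $(\bbC^\star)^r$-equivariant coherent sheaf and then invoke a toric version of the Raynaud--Gruson flattening theorem. By Lemma \ref{lm:Rvrelm}, $R_U\cM$ is coherent over $\shD_{\wt X/\bbC^r}$; since $\varphi\colon \wt X\to\bbC^r$ is smooth, flatness of any base change of $R_U\cM$ over $\cS_\Sigma$ reduces to a statement about a coherent $\sO_{\wt X}$-module flat over the base. The natural $\Z^r$-grading of $R_U\cM$ coming from the good filtration $U_\bullet\cM$ makes it equivariant for the torus $(\bbC^\star)^r=\Spec\bbC[M]$, compatible with the standard toric structure on $\bbC^r=\Spec\bbC[M^+]$.

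By generic flatness, $R_U\cM$ is already flat over the open orbit $(\bbC^\star)^r\subseteq\bbC^r$, so the non-flat locus is a closed torus-invariant subscheme, necessarily a union of coordinate subspaces corresponding to faces of $\N^+_\Q$. Raynaud--Gruson flattening then produces a blowup $\cS\to\bbC^r$ whose strict transform of $R_U\cM$ is flat. Because of the $\Z^r$-equivariance, the flattening ideals obtained from Fitting ideals of local $\Z^r$-graded presentations of $R_U\cM$ are themselves $\Z^r$-graded, hence monomial, and blowing up monomial ideals on $\bbC^r$ yields a toric modification, corresponding to a subdivision of $\N^+_\Q$. A further subdivision (always possible, without destroying the flatness of the strict transform) turns this subdivision simplicial, giving the desired $\Sigma$. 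By construction the strict transform of $\mu_\Sigma^* R_U\cM$ agrees with $\mu_\Sigma^* R_U\cM/\mathcal T$, where $\mathcal T$ is the $\sO_{\cS_\Sigma}$-torsion supported on the exceptional locus of $\nu_\Sigma$.

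The main obstacle is making the equivariant flattening argument precise: one must show that the universal Raynaud--Gruson flattening for an equivariant sheaf is itself equivariant, and identify it with a toric variety. The concrete implementation uses the finite collection of $\Z^r$-graded Fitting ideals (or alternatively, the Hilbert-scheme of quotients of $R_U\cM$) to extract a combinatorial datum --- a finite set of monomial vectors in $M$ --- from which the candidate subdivision of $\N^+_\Q$ is read off. A secondary technical point is that after quotienting by $\mathcal T$ one still obtains a coherent sheaf flat over $\cS_\Sigma$, which follows because the torsion is concentrated over the toric boundary and the strict transform is, by definition of flattening, flat. This reduces the theorem to a standard equivariant resolution statement in toric geometry.
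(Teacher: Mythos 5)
First, note that the paper does not actually prove this statement: Theorem \ref{thm:existadfan} is quoted from \cite[A.1.1]{Sab}, and the text only records that Sabbah and Castro proved it ``using the flattening theorem relative over toric varieties.'' So your proposal is an attempt to reconstruct that external proof, and as it stands it has two genuine gaps.

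The first gap is the reduction in your opening sentence. You assert that, because $\varphi$ is smooth, flatness of $R_U\cM$ (a coherent $\shD_{\wt X/\bbC^r}$-module) over the base ``reduces to a statement about a coherent $\sO_{\wt X}$-module.'' It does not: a coherent $\shD_{\wt X/\bbC^r}$-module is only quasi-coherent and almost never coherent over $\sO_{\wt X}$, and Raynaud--Gruson (or Hironaka's local flattening in the analytic setting) applies to finitely presented $\sO$-modules. To make such a reduction one would have to work with a good $F$-filtration, flatten the $\sO$-coherent module $\wt\gr^F_\bullet(R_U\cM)$ on $T^*(\wt X/\bbC^r)$, and then argue that flatness passes from the associated graded back to $R_U\cM$ \emph{and} is compatible with forming the strict transform $\mu^*_\Sigma R_U\cM/\cT$ (torsion quotients do not commute naively with $\gr^F$); none of this is addressed, and it is exactly the $\shD$-module--theoretic content of Sabbah--Castro's appendix. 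The second gap is the equivariance step, which you yourself flag as ``the main obstacle'' and then leave unresolved. For modules that are not finite over the base, flatness is not detected by Fitting ideals, and the Raynaud--Gruson center is not canonical, so ``graded presentations give graded, hence monomial, flattening ideals'' is not an argument; producing a torus-invariant (monomial) flattening center, i.e.\ a fan subdivision of $N^+_\Q$, is precisely the content of the toric flattening theorem being invoked. In other words, the proposal assumes the hard part rather than proving it. (The peripheral steps are fine: $R_U\cM|_{(\bbC^\star)^r}\simeq\cM\otimes_\bbC\bbC[u_1^{\pm1},\dots,u_r^{\pm1}]$ is indeed flat over the open orbit, torus-invariant closed subsets of $\bbC^r$ are unions of coordinate subspaces, and a further simplicial refinement preserves flatness of the strict transform since pullback of a flat module stays flat and is torsion-free over the new exceptional locus.)
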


Such $\Sigma$ in the above theorem is called a fan  \emph{adapted} to $U_\bullet\cM$. 
By construction, we have a natural inclusion of $\Z^r$-graded modules 
\[R_U\cM\hookrightarrow \cM[u_1,1/u_1,\dots,u_r,1/u_r]\]
or equivalently 
\[R_U\cM|_{\widetilde X_N}=\cM[u_1,1/u_1,\dots,u_r,1/u_r].\]
If we have a cone $\Gamma\in \Sigma$, then we know
\[\mu^*_\Sigma R_U\cM|_{\wt X_\Gamma}=\mu^*_\Gamma R_U\cM= \bbC[\check{\Gamma}]\otimes_{\bbC[M^+]}R_U\cM.\]
Since $\mu_\Sigma$ and $\mu_{\Gamma}$ are identical over $S_N$, using the pullback functor we have an induced natural morphism 
\be \label{eq:pbmorphism}
\bbC[\check{\Gamma}]\otimes_{\bbC[M^+]}R_U\cM\rightarrow \cM[u_1,1/u_1,\dots,u_r,1/u_r].
\ee
The above morphism is neither necessarily surjective nor injective. What is the its image? To answer this question, Sabbah introduced a refined filtration for each cone $\Gamma\in\Sigma$ by
\[^\Gamma U_\bs\cM=\sum_{\bs'\le_{\Gamma}\bs}U_{\bs'}\cM, \quad\forall \bs\in M.\]
If $\Gamma$ is a unimodular cone of dimesion $<k$, then one easily sees that $^\Gamma U_\bs\cM$ only depends on the image of $\bs$ in $M/\Gamma^\perp$. Hence, in the special case that $L$ is a ray in $\Sigma$, we have 
\[^LU_\bs\cM={ }^LU_{L(\bs)}\cM,\]
that is, $^LU_\bullet\cM$ is indexed by $\Z\simeq M/\Gamma^{\perp}$. Therefore, $^LU_\bullet\cM$ coincides with the $\Z$-indexed filtration defined by \eqref{eq:filsl}.

We write the associated Rees module by 
\[^\Gamma R_U\cM\coloneqq \bigoplus_{s\in M}{}^\Gamma U_\bs \cM\cdot\prod_{j=1}^r u_j^{s_j}.\]
One observes that $^\Gamma R_U\cM$ is the image of the natural morphism \eqref{eq:pbmorphism} and that its kernel is the torsion subsheaf $\cT|_{\widetilde X_\Gamma}$. 
Therefore, we have proved that 
\[\widetilde{R_U\cM}|_{\widetilde X_\Gamma}\simeq { }^\Gamma R_U\cM.\]
Furthermore, Sabbah proved:
\begin{lemma}\cite[2.2.2.Lemme]{Sab}\label{lm:flatgamma}
If ${ }^\Gamma R_U\cM$ is flat over $\bbC[\check{\Gamma}]$, then 
\[^\Gamma U_\bs\cM=\bigcap_{L\in \cL(\Gamma)}{}^LU_{L(\bs)}\cM.\]
\end{lemma}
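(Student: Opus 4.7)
The inclusion ${}^\Gamma U_\bs\cM\subseteq\bigcap_{L\in\cL(\Gamma)}{}^L U_{L(\bs)}\cM$ is tautological: if $\bs'\le_\Gamma\bs$ then $\bs-\bs'\in\check\Gamma$, so $L(\bs')\le L(\bs)$ for every primitive generator $L\in\cL(\Gamma)\subseteq\Gamma$, and each summand $U_{\bs'}\cM$ on the left also appears in every factor on the right.

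For the reverse inclusion I would argue by induction on $d=\dim\Gamma$. The base cases $d\le 1$ are formal (for $d=0$ both sides equal $\cM$; for $d=1$ the partial order $\le_\Gamma$ is by definition the $L$-order for the unique primitive generator). For the inductive step fix $d\ge 2$, write $\cL(\Gamma)=\{L_1,\dots,L_d\}$, and let $\Gamma^{(j)}$ denote the facet of $\Gamma$ generated by $\{L_k:k\ne j\}$. The open subscheme $U_j:=\cS_\Gamma\setminus D_{L_j}$ is naturally identified with $\cS_{\Gamma^{(j)}}$, and flat base change along $\C[\check\Gamma]\hookrightarrow\C[\check{\Gamma^{(j)}}]$ sends ${}^\Gamma R_U\cM$ to a flat $\C[\check{\Gamma^{(j)}}]$-module that coincides with ${}^{\Gamma^{(j)}}R_U\cM$ (flatness forbids any extra torsion beyond what is already killed in ${}^\Gamma R_U\cM$). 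The inductive hypothesis applied to each $\Gamma^{(j)}$ then yields
\[{}^{\Gamma^{(j)}}U_\bs\cM=\bigcap_{k\ne j}{}^{L_k}U_{L_k(\bs)}\cM\quad\text{for every }\bs\in M.\]

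The core remaining step is the \v{C}ech identity ${}^\Gamma U_\bs\cM=\bigcap_{j=1}^d{}^{\Gamma^{(j)}}U_\bs\cM$. Since $\Gamma$ is simplicial, $\cS_\Gamma$ is a normal Cohen--Macaulay toric variety, and the common intersection $\bigcap_{j=1}^d D_{L_j}$ of the $d$ torus-invariant divisors is a torus orbit of codimension $d\ge 2$. Flatness of ${}^\Gamma R_U\cM$ over $\C[\check\Gamma]$ forces the unaugmented \v{C}ech complex of ${}^\Gamma R_U\cM$ for the cover $\{U_j\}$ to be exact in degrees $0$ and $1$: after a finite toric base change unimodularizing $\Gamma$, flatness becomes the assertion that a length-$d$ regular sequence $(u^{e_1},\dots,u^{e_d})$ dual to $(L_1,\dots,L_d)$ acts regularly on ${}^\Gamma R_U\cM$, whence its Koszul/\v{C}ech complex is exact. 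The degree-$\bs$ part of the resulting identity ${}^\Gamma R_U\cM=\bigcap_j {}^\Gamma R_U\cM[(u^{e_j})^{-1}]$ is precisely ${}^\Gamma U_\bs\cM=\bigcap_j {}^{\Gamma^{(j)}}U_\bs\cM$. Combined with the inductive identity,
\[{}^\Gamma U_\bs\cM=\bigcap_{j=1}^d\bigcap_{k\ne j}{}^{L_k}U_{L_k(\bs)}\cM=\bigcap_{k=1}^d{}^{L_k}U_{L_k(\bs)}\cM,\]
since for $d\ge 2$ each index $k$ lies in some $\{k':k'\ne j\}$.

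The main obstacle is making the depth-two/\v{C}ech-exactness step rigorous: ${}^\Gamma R_U\cM$ is not finitely generated as a $\C[\check\Gamma]$-module, so the standard $S_2$-extension statement must be carried out either in each $M$-graded piece (using coherence of ${}^\Gamma U_\bs\cM$ over $\sO_X$) or via the explicit Koszul complex after the unimodularizing base change; one also has to verify cleanly the identification ${}^\Gamma R_U\cM\vert_{U_j}\simeq{}^{\Gamma^{(j)}}R_U\cM$, which is exactly where the flatness hypothesis is used a second time.
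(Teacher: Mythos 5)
The paper itself offers no proof of this lemma --- it is quoted verbatim from \cite[2.2.2.Lemme]{Sab} --- so your argument can only be measured on its own terms, and on those terms its structure is sound: the tautological inclusion, the identification $\cS_\Gamma\setminus D_{L_j}\simeq \cS_{\Gamma^{(j)}}$ for a simplicial cone, the induction on facets, and the reduction to the extension identity ${}^\Gamma R_U\cM=\bigcap_j({}^\Gamma R_U\cM)[u^{-m_j}]$ all work. Three spots should be tightened, and all are fixable. First, the identification $({}^\Gamma R_U\cM)[u^{-m_j}]\simeq{}^{\Gamma^{(j)}}R_U\cM$ needs no flatness: choosing $m_j\in\check\Gamma\cap M$ with $L_k(m_j)=0$ for $k\ne j$ and $L_j(m_j)>0$, one has $\check{\Gamma^{(j)}}=\bigcup_k(\check\Gamma-km_j)$, so in degree $\bs$ the localization is $\bigcup_k{}^\Gamma U_{\bs+km_j}\cM={}^{\Gamma^{(j)}}U_\bs\cM$ inside $\cM[u_1^{\pm1},\dots,u_r^{\pm1}]$; flatness is only needed to pass to the facet (and there it is automatic, localization of flat being flat). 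Second, the ``unimodularizing finite base change'' is unnecessary and risky (finite toric covers are not flat, so flatness does not obviously transfer); instead observe directly that $(u^{am_1},u^{bm_2})$ is a regular sequence in $\C[\check\Gamma]$ by a monomial computation (or by Hochster's Cohen--Macaulayness), and flatness of $N={}^\Gamma R_U\cM$ makes it a regular sequence on $N$ --- finite generation of $N$ over $\C[\check\Gamma]$ is never needed, which disposes of the obstacle you flag. Third, you need much less than exactness of the whole Koszul/\v{C}ech complex: if $x\in\bigcap_j N[u^{-m_j}]$, pick $a,b$ with $y=u^{am_1}x\in N$ and $z=u^{bm_2}x\in N$; then $u^{bm_2}y=u^{am_1}z$, regularity of the pair on $N$ gives $y\in u^{am_1}N$, and torsion-freeness gives $x\in N$. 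With these repairs your induction closes exactly as written.
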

It is obvious that $\bbC[\check{\Gamma}]\otimes_{\bbC[M^+]}R_U\cM$ is coherent over $^\Gamma R_V\shD_X$ and hence $^\Gamma R_U\cM$ is coherent over $^\Gamma R_V\shD_X$. However, it is not always the case that $^\Gamma R_U\cM$ is coherent over $R_V\shD_X$ and hence $^\Gamma U_\bullet\cM$ is not necessarily a good filtration over $V_\bullet\shD_X$. To fix this, Sabbah defined the \emph{saturation} of $U_\bullet\cM$ by 
\[\bar U_\bs \cM\coloneqq \bigcap_{\textup{primitive vectors } L\in N^+} {}^LU_{L(\bs)}\cM.\]
\begin{theorem}[Sabbah]\label{thm:goodsat}
Suppose that $U_\bullet\cM$ is a good multi-filtration over $V_\bullet\shD_X$ and $\Sigma$ is a simplicial fan adapted to $U_\bullet\cM$. Then $\bar U_\bullet\cM$ is good over $V_\bullet\shD_X$ and 
\[\bar U_\bs \cM=\bigcap_{L\in \cL(\Sigma)}{}^LU_{L(\bs)}\cM.\]
\end{theorem}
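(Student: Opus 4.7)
The plan is to reduce the intersection defining $\bar U_\bs\cM$ from all primitive vectors in $N^+$ to the finite set $\cL(\Sigma)$, and then realize $R_{\bar U}\cM$ as the proper pushforward $(\mu_\Sigma)_*\widetilde{R_U\cM}$, so that coherence over $R_V\shD_X$ follows from Proposition \ref{pro:pfcohrelbasechange}.

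For the equality $\bar U_\bs\cM = \bigcap_{L\in\cL(\Sigma)} {}^L U_{L(\bs)}\cM$, the inclusion ``$\subseteq$'' is immediate since $\cL(\Sigma)$ consists of primitive vectors in $N^+$. Conversely, let $L'\in N^+$ be any primitive vector; since $\Sigma$ subdivides $N^+_\Q$, $L'$ lies in some cone $\Gamma\in\Sigma$, and Lemma \ref{lm:flatgamma} (applicable because $\Sigma$ is adapted) gives ${}^\Gamma U_\bs\cM=\bigcap_{L\in\cL(\Gamma)}{}^L U_{L(\bs)}\cM$. Moreover, any $\bs'\le_{\Gamma}\bs$ satisfies $\bs-\bs'\in\check\Gamma$, hence $L'(\bs-\bs')\ge 0$, so $U_{\bs'}\cM\subseteq {}^{L'}U_{L'(\bs)}\cM$; summing over such $\bs'$ yields ${}^\Gamma U_\bs\cM\subseteq {}^{L'}U_{L'(\bs)}\cM$. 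Chaining the two inclusions gives $\bigcap_{L\in\cL(\Sigma)}{}^L U_{L(\bs)}\cM\subseteq {}^{L'}U_{L'(\bs)}\cM$; intersecting over all such $L'$ yields the reverse inclusion and simultaneously identifies $\bar U_\bs\cM$ with $\bigcap_{\Gamma\in\Sigma}{}^\Gamma U_\bs\cM$.

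For goodness, I would identify $R_{\bar U}\cM$ with $(\mu_\Sigma)_*\widetilde{R_U\cM}$. Each restriction $\widetilde{R_U\cM}|_{\wt X_\Gamma}={}^\Gamma R_U\cM$ is torsion-free by Theorem \ref{thm:existadfan}, so all these sheaves embed into $\cM[u_1,1/u_1,\dots,u_r,1/u_r]=\widetilde{R_U\cM}|_{\wt X_N}$ on the dense torus orbit $\wt X_N\subseteq\wt X_\Sigma$; the sheaf axiom on the cover $\{\wt X_\Gamma\}$ together with torus-equivariance of $\mu_\Sigma$ then identifies the $\bs$-graded piece of $(\mu_\Sigma)_*\widetilde{R_U\cM}$ with $\bigcap_{\Gamma}{}^\Gamma U_\bs\cM = \bar U_\bs\cM$. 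Thus $(\mu_\Sigma)_*\widetilde{R_U\cM}\simeq R_{\bar U}\cM$ as $\Z^r$-graded $R_V\shD_X$-modules. The order filtration on $R_U\cM$ pulls back to a good filtration on $\widetilde{R_U\cM}$ over $F_\bullet\shD_{\wt X_\Sigma/\cS_\Sigma}$, and since $\nu_\Sigma$ is proper, Proposition \ref{pro:pfcohrelbasechange} yields that $(\mu_\Sigma)_*\widetilde{R_U\cM}$ is coherent over $\shD_{\wt X/\bbC^r}=R_V\shD_X$; equivalently $\bar U_\bullet\cM$ is a good multi-filtration over $V_\bullet\shD_X$.

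The principal obstacle will be the bookkeeping in the identification $(\mu_\Sigma)_*\widetilde{R_U\cM}\simeq R_{\bar U}\cM$: one must verify that the $\Z^r$-grading is preserved and that the sheaf-theoretic intersection really computes the zeroth direct image (rather than a strictly smaller subsheaf). Torus-equivariance of $\mu_\Sigma$ and torsion-freeness of each ${}^\Gamma R_U\cM$ coming from the adaptedness of $\Sigma$ are the structural inputs that make this identification work cleanly.
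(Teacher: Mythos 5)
Your proposal is correct and follows essentially the same route as the paper: the intersection formula is obtained by applying Lemma \ref{lm:flatgamma} on each cone $\Gamma$ (using adaptedness for flatness) together with the monotonicity argument ${}^\Gamma U_\bs\cM\subseteq{}^{L'}U_{L'(\bs)}\cM$ for primitive $L'\in\Gamma\cap N^+$, and goodness is deduced by identifying $R_{\bar U}\cM$ with $\mu_{\Sigma*}(\widetilde{R_U\cM})$ over the cover $\{\wt X_\Gamma\}$ and invoking Proposition \ref{pro:pfcohrelbasechange} for the proper morphism $\nu_\Sigma$. The only difference is cosmetic: you spell out the gluing/torsion-freeness bookkeeping that the paper leaves implicit.
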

\begin{proof}
We take a cone $\Gamma\in \Sigma$ and consider the natural surjection
\[\mu^*_\Gamma R_U\cM\rightarrow \widetilde{R_U\cM}|_{\widetilde X_\Gamma}\simeq { }^\Gamma R_U\cM.\]
By Theorem \ref{thm:existadfan}, $\widetilde{R_U\cM}$ is flat over $\cS_\Sigma$.
By Lemma \ref{lm:flatgamma}, we hence know 
\[\widetilde{R_U\cM}|_{\wt X_\Gamma}=\bigoplus_{s\in M}(\bigcap_{L\in \cL(\Gamma)}{}^LU_{L(\bs)}\cM)\cdot\prod_{j=1}^r u_j^{s_j}.\]
Since $\{\wt X_\Gamma\}_{\Gamma\in \Sigma}$ gives a covering of $\wt X_\Sigma$, we hence obtain that 
\[\mu_{\Sigma*}(\widetilde{R_U\cM})=\bigoplus_{s\in M}(\bigcap_{L\in \cL(\Sigma)}{}^LU_{L(\bs)}\cM)\cdot\prod_{j=1}^r u_j^{s_j}.\]
Since $\nu_\Sigma$ is projective, by Proposition \ref{pro:pfcohrelbasechange} we conclude that $\mu_{\Sigma*}(\widetilde{R_U\cM})$ is coherent over $R_V\shD_X$.

By construction, we know that 
\[^\Gamma U_\bs \cM \subseteq \bigcap_{\textup{primitive vectors }L\in\Gamma\cap N^+}{}^LU_{L(\bs)}\cM\]
where on the right hand side the intersection is over all primitive vectors in $\Gamma\cap N^+$ (not just generators of $\Gamma$)
and hence 
\[^\Gamma U_\bs \cM =\bigcap_{L\in \cL(\Gamma)}{}^LU_{L(\bs)}\cM= \bigcap_{\textup{primitive vectors }L\in\Gamma\cap N^+}{}^LU_{L(\bs)}\cM\]
thanks to Lemma \ref{lm:flatgamma} again. Therefore,
\[\bar U_\bs\cM= \bigcap_{\Gamma\in\Sigma}{}^\Gamma U_\bs \cM=\bigcap_{L\in \cL(\Sigma)}{}^LU_{L(\bs)}\cM.\]
and 
\[\mu_{\Sigma*}(\widetilde{R_U\cM})=R_{\bar U}\cM.\]
Since we have proved that $\mu_{\Sigma*}(\widetilde{R_U\cM})$ is coherent over $R_V\shD_X$, $\bar U_\bullet\cM$ is good over $V_\bullet\shD_X$.
\end{proof}

\begin{remark}\label{rmk:nabmkm}
Let $\cM$ be a holonomic $\shD_X$-module.
By Theorem \ref{thm:Lsphol}, we have $^L V_\bullet\cM$ the Kashiwara-Malgrange filtration of $\cM$ for each slope $L$. We then fix an adapted fan $\Sigma$ to a good multi-filtration $U_\bullet\cM$. Now one can naively define 
\[^\Sigma V_\bs\cM= \bigcap_{L\in \cL(\Sigma)} {}^LV_{L(\bs)}\cM\quad \forall \bs\in M,\]
which gives a $\Z^r$-filtration $^\Sigma V_\bullet \cM$ over $V_\bullet\shD_X$. However, it is not necessarily true in general that $^\Sigma V_\bullet \cM$ is good over $V_\bullet\shD_X$ even if $\cM$ is regular holonomic; see \cite[\S 3.3]{Sab} for further discussions. This means that one cannot define multi-indexed Kashiwara-Malgrange filtrations in general. On the contrary, Bernstein-Sato polynomials can be generalized successfully to the multi-indexed case (see Theorem \ref{thm:mibfs}).
\end{remark}

Using Theorem \ref{thm:goodsat}, Sabbah proved the following beautiful result about the existence of multi-variable $b$-function. We sketch its proof for completeness.
\begin{theorem}[Existence of Sabbah's generalized $b$-functions]\label{thm:mibfs}
Suppose that $\cM$ is a holonomic $\shD_X$-module with a $\Z^r$-filtration $U_\bullet\cM$ good over $V_\bullet\shD_X$ along a smooth complete intersection $Y\subseteq X$ of $H_1,\dots,H_r$. Then there exists a simplicial fan $\Sigma$ subdividing $N^+$ such that for every nonzero vector $\ba\in M^+$ there exist polynomials $b^\ba_L(s)\in\bbC[s]$ $($depending on $\ba$$)$ for all slopes $L\in \cL(\Sigma)$ so that locally
\[\prod_{L\in \cL(\Sigma)}b^\ba_L(L\cdot (t_1\partial_{t_1},\dots,t_r\partial_{t_r})) \textup{ annihilates } \frac{U_{\vec 0}\cM}{U_{-\ba}\cM},\]
where $t_j$ are local defining functions of $H_j$.
\end{theorem}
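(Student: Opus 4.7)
The plan is to reduce the multi-variable statement to a combination of one-variable $b$-functions along each ray of an adapted fan, exploiting commutativity of the operators $v_L = L\cdot(t_1\partial_{t_1},\ldots,t_r\partial_{t_r})$ together with Theorem \ref{thm:goodsat}. First I would invoke Theorem \ref{thm:existadfan} to pick a simplicial fan $\Sigma$ subdividing $N^+_\Q$ adapted to $U_\bullet\cM$; Theorem \ref{thm:goodsat} then gives the explicit saturation $\bar U_\bs\cM = \bigcap_{L\in\cL(\Sigma)} {}^LU_{L(\bs)}\cM$ and shows that it is good over $V_\bullet\shD_X$, with $U_\bs\cM\subseteq \bar U_\bs\cM$ for every $\bs \in M$.

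For each ray $L\in\cL(\Sigma)$, the induced single-indexed filtration $^LU_\bullet\cM$ is good over $^LV_\bullet\shD_X$, so the argument producing the minimal polynomial of $\theta_L$ on $\gr^{^LU}_\bullet\cM$ in the proof of Theorem \ref{thm:Lsphol} yields a monic $b_L(s)\in\bbC[s]$ with $b_L(v_L+k)\cdot {}^LU_k\cM \subseteq {}^LU_{k-1}\cM$ for every $k\in\Z$. Telescoping over $k=0,-1,\dots,-L\cdot\ba+1$, the polynomial
\[
b_L^\ba(s) \coloneqq \prod_{j=0}^{L\cdot\ba-1} b_L(s-j)
\]
satisfies $b_L^\ba(v_L)\cdot {}^LU_0\cM \subseteq {}^LU_{-L\cdot\ba}\cM$. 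Since all $t_i\partial_{t_i}$ commute and lie in $V_{\vec 0}\shD_X$, the operators $b_L^\ba(v_L)$ for distinct $L$ commute with one another and each preserves the multi-filtration $U_\bullet$. Applying the factors one at a time to $U_{\vec 0}\cM \subseteq \bigcap_L {}^LU_0\cM$, and reordering so that a fixed $b_{L_i}^\ba(v_{L_i})$ acts last, one obtains
\[
\prod_{L\in\cL(\Sigma)} b_L^\ba(v_L)\cdot U_{\vec 0}\cM \;\subseteq\; \bigcap_{L\in\cL(\Sigma)} {}^LU_{-L\cdot\ba}\cM \;=\; \bar U_{-\ba}\cM.
\]

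The main obstacle will be closing the remaining gap between $\bar U_{-\ba}\cM$ and $U_{-\ba}\cM$. The coherent graded quotient $R_{\bar U}\cM/R_U\cM$ over $R_V\shD_X$, viewed as a sheaf on $\wt X$, agrees with $\cM[u_1^{\pm 1},\dots,u_r^{\pm 1}]$ in both numerator and denominator on the open torus, hence is supported over the exceptional locus of $\nu_\Sigma$, whose irreducible components correspond to the rays $L\in\cL(\Sigma)$. Along each such divisor one can apply the one-variable $b$-function theorem from \S\ref{subsec:KMVsp} on the flattened space $\wt X_\Sigma$ (where $\widetilde{R_U\cM}$ is flat by Theorem \ref{thm:existadfan}) to produce an extra polynomial factor in $v_L$ that annihilates the $L$-graded piece of this quotient. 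Enlarging each $b_L^\ba$ by these correction factors yields polynomials whose product sends $U_{\vec 0}\cM$ into $U_{-\ba}\cM$ locally, completing the proof.
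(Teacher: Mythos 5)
Your first half is sound and is exactly the paper's mechanism: pick an adapted fan via Theorem \ref{thm:existadfan}, use Theorem \ref{thm:goodsat} for the saturation, get $b_L$ from the minimal polynomial of $\theta_L$ on the holonomic module $\gr^{^LU}_\bullet\cM$, telescope, and commute the factors $b^\ba_L(v_L)$ (which all lie in $V_{\vec 0}\shD_X$ and hence preserve each $^{L'}U_\bullet$) so that the product lands in $\bigcap_L {}^LU_{L(\bs)}\cM=\bar U_{\bs}\cM$. The problem is where you aim: you descend only to $\bar U_{-\ba}\cM$, which in general is \emph{not} contained in $U_{-\ba}\cM$ (the containment goes the other way, $U_\bs\cM\subseteq\bar U_\bs\cM$), and your proposed bridge from $\bar U_{-\ba}\cM$ to $U_{-\ba}\cM$ is a genuine gap. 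The quotient $R_{\bar U}\cM/R_U\cM$ is a graded $R_V\shD_X$-module, i.e.\ a relative $\shD$-module on $\wt X$; it is supported over the coordinate hyperplanes $\{\prod_j u_j=0\}$ of $\bbC^r$ (the exceptional locus of $\nu_\Sigma$ lives on $\cS_\Sigma$, resp.\ $\wt X_\Sigma$, not on $\wt X$), and no version of the one-variable $b$-function theorem of \S\ref{subsec:KMVsp} applies to it: that theorem concerns (specializable, e.g.\ holonomic) $\shD_X$-modules, whereas here you would need a specializability or relative-holonomicity statement for this quotient, which is neither established nor available in the paper's framework (indeed $R_U\cM$ itself is generally not relative holonomic). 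Producing a polynomial in the $v_L$'s annihilating $\bar U_{-\ba}\cM/U_{-\ba}\cM$ is a statement of essentially the same depth as the theorem you are proving, so the sketch does not close the argument.

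The fix is simpler and is the route the paper takes: before choosing the polynomials, use that $\bar U_\bullet\cM$ is \emph{good} over $V_\bullet\shD_X$ (the substantive content of Theorem \ref{thm:goodsat}) together with goodness of $U_\bullet\cM$ to compare the two filtrations locally. Concretely, $R_{\bar U}\cM$ is locally generated by finitely many homogeneous sections of degrees $\bs_1,\dots,\bs_m$, each lying in some $U_{\bt_i}\cM$, whence $\bar U_{\bs}\cM\subseteq U_{\bs+\bc}\cM$ for a suitable $\bc\in M^+$ and all $\bs$; so for the given $\ba$ there is $\bk\in M^+$ with $\bar U_{-\bk}\cM\subseteq U_{-\ba}\cM$ locally. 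Now take your telescoped polynomials but with the longer telescope, $b^\ba_L(s)=\prod_{j=0}^{L(\bk)-1}b_L(s-j)$, so that $b^\ba_L(v_L)\cdot {}^LU_0\cM\subseteq {}^LU_{L(-\bk)}\cM$; your commutation argument then gives
\[
\prod_{L\in\cL(\Sigma)} b^\ba_L(v_L)\cdot U_{\vec 0}\cM\;\subseteq\;\bigcap_{L\in\cL(\Sigma)}{}^LU_{L(-\bk)}\cM\;=\;\bar U_{-\bk}\cM\;\subseteq\;U_{-\ba}\cM,
\]
which is the assertion of the theorem, with no need to control $\bar U_{-\ba}\cM/U_{-\ba}\cM$ at all.
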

\begin{proof}
We take a simplicial fan $\Sigma$ adapted to $U_\bullet\cM$, whose existence is guaranteed by Theorem \ref{thm:existadfan}. By Theorem \ref{thm:goodsat}, the saturation of $U_\bullet\cM$ is 
\[\bar U_\bs\cM=\bigcap_{L\in \cL(\Sigma)} {}^LU_{L(\bs)}\cM \quad \forall \bs\in M.\]
Since $\bar U_\bullet\cM$ is good over $V_\bullet\shD_X$, there exists a vector $\bk\in M^+$ depending on $\ba$ such that 
\[\bar U_{-k}\cM\subseteq U_{-\ba}\cM.\]

On the other hand, similar to the proof of Theorem \ref{thm:Lsphol}, we conclude that $\gr^{^LU}_\bullet\cM$ is holonomic over $\gr^{^LV}_\bullet\shD_X$ for each slope $L$. Therefore, there exists $b_L(s)\in \bbC[s]$ such that $b_L(\theta_L)$ kills $\gr^{^LU}_\bullet\cM$. In particular, there exists $b^\ba_L(s)\in \bbC[s]$ so that 
\[b^\ba_L(L\cdot (t_1\partial_{t_1},\dots,t_r\partial_{t_r}))\cdot{ }^LU_{0}\cM\subseteq {}^LU_{L(-\bk)}\cM\]
for each $L\in \cL(\Sigma)$. Since $U_{\vec 0}\cM\subseteq \bar U_{\vec 0}\cM$ and 
\[\bar U_{-k}\cM=\bigcap_{L\in \cL(\Sigma)} {}^LU_{L(-\bk)}\cM\subseteq U_{-\ba}\cM,\]
the required statement follows. 
\end{proof}

\subsection{Relative characteristic cycles for $R_U\cM$}\label{subsec:relccU}
We now prove Theorem \ref{thm:CCrelRU}.
Assume that $\cM$ be a regular holonomic $\shD_X$-module with a good filtration $U_\bullet\cM$ over $V_\bullet\shD_X$. We write 
$$p\colon \varphi^{-1}((\C^*)^r)\simeq X\times (\C^*)^r\longrightarrow X$$ 
the natural projection, and $j\colon \varphi^{-1}((\C^*)^r)\hookrightarrow \wt X$ the open embedding. We then set 
$\cN=j_*(p^*\cM)$, $\cN_\rel=R_U\cM$  and $F=\prod_j u_j$. Thus, Theorem \ref{thm:CCrelRU} follows from Lemma \ref{lm:BMM}. 

If additionally $R_U\cM$ is flat over $\C^r$, then we pick an arbitrary point $\alpha\in \C^r$ and general hyperplanes $\cH_1,\dots,\cH_r$ such that $\alpha$ is a smooth complete intersection of these hyperplanes. Applying Lemma \ref{lm:BMM} and Proposition \ref{prop:spnotorsioncc} inductively, we conclude that $\CC_\rel(R_U\cM)$ is relative Lagrangian and hence that $R_U\cM$ is relative holonomic. We have thus proved Proposition \ref{prop:flatrelhol}. Now we pick a simplicial fan adapted to $U_\bullet\cM$ as in Theorem \ref{thm:existadfan}. Then $\wt{R_U\cM}$ is flat over $S_\Sigma$. By a similar argument, we can more generally prove:
\begin{prop}\label{prop:usigmarelhol}
In the situation of Theorem \ref{thm:existadfan}, if $\cM$ is a regular holonomic $\shD_X$-module, then $\wt{R_U\cM}$ is relative holonomic over $S_\Sigma$.
\end{prop}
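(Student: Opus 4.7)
The plan is to reduce to the argument already used for Proposition \ref{prop:flatrelhol}, but carried out locally on an affine toric cover of $\cS_\Sigma$. Since relative holonomicity is a local property on the base, I would cover $\cS_\Sigma$ by the affine opens $\cS_\Gamma = \Spec \C[\check{\Gamma}]$ for $\Gamma \in \Sigma$ and prove that the restriction $\wt{R_U\cM}|_{\wt X_\Gamma} \simeq {}^\Gamma R_U\cM$ is relative holonomic over $\cS_\Gamma$. After subdividing $\Sigma$ we may assume $\cS_\Sigma$ is smooth, so that each $\cS_\Gamma$ is isomorphic to an affine space whose coordinate hyperplanes are indexed by $\cL(\Gamma)$.

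The first main step is to extract from Lemma \ref{lm:BMM} a chart-wise version of Theorem \ref{thm:CCrelRU}. Taking $\cN_\rel = {}^\Gamma R_U\cM$, letting $\cN$ be the extension to $\wt X_\Gamma$ of the pullback of $\cM$ from the open locus $\varphi_\Sigma^{-1}((\C^*)^r) \subseteq \wt X_\Gamma$ (where $\mu_\Sigma$ is an isomorphism and the pullback is regular holonomic), and letting $F$ be the product of the defining equations of the toric boundary of $\cS_\Gamma$, I would verify that the flatness supplied by Theorem \ref{thm:existadfan} provides the $F$-torsion-freeness hypothesis. The output is an explicit description of $\Ch_\rel({}^\Gamma R_U\cM)$ as a union of relative conormal spaces to the strata of $\Ch(\cM|_U)$, pulled back through $\mu_\Sigma$; in particular $\Ch_\rel({}^\Gamma R_U\cM)$ has no irreducible component contained in the preimage of any toric boundary divisor of $\cS_\Gamma$.

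Once this no-vertical-component property is in place, the inductive cut-by-hyperplane argument used in the proof of Proposition \ref{prop:flatrelhol} applies verbatim. For any point $\alpha \in \cS_\Gamma$, I would choose $r$ general smooth hypersurfaces of $\cS_\Gamma$ through $\alpha$ and apply Proposition \ref{prop:spnotorsioncc} inductively; flatness of ${}^\Gamma R_U\cM$ over $\cS_\Gamma$ provides the torsion-freeness hypothesis at each step, and the cycle formula above ensures that the no-component-over-the-slice hypothesis is preserved. After $r$ cuts the fiber $\Ch_\rel({}^\Gamma R_U\cM)_\alpha$ is isotropic in $T^*(\wt X_\Gamma)_\alpha$, and the opposite inequality is supplied by the relative Bernstein inequality of Maisonobe (Theorem \ref{thm:rbernsteinM}). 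Since each chart yields relative holonomicity and the charts cover $\cS_\Sigma$, one concludes the proposition.

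The step I expect to be the main obstacle is the first one: justifying the chart-wise application of Lemma \ref{lm:BMM}, because the lemma is stated for a regular holonomic $\shD_\cX$-module on the ambient space, and here one must produce such an extension of $p^*\cM$ from $\varphi_\Sigma^{-1}((\C^*)^r)$ to $\wt X_\Gamma$ across the toric boundary. I would resolve this by taking $\cN = j_* p^*\cM$ with $j$ the relevant open immersion (which is a regular holonomic extension because $\cM$ is) and checking that this $\cN$ indeed contains ${}^\Gamma R_U\cM$ as a coherent $\shD_{\wt X_\Gamma / \cS_\Gamma}$-submodule generating $\cN$ as a $\shD_{\wt X_\Gamma}$-module — a point which follows formally from the definition of ${}^\Gamma R_U\cM$ and the fact that $U_\bullet \cM$ generates $\cM$.
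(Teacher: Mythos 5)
Your argument is essentially the paper's own: the paper proves Proposition \ref{prop:usigmarelhol} by exactly the combination you describe --- the characteristic-cycle formula coming from Lemma \ref{lm:BMM} (so that $\Ch_\rel$ has no components over the toric boundary), then the inductive hyperplane-cutting via Proposition \ref{prop:spnotorsioncc} using the flatness provided by Theorem \ref{thm:existadfan}, with Theorem \ref{thm:rbernsteinM} giving the opposite dimension bound --- stated only tersely as ``by a similar argument'' to Proposition \ref{prop:flatrelhol}, whereas you spell it out chart by chart via $\wt{R_U\cM}|_{\wt X_\Gamma}\simeq {}^\Gamma R_U\cM$. One cosmetic remark: the $F$-torsion-freeness in Lemma \ref{lm:BMM} is a hypothesis on $\cN=j_*p^*\cM$ and holds automatically since $\cN$ is a localization along $F$; the flatness from Theorem \ref{thm:existadfan} is what is genuinely needed (and is correctly used by you) for the repeated applications of Proposition \ref{prop:spnotorsioncc}.
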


Since the saturation $\bar U_\bullet\cM$ is good over $V_\bullet\shD_X$ (Theorem \ref{thm:goodsat}), Theorem \ref{thm:CCrelRU} specifically implies
\[\CC_\rel(R_{\bar U}\cM)=\overline{q^{-1}(\CC(\cM))}.\]
By construction, $\overline{q^{-1}(\CC(\cM))}$ is a relative conormal space but not necessarily a relative Lagrangian in general (unless $r=1$). Since 
\[R_{\bar U}\cM={\mu_\Sigma}_*(\wt{R_U\cM}),\]
from Proposition \ref{prop:usigmarelhol}, we see that the direct image functors for relative $\shD$-modules under proper base changes do not necessarily preserve relative holonomicity (cf. \S\ref{subsec:bashchangerelD}).  

\section{Graph embedding construction of Malgrange} \label{sec:gemMal}
Let $\bff=(f_1,f_2,\dots,f_r)$ be a $r$-tuple of regular (or holomorphic) functions on a smooth complex variety (or a complex manifold) $Y$. We write $$j_\bff\colon U_Y\coloneqq Y\setminus (\prod_{i=1}^rf_i=0)\hookrightarrow Y$$ the open embedding.
We consider the graph embedding 
\[\iota_{\bff}\colon Y\hookrightarrow  X\coloneqq Y\times\C^r\quad x\mapsto (x,f_1(x),\dots,f_r(x)).\]
Let $\cM_{Y}$ be a holonomic $\shD_{Y}$-module. We set $\wt\cM=\cM_Y(*D_Y)$ with the divisor $D_Y=(\prod_{i=1}^rf_i=0)$, which is a holonomic $\shD_Y$-module. We assume 
\[\wt\cM= \shD_Y\cdot \cM_0\]
for some $\sO_Y$-coherent submodule $\cM_0\subseteq \wt\cM$.
Following the idea of Malgrange \cite{MalV}, we have a coherent $\shD_Y[\bs]=\shD_Y\otimes_\C\C[\bs]$-submodule
\[\shD_Y[\bs](\bff^\bs\cdot\cM_0)\subseteq \wt\cM[\bs]\cdot\bff^\bs\]
where $\bs=(s_1,s_2,\dots,s_r)$,
\[\bff^\bs=\prod_{i=1}^rf_i^{s_i},\]
and the $\shD_Y[\bs]$-module structure is induced by 
\[\theta\cdot(\bff^\bs\cdot m_0)=\bff^\bs\cdot\theta(m_0)+\sum_{i=1}^r s_i\frac{\theta(f_i)}{f_i}\bff^\bs\cdot m_0\]
for vector fields $\theta$ on $Y$, where $m_0$ is a section of $\cM_0$. Since $\shD_Y[\bs](\bff^\bs\cdot\cM_0)$ is both a $\C[\bs]$-module and a $\shD_Y$-module, it is a coherent relative $\shD$-module over $\C[\bs]$. However,  $\wt\cM[\bs]\cdot\bff^\bs$ is not coherent over $\shD_Y[\bs]$.

We denote by $(t_1,\dots,t_r)$ the coordinates of $\C^r$. The key point is that after identifying $s_i$ with $-t_i\partial_{t_i}$, we have a $\shD_X$-module isomorphism
\[ \iota_{\bff+}(\wt\cM)\simeq \iota_{\bff*}({\wt\cM}[\bs]\bff^\bs)
\]
with the $t_i$-action on $\iota_{\bff*}({\wt\cM}[\bs]\bff^\bs)$ given by 
\[t_i\cdot (b(\bs)\bff^\bs\cdot m_0)=b(s_1,\dots,s_{i-1},s_i+1,s_{i+1},\dots,s_{r})f_i\bff^\bs\cdot m_0.\]
Consequently, $\iota_{\bff*}(\shD_Y[\bs](\bff^\bs\cdot\cM_0))$ is a $\shD_{X,D}$-lattice of $\iota_{\bff+}(\wt\cM)$, where $D$ is the divisor defined by $(t_1\cdots t_r=0)$. 
Since $\iota_{\bff*}(\shD_Y[\bs](\bff^\bs\cdot\cM_0))$ is supported on the graph of $Y$, abusing notations, we also say $\shD_Y[\bs](\bff^\bs\cdot\cM_0)$ is a $\shD_{X,D}$-lattice of $\iota_{\bff+}(\wt\cM)$. Then $\cM_0$ generates a holonomic $\shD_X$-module
\[\cM=\shD_X\cdot\iota_{\bff*}\cM_0\subseteq \iota_{\bff+}\wt\cM\]
and $\shD_Y[\bs](\bff^\bs\cdot\cM_0)$ generates a $R_V\shD_X$-module
\[R_V\shD_X\cdot\iota_{\bff*}(\shD_Y[\bs](\bff^\bs\cdot\cM_0)),\]
where the latter induces a $\Z^r$-filtration on $\cM$ with 
\[U_{\vec0}\cM=\iota_{\bff*}(\shD_Y[\bs](\bff^\bs\cdot\cM_0))\textup{ and } U_{-\vec1}\cM=\iota_{\bff*}(\shD_Y[\bs](\bff^{\bs+\vec1}\cdot\cM_0)).\]
We then apply Theorem \ref{thm:mibfs} and obtain the Sabbah's generalized $b$-function $b(\bs)\in \C[\bs]$ for $\shD_Y[\bs](\bff^\bs\cdot\cM_0)$ such that 
\[b(\bs)\cdot\dfrac{\shD_Y[\bs](\bff^\bs\cdot\cM_0)}{\shD_Y[\bs](\bff^{\bs+\vec1}\cdot\cM_0)}=0\]
with $b(\bs)$ given by a product of polynomials of degree 1. Sabbah's generalized $b$-functions associated to graph embeddings can be further generalized to notions of Bernstein-Sato ideals (see for instance \cite{Budur}).

In the graph embedding case, we can construct the log rescaled family globally (cf. \S\ref{subsec:logtorel}):
\[p: \wt X\coloneqq Y\times \C^r_t\times \C_y^r\to Y\times \C^r_t, \quad (x, t, y)\mapsto (x, e^yt).\] 
We now give a counterexample of Theorem \ref{thm:relchrellog} without flatness when $r=2$:
\begin{example}\label{ex:notrelhollogres}
We take $Y=\C^2$ with coordinates $(x_1,x_2)$ and $\bff=(x_1x_2,x_2)$. We consider the $\shD_{X,D}$-lattice $\bar\cM=\shD_Y[\bs]\bff^\bs$. Its $\shD_{X,D}$-annihilator is 
\[\Ann_{\shD_{X,D}}(\bar\cM)=(x_1\partial_{x_1}+t_1\partial_{t_1},x_2\partial_{x_2}+t_2\partial_{t_2},t_1-x_1x_2,t_2-x_2).\]
Since $\bar\cM$ and $\sM=p^*(\iota_{\bff*}\bar\cM)$
are both acyclic, 
\[\Ch_{\rel}(\sM)=(x_1\xi_{x_1}=-\xi_{y_1},x_2\xi_{x_2}=-\xi_{y_2}, e^{y_1}t_1=x_1x_2, e^{y_2}t_2=x_2).\]
Thus, the fiber of $\Ch_{\rel}(\sM)$ over $(t_1=0,t_2=0)$ satisfies
\[\Ch_{\rel}(\sM)\cap(t_1=0,t_2=0)=(x_1\xi_{x_1}=-\xi_{y_1},\xi_{y_2}=0,x_2=0)\]
and hence the dimension of the fiber is $5>4$. Therefore, $\Ch_\rel(\sM)$ is not relative Lagrangian over $\C^2_t$.
\end{example}
The following theorem is a generalization of \cite[R\'esultat 1]{Mai}. See  \cite[Theorem 3.3]{WuRHA} for the proof of a more generalized result and also \cite[Theorem 4.3.4]{BVWZ2}when $\cM_0=\sO_X$. 

\begin{theorem}\label{thm:mairelhol}
If $\widetilde\cM$ is a holonomic $\shD_Y$-module, then every lattice $\shD_{Y}[\bs](\bff^\bs\cdot \cM_0)$
is relative holonomic over $\C[\bs]$ and 
\[\Ch^\rel(\shD_{Y}[\bs](\bff^\bs\cdot \cM_0))= \Ch(\wt\cM)\times \C^r.\]
\end{theorem}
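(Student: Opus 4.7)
The plan is to prove the relative characteristic cycle formula
\[\Ch^\rel(\shD_Y[\bs](\bff^\bs\cdot\cM_0))=\Ch(\wt\cM)\times\C^r\]
by establishing both inclusions separately; relative holonomicity of $N:=\shD_Y[\bs](\bff^\bs\cdot\cM_0)$ over $\C^r=\Spec\C[\bs]$ follows automatically, since each fiber $\Ch(\wt\cM)$ of the proposed variety is Lagrangian in $T^*Y$ by holonomicity of $\wt\cM$.

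For the upper bound $\Ch^\rel(N)\subseteq\Ch(\wt\cM)\times\C^r$, I would construct a good filtration on $N$ from one on $\wt\cM$. Since the holonomic module $\wt\cM=\shD_Y\cdot\cM_0$ is coherent, $G_k\wt\cM:=F_k\shD_Y\cdot\cM_0$ is a good filtration over $F_\bullet\shD_Y$, with $\supp\widetilde{\gr^G\wt\cM}=\Ch(\wt\cM)$. Placing $\bs$ in filtration degree zero so that $\gr^F\shD_Y[\bs]=\sO_{T^*Y}[\bs]$, set $F_kN:=F_k\shD_Y[\bs]\cdot(\bff^\bs\cdot\cM_0)$. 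An induction on order using the twisted Leibniz computation
\[\theta\cdot(\bff^\bs\cdot m_0)=\bff^\bs\cdot\theta(m_0)+\bigl(\text{terms polynomial in }\bs\text{ of strictly lower }\shD_Y\text{-order}\bigr),\qquad \theta\in F_k\shD_Y,\]
produces a graded surjection $\gr^G\wt\cM\otimes_\C\C[\bs]\twoheadrightarrow\gr^FN$ of $\sO_{T^*Y}[\bs]$-modules, which simultaneously confirms goodness of $F_\bullet N$ and gives the claimed inclusion.

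For the reverse inclusion, I would combine the relative Bernstein inequality (Theorem \ref{thm:rbernsteinM}) with a generic-fiber analysis driven by Sabbah's generalized $b$-functions. The upper bound already forces every non-empty fiber of $\Ch^\rel(N)$ to sit inside the pure Lagrangian $\Ch(\wt\cM)$, while Theorem \ref{thm:rbernsteinM} requires each fiber component to have dimension at least $\dim Y$; hence every non-empty fiber is a union of irreducible components of $\Ch(\wt\cM)$. Apply Theorem \ref{thm:mibfs} to the $\Z^r$-filtration $U_{-\ba}\cM:=\iota_{\bff*}\shD_Y[\bs](\bff^{\bs+\ba}\cdot\cM_0)$ on the holonomic $\shD_X$-module $\cM=\shD_X\cdot\iota_{\bff*}\cM_0\subseteq\iota_{\bff+}\wt\cM$ to obtain a generalized $b$-function $b(\bs)$ satisfying $b(\bs)N\subseteq\shD_Y[\bs](\bff^{\bs+\vec 1}\cdot\cM_0)$. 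Iterating this relation together with its inverse-shift variant, one finds a proper subvariety $Z\subset\C^r$ off which the specialization $N|_{\bs=\alpha}$ is isomorphic via $\bff^{-\alpha}$ to the full twist $\wt\cM\cdot\bff^\alpha$, whose characteristic variety is $\Ch(\wt\cM)$. Closure in $T^*Y\times\C^r$ of the union of these generic fibers then yields $\Ch(\wt\cM)\times\C^r\subseteq\Ch^\rel(N)$.

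The main obstacle lies in the generic-fiber step: recovering the full $\Ch(\wt\cM)$ on generic fibers of $N$ for an arbitrary $\sO_Y$-coherent generator $\cM_0$, not only $\cM_0=\sO_Y$ (the latter being \cite[R\'esultat 1]{Mai}). The delicate bookkeeping is to ensure that the exceptional locus $Z\subset\C^r$ supplied by Theorem \ref{thm:mibfs}, together with the shift-down companion, remains a proper subvariety, so that off $Z$ each section of $\wt\cM$ lifts to $N$ after multiplication by $\bff^\alpha$; this is the content of \cite[Theorem 3.3]{WuRHA} and specializes to \cite[Theorem 4.3.4]{BVWZ2} when $\cM_0=\sO_Y$.
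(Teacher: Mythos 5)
You should note first that the paper itself gives no proof of Theorem \ref{thm:mairelhol}: it is quoted with references to \cite[R\'esultat 1]{Mai}, \cite[Theorem 3.3]{WuRHA} and \cite[Theorem 4.3.4]{BVWZ2}. Judged on its own merits, your sketch has a genuine gap precisely in the step you treat as easy, the inclusion $\Ch^\rel(N)\subseteq \Ch(\wt\cM)\times\C^r$. With $G_k\wt\cM=F_k\shD_Y\cdot\cM_0$ and $F_kN=F_k\shD_Y[\bs]\cdot(\bff^\bs\cM_0)$, the twisted Leibniz rule reads $\theta(\bff^\bs m_0)=\bff^\bs\bigl(\theta(m_0)+\sum_i s_i\tfrac{\theta(f_i)}{f_i}m_0\bigr)$, and the correction terms are \emph{not} ``terms polynomial in $\bs$ of strictly lower order'' inside $\C[\bs]\otimes G_\bullet\wt\cM$: they carry denominators $f_i$, and there is no reason that $\tfrac{1}{f_i}G_{k-1}\wt\cM\subseteq G_k\wt\cM$ for an arbitrary $\sO_Y$-coherent generator $\cM_0$ (each application of a vector field raises the $G$-order by one but may be forced to absorb an extra pole along every $f_i$, so the induction only yields containments with a multiplicative loss, which gives no map of associated gradeds). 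Worse, the proposed surjection $\gr^G\wt\cM\otimes_\C\C[\bs]\twoheadrightarrow\gr^F N$ is not even well posed: it would have to be induced by ``multiply by $\bff^\bs$,'' yet $\bff^\bs\cdot G_k\wt\cM\not\subseteq N$ in general, since $\bff^\bs\theta(m_0)=\theta(\bff^\bs m_0)-\sum_i s_i\tfrac{\theta(f_i)}{f_i}\bff^\bs m_0$ need not lie in $\shD_Y[\bs](\bff^\bs\cM_0)$; so neither well-definedness nor surjectivity is available. This ``upper bound'' is in fact a substantial part of the theorem, not a formality that falls out of a filtration bookkeeping.

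The lower bound also leans on an input of comparable depth that you do not prove: that for generic $\alpha$ the specialization $N\otimes_{\C[\bs]}\C_\alpha$ is the twist $\wt\cM\cdot\bff^\alpha$ \emph{and} that $\Ch(\wt\cM\cdot\bff^\alpha)=\Ch(\wt\cM)$. For a possibly irregular holonomic $\wt\cM$ the latter is not a formal statement (the naive twisted filtration $G_\bullet\otimes\bff^\alpha$ fails to be stable under vector fields for the same pole reason), and establishing it is essentially as hard as the theorem itself; only the final closure argument, via $\Ch(N\otimes\C_\alpha)\subseteq\Ch^\rel(N)\cap\{\bs=\alpha\}$ and Theorem \ref{thm:rbernsteinM}, is routine. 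For comparison with the paper's framework: the natural route here is the one used for Theorem \ref{thm:CClogC} and Theorem \ref{thm:micccl} --- push through the graph embedding, pass to the (log rescaled or deformation) family, and invoke Lemma \ref{lm:BMM} --- but that lemma requires the ambient module to be \emph{regular} holonomic, so it cannot by itself give the statement for general holonomic $\wt\cM$; this is exactly why the paper delegates the proof to \cite[Theorem 3.3]{WuRHA} (and \cite[Theorem 4.3.4]{BVWZ2} when $\cM_0=\sO_Y$). As it stands, your proposal does not constitute a proof, and the part you defer to the literature is not a peripheral bookkeeping issue but the core of the argument.
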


\begin{proof}[Proof of Theorem \ref{thm:constrlattice}]
Since constructibility is local, it is enough to assume that 
\[D=(\prod_{i=1}^r x_i=0)\]
with local coordinates $(x_1,x_2,\dots,x_n)$ and $\wt\cM=\cM(*D)$ satisfies 
\[\cM(*D)=\shD_X\cdot\cM_0\]
for some coherent $\sO_X$-submodule $\cM_0$. Then we take the graph embedding of smooth log pairs
\[\iota_\bff\colon (X,D)\hookrightarrow (Z,D_Z)\]
where $\bff=(x_1,x_2,\dots,x_r)$ and $Z=X\times\C^r$, $D_Z=(t_1\cdots t_r=0)$.
Similar to the non-log case (cf. \cite[Example 1.5.23]{HTT}), we have 
\[\iota^{\log}_{\bff+}(\bar\cM)\simeq \iota_{\bff*}(\bar\cM\otimes_\sO \omega^{\log}_{\iota_\bff}\otimes_{\shD_{X,D}}f^*\shD_{Z,D_Z})\]
and 
\[\iota^{\log}_{\bff+}(\bar\cM)\hookrightarrow \iota_{\bff+}(\wt\cM).\]
Thus, $\iota^{\log}_{\bff+}(\bar\cM)$ is a $\shD_{Z,D_Z}$-lattice of the holonomic $\shD_Z$-module  $\iota_{\bff,+}(\wt\cM)$. But the graph embedding gives us different lattices of  $\iota_{\bff,+}(\wt\cM)$,
\[\shD_X[\bs](\bff\cdot\cM_0(kD))\]
for all $k\in \Z$. The lattices  $\shD_X[\bs](\bff\cdot\cM_0(kD))$ are relative holonomic over $\C[\bs]$ by Theorem \ref{thm:mairelhol}.
Meanwhile, we can compare lattices:
\[\iota^{\log}_{\bff+}(\bar\cM)\subseteq \iota_{\bff*}(\shD_X[\bs](\bff\cdot\cM_0))\otimes_{\sO_Z}\sO_Z(kD_Z)=\iota_{\bff*}(\shD_X[\bs](\bff\cdot\cM_0(kD)))\]
for $k\gg 0$. The above inclusion and Corollary \ref{cor:relholab} together imply that $\iota^{-1}_\bff(\iota^{\log}_{\bff+}(\bar\cM))$ is relative holonomic over $\C[\bs]$. By Lemma \ref{lm:spsmsubvar}, 
\[\iota^{-1}_\bff(\iota^{\log}_{\bff+}(\bar\cM))\otimes_{\C[\bs]}^\bL \C_{\mathbf 0}\simeq \bL i_{\mathbf0}^* (\iota^{-1}_\bff(\iota^{\log}_{\bff+}(\bar\cM)))\]
is a complex of $\shD_X$-modules with holonomic cohomology sheaves, where $i_{\mathbf 0}\colon \{\mathbf 0\}\hookrightarrow \Spec\C[\bs]$ is the closed embedding and $\C_{\mathbf 0}$ is the residue field. By the construction of the log de Rham complex and Proposition \ref{prop:logpushfdr}, we have
\be\label{eq:logdrtodrt}
\iota_{\bff*}(\DR_{X,D}(\bar\cM))\simeq \DR_{Z,D_Z}(\iota^{\log}_{\bff+}(\bar\cM))\simeq \iota_{\bff*}(\DR_X(\iota^{-1}_\bff(\iota^{\log}_{\bff+}(\bar\cM))\otimes_{\C[\bs]}^\bL \C_{\mathbf 0}))
\ee
where the last quasi-isomorphism follows by identifying $s_i$ with $-t_i\partial_{t_i}$. Since 
$$\iota^{-1}_\bff(\iota^{\log}_{\bff+}(\bar\cM))\otimes_{\C[\bs]}^\bL \C_{\mathbf 0}$$ is a complex of $\shD_X$-modules with holonomic cohomology sheaves, $\DR_{X,D}(\bar\cM)$ is constructible by Kashiwara's constructibility theorem (cf. \cite[Theorem 4.6.3]{HTT}).
\end{proof}
\begin{remark}\label{rmk:stratificationlogDR}
(1) From the proof of Theorem \ref{thm:constrlattice}, $\DR_{X,D}(\bar\cM)$ is not necessarily perverse in general, unless $\iota^{-1}_{\bff}(\iota_{\bff+}^{\log}(\bar\cM))$ is flat over a neighborhood of ${\mathbf0}\in \C^r$.\\
(2) The stratification for the constructible complex $\DR_{X,D}(\bar\cM)$ is determined by the stratification of $\Ch(\cM(*D))$ by \eqref{eq:logdrtodrt} and Theorem \ref{thm:mairelhol}.
\end{remark}
\begin{proof}[Proof of Theorem \ref{thm:j*j!DR}]
Part (1) is the analytification of \cite[Theorem 1.1]{WZ} with the same proof. We now prove Part (2).
We keep the notations as in the proof of Theorem \ref{thm:constrlattice}. By picking some $k\gg 0,$
we have an inclusion of lattices
\[\iota^{\log}_{\bff+}(\bar\cM)\subseteq \iota_{\bff*}(\shD_X[\bs](\bff^\bs\cdot\cM_0(kD))).\]
We then consider the short exact sequence of $\shD_{Z,D_Z}$-modules
\[0\to \iota^{\log}_{\bff+}(\bar\cM)\rightarrow\iota_{\bff*}(\shD_X[\bs](\bff\cdot\cM_0(kD)))\rightarrow\cQ\to 0,\]
where $\cQ$ is defined to be the quotient module. Applying Theorem \ref{thm:mibfs} to $\cQ$, there exists $b(\bs)\in\C[\bs]$ as a product of linear polynomials in $\bs$ such that 
\[b(\bs)\cdot \cQ=0.\]
Using substitution, we have 
\[b(\bs+\bl)\cdot\cQ(-lD_Z)=0\]
for $\bl=(l,l,\dots,l)\in \Z^r$ and for each $l$. Chose $l\gg 0$, so that $b(\bs+\bl)$ does not vanishing at ${\mathbf 0}\in \C^r$. Thus, $\cQ(-lD_Z)\otimes \C_{\mathbf 0}=0.$
Considering the above short exact sequence, since 
\[\DR_{Z,D_Z}(\cQ(-lD_Z))\simeq \iota_{\bff*}(\DR_X(\iota^{-1}_{\bff}(\cQ(-lD_Z))\otimes\C_{\mathbf 0}))\]
we hence obtain a quasi-isomorphism 
\be\label{eq:abdcde}
\DR_{Z,D_Z}(\iota^{\log}_{\bff+}(\bar\cM)(-lD_Z))\xrightarrow{q.i.} \DR_{Z,D_Z}(\iota_{\bff*}(\shD_X[\bs](\bff\cdot\cM_0((k-l)D))))
\ee
for some $l\gg k$. By construction, we have
\[\DR_{Z,D_Z}(\iota_{\bff*}(\shD_X[\bs](\bff\cdot\cM_0((k-l)D))))\simeq \iota_{\bff*}(\DR_X(\shD_X[\bs](\bff\cdot\cM_0((k-l)D))\otimes^\bL_{\C[\bs]}\C_{\mathbf 0})).\]
We then apply \cite[Corollary 5.4]{WZ}, and by the quasi-isomorphism \eqref{eq:abdcde} obtain
\[\DR_{Z,D_Z}(\iota^{\log}_{\bff+}(\bar\cM)(-lD_Z))\simeq \iota_{\bff*}(j_!(\DR(\cM|_U))).\]
By the projection formula,
\[\iota^{\log}_{\bff+}(\bar\cM)(-lD_Z)\simeq \iota^{\log}_{\bff+}(\bar\cM(-lD)).\]
Since $\iota_{\bff}$ is a closed embedding, the required quasi-isomorphism then follows from Proposition \ref{prop:logpushfdr}.

We now prove the perversity statement without the regularity assumption. Using the argument in proving \cite[Theorem 5.2 and 5.3]{WZ} as well as the discussion in \cite[\S 3.6]{BVWZ} in dealing with the local analytic case , one obtains that the $\C[\bs]$-modules $\shD_X[\bs](\bff^\bs\cdot\cM_0(kD))$ are flat over a Zariski neighborhood of ${\mathbf0}\in \Spec\C[\bs]$ for all $|k|\gg 0$. Using Sabbah's $b$-functions, we then conclude that $\iota^{-1}_\bff(\iota^{\log}_{\bff+}(\bar\cM(kD)))$ are flat over a Zariski neighborhood of ${\mathbf0}\in \Spec\C[\bs]$ for all $|k|\gg 0$. Consequently, the required perversity follows. 
\end{proof}

\bibliographystyle{amsalpha}
\bibliography{mybib}

\end{document}